\documentclass[11pt,reqno]{amsart}

\usepackage{mathtools}
\usepackage{amssymb}
\usepackage{mathdots}

\usepackage{enumitem}

\usepackage{graphicx}
\usepackage{tikz-cd}

\usepackage{xspace}
\usepackage{xcolor}

\usepackage[
  colorlinks=true,
  linkcolor=blue,
  citecolor=blue,
  urlcolor=blue
]{hyperref}

\theoremstyle{plain}
\newtheorem{theorem}{Theorem}[section]
\newtheorem{proposition}[theorem]{Proposition}
\newtheorem{lemma}[theorem]{Lemma}
\newtheorem{corollary}[theorem]{Corollary}
\newtheorem{example}[theorem]{Example}

\theoremstyle{definition}
\newtheorem{definition}[theorem]{Definition}

\theoremstyle{remark}
\newtheorem{remark}[theorem]{Remark}

\newenvironment{demo}
  {\begin{proof}}
  {\end{proof}}

\newlist{arabiclist}{enumerate}{1}
\setlist[arabiclist]{
  label=\arabic*.,
  ref=\arabic*,
  leftmargin=*,
  itemsep=0.2em,
  topsep=0.2em
}

\newlist{alphlist}{enumerate}{1}
\setlist[alphlist]{
  label=(\alph*),
  ref=\alph*,
  leftmargin=*,
  itemsep=0.2em,
  topsep=0.2em
}

\newcommand{\Z}{\mathbb{Z}}

\newcommand{\Q}{\mathbb{Q}}

\newcommand{\R}{\mathbb{R}}

\newcommand{\norm}{\mathrm{N}}

\renewcommand{\vec}{\mathbf}

\DeclareMathOperator{\tr}{Tr}

\newcommand{\M}{\mathcal{M}}

\renewcommand{\O}{\mathcal{O}}

\renewcommand{\L}{\mathcal{L}}

\renewcommand{\P}{\mathfrak{P}}
\newcommand{\Pp}{\mathfrak{P}}
\newcommand{\p}{\mathfrak p}

\newcommand{\Qq}{\mathfrak Q}

\newcommand*{\ida}{\mathfrak a}
\newcommand*{\idb}{\mathfrak b}

\newcommand*{\idd}{\mathfrak d}
\newcommand*{\idw}{\mathfrak w}
\newcommand*{\idg}{\mathfrak g}

\newcommand*{\s}{\mathfrak s}
\newcommand*{\n}{\mathfrak n}

\renewcommand*{\H}{\mathbb H}

\newcommand*{\lm}{\mathcal M}
\newcommand*{\lk}{\mathcal K}
\newcommand*{\lf}{\mathcal F}

\newcommand*{\hf}{\mathcal M}

\newcommand*{\hk}{\mathcal F}

\newcommand{\GL}{\mathbf{GL}}

\newcommand{\disc}{\mathrm{disc}}

\newcommand{\Hawk}{\textsf{Hawk}}

\newcommand{\bigperp}
  {\mathop{\vcenter{\hbox{\scalebox{2}{$\perp$}}}}\limits}

\newcommand{\midperp}
  {\mathop{\vcenter{\hbox{\scalebox{1.5}{$\perp$}}}}\limits}

\begin{document}

\title[Explicit Jordan decompositions for ideal lattices in CM fields]{Explicit Jordan decompositions for \\ ideal lattices in CM fields}

\author{Guilhem Mureau}
\address{
  Univ Bordeaux, CNRS, Inria, Bordeaux INP, IMB, UMR 5251,
  Talence, France
}
\email{guilhem.mureau@math.u-bordeaux.fr}

\begin{abstract}
Let $E$ be a CM number field, $F$ a totally real subfield (\textit{e.g.} $\Q$), and let $\ida$
be a non-zero fractional ideal of $E$. Endowed with the hermitian trace form
$h_{E/F}(x,y)=\tr_{E/F}(x\overline y)$, the ideal $\ida$ defines an ideal
lattice over $\O_F$. In this paper, we give explicit formulas
for the Jordan decomposition of this lattice at a prime ideal
$\p \subset \O_F$ in terms of the prime ideal factorization of $\ida$ in $E$. Following the approach of Erez, Morales and Perlis, we reduce the computation to the local behavior at the prime ideals
above $\p$. Our
results provide local invariants for the isometry relation between
ideal lattices, with potential applications to the study of structured lattices
arising in arithmetic and cryptography.
\end{abstract}

\maketitle


\section{Introduction}

The question of having closed formulas for Jordan decompositions has already been investigated, in the following context. Let \(K\) be a number field with
ring of integers \(\O_K\), and let $q_K(x)=\tr_{K/\Q}(x^2)$
be the integral trace form on \(K\). Then, the genus of the quadratic lattice
\((\O_K,q_K)\) has been studied in
\cite{erez1987genre,mantillagenus,maurer1973trace}. In particular, when a
rational prime \(p\) is not wildly ramified in \(K\), a Jordan decomposition of
\((\O_K,q_K)\) at \(p\) can be described explicitly in terms of the
ramification data of \(K/\Q\) at \(p\), see for instance
\cite[Theorem~0.1]{mantillagenus}. In these works, the field varies and the
resulting decompositions are used to produce arithmetic invariants of the field
itself.

In this paper we take a different point of view: the ambient field is fixed, and the
lattice varies through the fractional ideals of the field. More precisely, we consider a totally real number field \(K\), a CM extension $E$ of $K$
and a subfield \(F\subseteq K\). The extension \(E/F\) is then endowed with the trace form $h_{E/F} : (x,y) \mapsto \tr_{E/F}(x\overline y)$,\footnote{Although the notation \(h_{E/F}\) may suggest a Hermitian form, it denotes here
a quadratic form on \(F\). The letter \(h\) is used only
to emphasize that the form is built from the expression \(x\overline y\), in
contrast with the usual trace form \((x,y)\mapsto \tr(xy)\).} and every nonzero fractional ideal \(\ida\subset E\) gives rise to the ideal
lattice
\begin{equation}\label{eq:ideal-lattice}
\L_{\ida,F}=(\ida,h_{E/F}).
\end{equation}

This setting is motivated in part by the lattice isomorphism problem for
module lattices over CM fields, which has recently attracted considerable
attention in lattice-based cryptography; see \Hawk{} \cite{ducas2022hawk}. Although the relevant cryptographic instances
typically have higher module rank, ideal lattices already form a rich and
tractable class. They therefore provide a natural testing ground for
understanding which arithmetic information is captured by local quadratic
invariants.

Our main goal is to give closed formulas for the Jordan decomposition of \(\L_{\ida,F}\) at a prime ideal \(\p\subset\O_F\), in terms of the prime ideal factorization
of \(\ida\) in $E$. In other words, we want to read local invariants of the quadratic lattice $\L_{\ida, F}$ directly from the valuations of \(\ida\) at the prime ideals of
\(\O_E\) above \(\p\). When \(F=\Q\), this gives an explicit description of the
rational genera of these ideal lattices. More generally, varying the intermediate
field \(F\subseteq K\) gives a hierarchy of local invariants: larger choices of
\(F\) usually lead to finer decompositions.

The results take different forms depending on whether \(\p\) is dyadic, that is, whether $2$ belongs to $\p$ or not. In the
non-dyadic case, and under technical assumptions on the ideal
\(\ida\), we obtain a particularly simple description of the Jordan blocks in
terms of the valuations of $\ida$ at primes dividing $\p$. When the
extension \(K/F\) is Galois, this yields an explicit local isometry criterion. In the dyadic case, the structure is more
subtle, but a similar description can still be obtained under suitable
assumptions. The following informal statement summarizes our main result; precise
versions are given in Theorem~\ref{thm:jordan-decomp-nondyadic} and
Corollary~\ref{cor:local-criterion-nondyadic} for the non-dyadic case, and in
Theorem~\ref{thm:dyadic-jordan} and
Corollary~\ref{cor:dyadic-p-separated} for the dyadic case.

\medskip
\noindent\textbf{Theorem (informal).}
\textit{Let \(\L_{\ida,F}\) be an ideal lattice, as defined in (\ref{eq:ideal-lattice}). Assuming $K/F$ is Galois, and under explicit assumptions on the prime ideal factorization of $\ida$, the Jordan decomposition of
\(\L_{\ida,F}\) at \(\p\) can be read off from the valuations of \(\ida\) at the
prime ideals \(\P\mid\p\). In particular, the local isometry
class of \(\L_{\ida,F}\) is determined by explicit combinatorial data attached
to the prime ideal factorization of \(\ida\).}

\medskip

These decompositions serve both algorithmic and arithmetic purposes. They
provide effective local invariants for comparing ideal lattices, and may also
serve as a starting point for studying their spinor genera and local densities,
in the spirit of the work of Mantilla-Soler on integral trace forms
\cite{mantilla2017spinor} and of Fiori on local density computations
\cite{fiori2013questions}.

\subsection*{Organization of the paper}

The paper is organized as follows. Section~\ref{sec:lattices} recalls the local theory
of quadratic lattices used throughout the paper. Section~\ref{sec:outline} fixes the
notation and reduces the computation of Jordan decompositions to a
collection of local problems. The non-dyadic and dyadic cases are
treated in Sections~\ref{sec:non-dyadic} and~\ref{sec:dyadic}, respectively.


\section{Local preliminaries}\label{sec:lattices}


Let \(R\) be a Dedekind domain of characteristic zero, with fraction
field \(F\). We briefly recall the terminology and properties of
quadratic \(R\)-lattices used throughout the paper, with particular
emphasis on Jordan decompositions when \(R\) is a valuation ring.
We refer to O'Meara~\cite{o2013introduction} for the general theory.
\begin{definition}[\(R\)-lattices]
Let \(V\) be a finite-dimensional \(F\)-vector space. An \(R\)-lattice
in \(V\) is a finitely generated \(R\)-submodule \(\L\subset V\) such
that \(F\otimes_R \L=V\). 

A quadratic \(R\)-lattice is a pair \((\L,q)\), where $\L$ is an $R$-lattice in $V$ and $q : V \times V \to F$ is a non-degenerate symmetric bilinear form.
\end{definition}
When the form is clear from the context, we simply write \(\L\) instead of $(\L,q)$. For a quadratic \(R\)-lattice \((\L,q)\), its dual is the $R$-lattice given by
\[
\L^\sharp
=
\{x\in \L\otimes_R F\, | \,q(x,\L)\subseteq R\}.
\]
The scale and norm of $(\L,q)$ are respectively the fractional ideals of $R$:
\[
\mathfrak s(\L)
=
\langle q(x,y)\, | \,x,y\in \L\rangle_R,
\qquad \text{and} \qquad
\mathfrak n(\L)
=
\langle q(x,x)\, | \,x\in \L\rangle_R.
\]
For a fractional ideal \(\mathfrak c\) of \(R\), the lattice \(\L\)
is called \(\mathfrak c\)-modular if
\(\L=\mathfrak cL^\sharp\).
When \(\L\) is free, \(\disc(\L)\) denotes the determinant of a Gram matrix, viewed in \(F^\times/R^{\times 2}\).

Isometries between $R$-lattices are understood to be \(R\)-isometries and are denoted by
\(\L\simeq_R \M\). We use the following notation for isometry classes:
\begin{arabiclist}
\item
For any \(a_1,\ldots,a_m\in F^\times\), we denote by \(\langle a_1,\ldots,a_m\rangle\) the quadratic
\(R\)-lattice with Gram matrix
\(\mathrm{diag}(a_1,\ldots,a_m)\);

\item
\(\H\) denotes the hyperbolic plane, with Gram matrix $\begin{psmallmatrix}
0&1\\
1&0
\end{psmallmatrix};$

\item
For \(a,b\in F\) with $ab \neq 1$, \(A(a,b)\) denotes the lattice with
Gram matrix $\begin{psmallmatrix}
a&1\\
1&b
\end{psmallmatrix}.$
\end{arabiclist}
In particular, \(A(0,0)=\H\). For \(\ell\geq 1\), the notation
\(A(a,b)^\ell\), respectively \(\H^\ell\), denotes an orthogonal
sum of \(\ell\) copies of \(A(a,b)\), respectively \(\H\).

The tensor product $\L_1 \otimes_R \L_2$ of two lattices \((\L_1,q_1)\) and \((\L_2,q_2)\) is endowed with the bilinear form determined by
\[
(q_1\otimes q_2)
(x_1\otimes x_2,y_1\otimes y_2)
=
q_1(x_1,y_1)q_2(x_2,y_2).
\]
In particular, we have: 
\[
\langle a_1,\ldots,a_s\rangle
\otimes
\langle b_1,\ldots,b_t\rangle
\simeq_R
\langle a_ib_j\, | \, 1\leq i\leq s,\ 1\leq j\leq t\rangle.
\]

\subsection{Jordan decompositions}\label{subsec:jordan-decomp}

From now on, assume that \(R\) is the valuation ring of a
non-Archimedean local field \(F\), with maximal ideal
\(\p=\pi R\). All the results stated here can be found in O'Meara~\cite[\S 91--93]{o2013introduction}.

\begin{definition}[Jordan decomposition]
Let \(\L\) be a quadratic \(R\)-lattice. A \emph{Jordan decomposition} of
\(\L\) is an orthogonal decomposition
\[
\L=\bigperp_{i=1}^t \L_i
\]
such that \(\L_i\) is \(\p^{s_i}\)-modular for every \(i\), where $s_1<\cdots<s_t.$
The lattices \(\L_i\) are called the \emph{Jordan components} of the
decomposition.
\end{definition}

Every quadratic \(R\)-lattice admits a Jordan decomposition.
Although the decomposition is not unique, the scales and ranks of
its Jordan components are independent of the chosen decomposition.

Our computations will often produce finer orthogonal decompositions
into explicit modular sublattices, which we call \emph{Jordan blocks}.
Blocks with the same scale are grouped together to form a Jordan
component.

When \(F\) is non-dyadic, the description is particularly simple. Every \(\p^s\)-modular quadratic lattice \(\M\) of rank \(m\) is
diagonalizable, and
\begin{equation}
\label{eq:nondyadic-modular-classification}
\M
 \simeq_R
 \langle \pi^s\rangle
 \otimes
 \left\langle
 1,\ldots,1,
 \disc(\M)\pi^{-sm}
 \right\rangle.
\end{equation}
Indeed, \(\disc(\M)\pi^{-sm} \in R^\times \) is a unit, and the isometry class of \(\M\) is determined by its rank, its scale and its discriminant in
\(F^\times/F^{\times 2}\). Consequently,~if
\[
\L=\L_1\perp\cdots\perp \L_t
\qquad\text{and}\qquad
\M=\M_1\perp\cdots\perp \M_t
\]
are Jordan decompositions over a non-dyadic field, with \(\L_i\) and
\(\M_i\) of the same scale \(\p^{s_i}\), then $\L\simeq_R \M$
if and only if, for every \(i\),
\[
\mathrm{rank}(\L_i)=\mathrm{rank}(\M_i)
\qquad\text{and}\qquad
\disc(\L_i) / \disc(\M_i) \in R^{\times 2}.
\]

Over dyadic fields, the scales, ranks and discriminants of the Jordan
components do not suffice to determine the isometry class. We therefore
recall the additional invariants needed below. 

\subsection{Dyadic invariants}
\label{subsec:dyadic-prelim}

Assume furthermore that \(F\) is dyadic, that is, \(2\in\p\). Following \cite[\S 63]{o2013introduction}, we recall the additional invariants that will be used later. For \(y\in F\), its quadratic defect is
\[
\mathfrak d(y)=\bigcap_{x\in F}(y-x^2)R.
\]
It satisfies \(\mathfrak d(y)=0\) if and only if \(y\) is a square, and
\(\mathfrak d(z^2y)=z^2 \mathfrak d(y)\) for every \(z\in F\); see
O'Meara~\cite[\S 63]{o2013introduction}. There exists a unit \(\Delta\in R^\times\) such that $\idd(\Delta)=4R$, and we may write $\Delta=1-4\rho$ for some unit \(\rho\in R^\times\). The following characterization will be useful later on.

\begin{lemma}[{\cite[63:3]{o2013introduction}}]
\label{lem:charac-Delta}
Let \(\Delta\in R^\times\). The following are equivalent:
\begin{arabiclist}
\item \(\idd(\Delta)=4R\);
\item \(F(\sqrt{\Delta})/F\) is the unique unramified quadratic extension of
\(F\).
\end{arabiclist}
\end{lemma}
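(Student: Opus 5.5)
The plan is to prove the two implications separately. Both directions reduce to analysing the discriminant of the quadratic polynomial whose roots generate \(F(\sqrt{\Delta})\). Throughout, \(R\) is the valuation ring of the dyadic field \(F\), and we may assume \(\Delta\) is not a square.

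\emph{(1)\(\Rightarrow\)(2).} Suppose \(\idd(\Delta)=4R\). Then we can write \(\Delta = x^2(1-4\rho)\) with \(x\in R^\times\) and \(\rho\in R\). Replacing \(\Delta\) by \(\Delta x^{-2}\) does not change \(F(\sqrt\Delta)\), so we may assume \(\Delta=1-4\rho\). Set \(\theta=(1+\sqrt{\Delta})/2\). Then \(\theta\) is a root of
\[
X^2-X+\rho ,
\]
which has coefficients in \(R\), so \(\theta\) is integral. The discriminant of this polynomial is \(1-4\rho=\Delta\), a unit. Hence \(R[\theta]\) is étale over \(R\), and \(F(\sqrt\Delta)=F(\theta)\) is an unramified extension of degree at most \(2\).

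It remains to rule out degree \(1\). If \(\Delta\) were a square, then \(\idd(\Delta)=0\neq 4R\), a contradiction. So the extension is quadratic and unramified. By local class field theory, or simply by the uniqueness of unramified extensions of a given degree, it is the unique unramified quadratic extension.

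\emph{(2)\(\Rightarrow\)(1).} Suppose \(F(\sqrt\Delta)/F\) is the unramified quadratic extension. This extension is generated by a root of \(X^2-X+\rho\), where \(\rho\in R\) is chosen so that the reduction of this polynomial is irreducible over the residue field. Such a \(\rho\) exists by Artin–Schreier theory in characteristic \(2\), and it is a unit, since otherwise the reduced polynomial \(X^2-X\) would split. Its discriminant \(\Delta_0=1-4\rho\) then generates the same extension. Hence \(\Delta=\Delta_0 y^2\) for some \(y\in F^\times\), and \(y\) is a unit because both \(\Delta\) and \(\Delta_0\) are units. By the scaling rule \(\idd(z^2y)=z^2\idd(y)\), it suffices to show \(\idd(\Delta_0)=4R\).

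First, \(\idd(\Delta_0)\subseteq 4R\), since \(\Delta_0-1^2=-4\rho\in 4R\).

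For the reverse containment, suppose \(\Delta_0 - x^2\in 4\p\) for some \(x\in R\). Then \(x\) is a unit with \(x\equiv 1\pmod{2}\), as reduction mod \(\p\) forces \(x^2\equiv 1\). Write \(x=1-2t\) with \(t\in R\). Then
\[
\Delta_0-x^2=-4\rho+4t-4t^2 = -4(t^2-t+\rho),
\]
and this lies in \(4\p\) exactly when \(t^2-t+\rho\equiv 0 \pmod{\p}\). That contradicts the irreducibility of \(X^2-X+\bar\rho\) over the residue field. So no element of \(\Delta_0-F^2\) has order larger than \(4\).

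One must also check that \(\idd(\Delta_0)\) cannot be \(0\) or smaller through some non-integral \(x\). This holds because, when \(x\notin R\), the element \(\Delta_0-x^2\) has valuation at most \(0\) and so lies outside \(4\p\). Hence the intersection defining \(\idd(\Delta_0)\) is exactly \(4R\).

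\emph{Main obstacle.} The delicate step is the lower bound \(\idd\supseteq 4R\). It requires parametrising all candidate square roots modulo \(4\p\) as \(x=1-2t\), and the reduction to the Artin–Schreier polynomial is what makes this work. I would handle it by this explicit substitution rather than by appealing to general defect theory.
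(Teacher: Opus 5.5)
Your overall strategy is sound and self-contained; the paper gives no argument of its own and only cites O'Meara 63:3. However, the step you single out as the delicate one is not justified as written. From $\Delta_0-x^2\in 4\p$ you conclude $x\equiv 1\pmod 2$ ``as reduction mod $\p$ forces $x^2\equiv 1$''. Reduction modulo $\p$ only gives $\bar x^2=1$ in the residue field, i.e.\ $x\equiv 1\pmod{\p}$. This is strictly weaker than $x\equiv 1\pmod{2R}$ as soon as $\mathrm{ord}(2)>1$; for instance, $x=1+\pi$ satisfies the first congruence but not the second. That ramified situation is exactly one the paper needs, since the lemma is applied over fields such as $\lk=K_\P$ with $v_\P(2)=\varepsilon$ arbitrary. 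Without $x\equiv 1\pmod 2$, the element $t=(1-x)/2$ need not lie in $R$, so the reduction to the Artin--Schreier polynomial $t^2-t+\rho$ modulo $\p$ is not yet available.

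The conclusion is nevertheless true, and the repair uses the full hypothesis rather than its reduction mod $\p$. Since $\Delta_0\equiv 1\pmod{4R}$, you get $x^2\equiv 1\pmod{4R}$. Write $x=1+y$ with $y\in\p$, so that $x^2-1=y(y+2)$. If $\mathrm{ord}(y)<\mathrm{ord}(2)$, then $\mathrm{ord}(y+2)=\mathrm{ord}(y)$ and $\mathrm{ord}(x^2-1)=2\,\mathrm{ord}(y)<\mathrm{ord}(4)$, a contradiction. Hence $y\in 2R$, and your computation goes through.

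Alternatively, you can skip the parametrisation using the local square theorem, $1+4\p\subseteq F^{\times 2}$ (Hensel applied to $X^2-X-\beta$ with $\beta\in\p$). Any $x$ with $\Delta_0-x^2\in 4\p$ is then a unit with $\Delta_0x^{-2}\in 1+4\p$. That would make $\Delta_0$ a square, contradicting $[F(\sqrt{\Delta_0}):F]=2$. This shows every non-square unit has defect containing $4R$, so only the upper bound $\idd(\Delta_0)\subseteq(\Delta_0-1)R=4R$ uses the shape $1-4\rho$.

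The rest of your proof is fine: the \'etaleness of $R[\theta]$ in (1)$\Rightarrow$(2), the Kummer comparison $\Delta=\Delta_0y^2$, and the treatment of non-integral $x$. In (1)$\Rightarrow$(2) you should only add that the intersection defining $\idd(\Delta)$ is attained by some $x$. This holds because the ideals $(\Delta-x^2)R$ are powers of $\p$ whose intersection $4R$ is nonzero, and it is what justifies writing $\Delta=x^2(1-4\rho)$.
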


Let \((\L,q)\) be a quadratic lattice over \(R\). The classification of
quadratic lattices over \(R\) is more subtle in the dyadic case, essentially
because \(2\) is no longer a unit. In particular, the inclusions
$2\s(\L)\subseteq \n(\L)\subseteq \s(\L)$
are in general strict. We therefore introduce two additional invariants: the
\textit{norm group} and the \textit{weight} of $(\L,q)$.

\begin{definition}[Dyadic invariants]
\begin{arabiclist}
\item The norm group of $(\L,q)$ is
\(\idg(\L) := \{q(x,x)\, | \,x\in \L\} + 2\mathfrak{s}(\L).\)
It is an additive subgroup of $F$. A \textit{norm generator} is an element $g \in \idg(\L)$ such that
$gR = \mathfrak{n}(\L)$.

\item Since $\idg(\L) \supseteq 2\mathfrak{s}(\L)$, there
exists a minimal integer $\ell \in \Z_{\ge 0}$ such that
$\p^\ell \subseteq \idg(\L)$. The \textit{weight} of $(\L,q)$ is then the ideal defined by
\[
\idw(\L) := 2\mathfrak{s}(\L) + \p^{\ell+1}.
\]
\end{arabiclist}
\end{definition}

The norm group and the weight are among the additional invariants
entering the classification of quadratic lattices over dyadic fields;
see O'Meara~\cite[\S 93]{o2013introduction}. We isolate the following rank-two case, which will be used later.

\begin{lemma}
\label{lem:dyadic-rank-two}
Let \(F\) be a dyadic local field with valuation ring \(R\) and maximal ideal
\(\p=\pi R\). Let \((\L,q)\) be a rank-two \(R\)-lattice. Assume that \(\L\) is
\(\p^v\)-modular for some \(v\in\Z\), and that the quadratic space
\((\L\otimes_R F,q)\) is anisotropic. Then
\[
(\L,q)\simeq_R \langle \pi^v\rangle\otimes A(2,2\rho),
\]
where \(\rho\in R^\times\) is such that \(\idd(1-4\rho)=4R\).
\end{lemma}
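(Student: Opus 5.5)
The plan is to reduce to the unimodular case, put the lattice in the shape $A(2a,2b)$, and then identify it through its discriminant using Lemma~\ref{lem:charac-Delta}. One caveat up front: as literally stated, the lemma needs the extra hypothesis $\n(\L)\subseteq 2\s(\L)$. Over $\Z_2$ the lattice $\langle 1,1\rangle$ is unimodular and anisotropic, since $x^2+y^2=0$ has no nonzero solution. Yet it has odd norm, so it cannot be isometric to $A(2,2\rho)$, whose norm is $2R$. I would therefore prove the statement under the assumption $\n(\L)=2\s(\L)$, and check in the later sections that the trace lattices to which the lemma is applied satisfy it.

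\emph{Step 1 (reduction to $v=0$).} Replacing $q$ by $\pi^{-v}q$ turns $\L$ into a unimodular lattice. Anisotropy is unchanged, and so is the relation between norm and scale. Since $\langle\pi^v\rangle\otimes(\L,\pi^{-v}q)=(\L,q)$, it suffices to treat $v=0$.

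\emph{Step 2 (normal form).} Now $\L$ is unimodular of rank two with $\n(\L)=2R$. Since $\s(\L)=R$, there are $x,y\in\L$ with $q(x,y)$ a unit, and we may rescale $y$ so that $q(x,y)=1$. The Gram matrix of $x,y$ has determinant $q(x,x)q(y,y)-1\in R^\times$, because $q(x,x),q(y,y)\in 2R$. Hence $x,y$ span a unimodular sublattice of full rank, which must be all of $\L$. So $\L\simeq A(2a,2b)$ with $a,b\in R$, and $\disc(\L)=4ab-1=-(1-4ab)$.

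\emph{Step 3 (using anisotropy).} A binary space is isotropic exactly when minus its discriminant is a square, so $1-4ab$ is not a square. If $ab\in\p$, then $1-4ab\in 1+4\p$ would be a square by the local square theorem. Hence $a$ and $b$ are units, and $\idd(1-4ab)=4R$. By Lemma~\ref{lem:charac-Delta}, $F(\sqrt{1-4ab})$ is the unramified quadratic extension, as is $F(\sqrt{1-4\rho})$. Therefore $(1-4ab)/(1-4\rho)\in F^{\times2}$, and both elements being units, the ratio lies in $R^{\times2}$.

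\emph{Step 4 (conclusion).} Replacing $x$ by $a^{-1}x$ and $y$ by $ay$ keeps $q(x,y)=1$ and gives $\L\simeq A(2a^{-1},2ab)$. We must now pass to $A(2,2\rho)$. This is the main obstacle: it requires the fact that two unimodular binary lattices of norm $2R$ with discriminants equal modulo $R^{\times2}$ are isometric. I would invoke O'Meara 93:11, which classifies such lattices by $\det$ modulo squares, via their being isometric to $\H$ or $A(2,2\rho)$. Alternatively, one can argue directly with the norm-group and weight invariants of \S93 (93:28), which coincide here since both lattices have $\idg=2R$. The discriminant identification of Step~3 then yields $\L\simeq A(2,2\rho)$.
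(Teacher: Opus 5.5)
Your counterexample is correct, and it shows the statement is false as written. Over $\Z_2$, the lattice $\langle 1,1\rangle$ is unimodular and anisotropic, because $-1\notin\Q_2^{\times 2}$. Yet its norm is $\Z_2$, whereas $A(2,2\rho)$ has norm $2\Z_2$. The paper's proof is exactly your Steps~1 and~4: rescale by $\pi^{-v}$, then quote O'Meara 93:11 for the anisotropic unimodular binary lattice. But that classification only applies to binary unimodular lattices with norm contained in $2R$, as your example shows it must, and the paper neither assumes nor checks this. Your extra hypothesis $\n(\L)=2\s(\L)$ is precisely what is missing; it is equivalent to $\n(\L)\subseteq 2\s(\L)$, since $2\s(\L)\subseteq\n(\L)$ always holds. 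With that hypothesis your argument is correct and follows the paper's route.

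Some smaller points follow.
\begin{itemize}
\item Once you cite 93:11, Steps~2--3 are redundant, since 93:11 already sends the anisotropic case to $A(2,2\rho)$. They are nevertheless correct. To get $\idd(1-4ab)=4R$, you implicitly use the local square theorem a second time to rule out $\idd(1-4ab)\subseteq 4\p$.
\item In Step~4, the substitution $x\mapsto a^{-1}x$, $y\mapsto ay$ gives $A(2a^{-1},2a^2b)$, not $A(2a^{-1},2ab)$. It does not bring you any closer to $A(2,2\rho)$, so you can simply drop it.
\item Your plan to verify the hypothesis in the applications works in the inert case $s=\varepsilon$ of Lemma~\ref{lem:inert-ram-dyadic}. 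There $\n(\L_{0,\lk})=2\O_\lk=2\s(\L_{0,\lk})$.
\item The same check fails in the ramified case $s<\varepsilon$. There $\n(\L_{0,\lk})=2\O_\lk$ strictly contains $2\s(\L_{0,\lk})=\P^{2\varepsilon-s}$. For instance, $\Z_2[i]$ with $\tr(x\overline{y})$ is $\langle 2\rangle\otimes\langle 1,1\rangle$, which is your counterexample rescaled. So that case of the paper cannot rest on this lemma.
\end{itemize}
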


\begin{proof}
The rescaled lattice \((\L,\pi^{-v}q)\) is unimodular by
\cite[§82J]{o2013introduction}. Since its ambient quadratic space is
anisotropic, \cite[93:11]{o2013introduction} gives $(\L,\pi^{-v}q)\simeq_R A(2,2\rho)$
for some \(\rho\in R^\times\) satisfying \(\idd(1-4\rho)=4R\). Rescaling by \(\pi^v\) gives the result.
\end{proof}

\subsection{Integral bases in quadratic local extensions}

Let \(\mathcal E/\mathcal K\) be a quadratic extension of
non-Archimedean local fields, with valuation rings
\(\O_{\mathcal E}\) and \(\O_{\mathcal K}\), respectively. Let
\(\P\) be the maximal ideal of \(\O_{\mathcal K}\), and let
\(\Pi\) be a uniformizer of \(\mathcal K\). Choose
\(a\in\O_{\mathcal K}\) such that
\[
\mathcal E=\mathcal K(\sqrt{-a})
\qquad\text{and}\qquad
v_{\P}(a)\in\{0,1\}.
\]
We will repeatedly use the following explicit description of
\(\O_{\mathcal E}\), due to~Fröhlich.

\begin{lemma}[{\cite[Theorem 3.1]{frohlich1960discriminants}}]\label{lem}
Let \(\delta\) denote the relative
discriminant ideal.
\begin{arabiclist}
\item If $2 \notin \Pp$, then $\delta = (a)$ and $\O_{\mathcal{E}} = \O_{\lk}[\sqrt{-a}]$.
\item Suppose that $2 \in \Pp$. 
\begin{alphlist}
\item If $a \in \Pp$, then $\delta = (4a)$ and $\O_{\mathcal{E}} = \O_{\lk}[\sqrt{-a}]$;
\item If $a \notin \Pp$, let $s \geq 0$ be the greatest integer such that
$$4 \in \Pp^{2s} \quad \text{and} \quad  \exists \, \beta_\Pp \in \O_{\lk}^\times : \beta_\Pp^2 \equiv -a \pmod{\Pp^{2s}}.$$
Then, $\delta
=
\bigl((2\Pi^{-s})^2a\bigr),$ and $\O_{\mathcal{E}} = \O_{\lk}\Big[\frac{\beta_\Pp + \sqrt{-a}}{\Pi^s}\Big].$
\end{alphlist}
\end{arabiclist}
\end{lemma}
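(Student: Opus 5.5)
The statement is Fröhlich's classical computation of the ring of integers and relative discriminant of a quadratic extension of local fields. I would prove it directly: exhibit an integral element generating a candidate order $\O_{\lk}[\theta]$, compute its discriminant, and show it is maximal.

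\textbf{Step 1: the candidate order in each case.} In every case $\mathcal E=\lk(\sqrt{-a})$. The element $\theta$ is either $\sqrt{-a}$ or $\theta=(\beta_\Pp+\sqrt{-a})/\Pi^s$, and it is integral. Indeed, its minimal polynomial is
\[
X^2-\frac{2\beta_\Pp}{\Pi^s}X+\frac{\beta_\Pp^2+a}{\Pi^{2s}},
\]
whose coefficients lie in $\O_{\lk}$ because $4\in\Pp^{2s}$ gives $2\Pi^{-s}\in\O_{\lk}$, and $\beta_\Pp^2\equiv -a \pmod{\Pp^{2s}}$. The discriminant of $\O_{\lk}[\theta]$ is the discriminant of this polynomial:
\begin{itemize}
\item $-4a$ when $\theta=\sqrt{-a}$;
\item $(2\Pi^{-s})^2\beta_\Pp^2-4(\beta_\Pp^2+a)\Pi^{-2s}=-4a\Pi^{-2s}$ in case 2(b).
\end{itemize}
These agree up to units with the claimed $\delta$: $(a)$ in case 1 since $4$ is a unit, $(4a)$ in case 2(a), and $((2\Pi^{-s})^2a)$ in case 2(b). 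So it remains to show that each order is maximal.

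\textbf{Step 2: maximality in the easy cases.}
\begin{itemize}
\item If $v_\Pp(a)=1$ (cases 1 and 2(a) with $a\in\Pp$), then $X^2+a$ is Eisenstein. Hence $\mathcal E/\lk$ is totally ramified, $\sqrt{-a}$ is a uniformizer, and $\O_{\mathcal E}=\O_{\lk}[\sqrt{-a}]$.
\item If $2\notin\Pp$ and $a$ is a unit, the discriminant $-4a$ is a unit. A unit discriminant forces maximality, because $\disc(\O_{\lk}[\theta])=[\O_{\mathcal E}:\O_{\lk}[\theta]]^2\,\delta$.
\end{itemize}

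\textbf{Step 3: maximality in case 2(b), the main obstacle.} Here $a$ is a unit and $2\in\Pp$. I would argue via the index formula. Any $x\in\O_{\mathcal E}$ can be written $x=u+v\sqrt{-a}$, and after subtracting an element of $\O_{\lk}[\theta]$ we may write $x=u'+w\theta$. It suffices to show $w\in\O_{\lk}$; then $u'$ is integral as well. Equivalently, I must show there is no integral element of the form $(c+\sqrt{-a})/\Pi^{t}$ with $t>s$.

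Integrality of such an element means its trace $2c/\Pi^t$ and its norm $(c^2+a)/\Pi^{2t}$ are integral, where $c$ must be a unit. These conditions say $4\in\Pp^{2t}$ and $c^2\equiv -a\pmod{\Pp^{2t}}$, which contradicts the maximality of $s$. I also have to handle elements whose $\sqrt{-a}$-coefficient has negative valuation with a non-unit numerator. A valuation argument reduces these to the form above by scaling by units of $\O_{\lk}$.

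The delicate point, and the step I expect to be hardest, is the bookkeeping: checking that the trace and norm conditions are exactly the defining conditions of $s$. Once this is done, the discriminant formula for $\delta$ follows from Step 1.
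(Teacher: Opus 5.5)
The paper gives no proof of this lemma; it simply quotes Fr\"ohlich's Theorem~3.1. Your argument is therefore an independent, self-contained route, and it is correct.

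\textbf{Outline of your proof.}
\begin{itemize}
\item You exhibit an explicit integral generator $\theta$ and read off $\disc(\O_{\lk}[\theta])$ from its minimal polynomial: $-4a$, respectively $-4a\Pi^{-2s}$.
\item When $v_{\Pp}(a)=1$, maximality follows from the Eisenstein criterion.
\item When $2\notin\Pp$ and $a$ is a unit, the discriminant is a unit, and the index formula gives maximality.
\item In case 2(b), an integral $u+y\sqrt{-a}$ with $v_{\Pp}(y)=-t<-s$ can be divided by the unit $y\Pi^{t}$. This yields an integral element $(c+\sqrt{-a})/\Pi^{t}$. Its trace and norm conditions are exactly $4\in\Pp^{2t}$ and $c^2\equiv -a \pmod{\Pp^{2t}}$, which contradicts the maximality of $s$.
\end{itemize}

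\textbf{The step you flagged as hardest is immediate.} Since $t>s\ge 0$ and $a$ is a unit, the condition $v_{\Pp}(c^2+a)\ge 2t>0$ forces $v_{\Pp}(c)=0$. Indeed, $v_{\Pp}(c)<0$ gives a negative valuation, and $v_{\Pp}(c)>0$ gives valuation $0$. So $c$ is a unit, and then $2c\Pi^{-t}\in\O_{\lk}$ is equivalent to $t\le v_{\Pp}(2)$.

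\textbf{Minor phrasing.} The phrase ``after subtracting an element of $\O_{\lk}[\theta]$'' really just means changing the $\lk$-basis from $\{1,\sqrt{-a}\}$ to $\{1,\theta\}$. This is harmless.

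\textbf{Comparison with the citation.} Your route makes transparent why $s$ is defined by precisely those two conditions: they are the trace and norm integrality conditions for elements $(c+\sqrt{-a})/\Pi^{t}$, and they are what prevents enlarging the order past $t=s$. It also makes the subsequent remark on extreme cases visible directly. For $s=0$ the generator is $\beta_\Pp+\sqrt{-a}$, so $\O_{\lk}[\theta]=\O_{\lk}[\sqrt{-a}]$. For $s=v_{\Pp}(2)$ the discriminant $(4\Pi^{-2s}a)$ is the unit ideal. The citation is shorter and places the statement inside Fr\"ohlich's general theory of discriminants. For the quadratic case needed here, though, your elementary argument is complete.
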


\begin{remark}\label{rmk}
The extreme cases \(s=0\) and \(s=v_{\P}(2)\) correspond respectively
to \(\O_{\mathcal E}=\O_{\mathcal K}[\sqrt{-a}]\) and to an
unramified extension.
\end{remark}


\section{Outline of the method}\label{sec:outline}

The overall strategy is directly adapted from
\cite{erez1987genre,mantillagenus}. We recall it here, after fixing the notation used throughout this paper. 

\subsection{Notation and local conventions}\label{sec:notation-trace}

Let \(K\) be a totally real number field, and choose
\(a\in\O_K\) satisfying:
\begin{arabiclist}
\item \(a\) is totally positive, that is,
\(\sigma(a)>0\) for every embedding \(\sigma\colon K\hookrightarrow\R\);
\item the ideal \(a\O_K\) is square-free.
\end{arabiclist}
Since \(a\) is totally positive, the polynomial \(X^2+a\) is
irreducible over \(K\). We set $E=K[X]/(X^2+a)$
and identify \(\sqrt{-a}\) with the image of \(X\) in \(E\). Then
\(E/K\) is a quadratic CM extension. Its non-trivial
\(K\)-automorphism is complex conjugation, given by $\overline{\phantom{x}}\colon
x+y\sqrt{-a}\longmapsto x-y\sqrt{-a}.$

Let \(F\subseteq K\) be a fixed subfield (\textit{e.g.,} \(F = \Q\)). The map
\[
\begin{aligned}
h_{E/F}\colon E\times E&\longrightarrow F,\\
(x,y)&\longmapsto \tr_{E/F}(x\overline y)
\end{aligned}
\]
is a non-degenerate symmetric \(F\)-bilinear form, and hence turns
\(E\) into a quadratic space over \(F\). Consequently, every non-zero
fractional \(\O_E\)-ideal \(\ida\) defines the quadratic
\(\O_F\)-lattice
\begin{equation}
\label{eq:ideal-lattice}
\L_{\ida,F}=(\ida,h_{E/F}).
\end{equation}

Finally, fix a non-zero prime ideal \(\p\subset\O_F\), and assume
that \(K/F\) is tamely ramified at \(\p\). We write $\p\O_K=\prod_{i=1}^g\P_i^{e_i}$
for its prime ideal factorization in \(\O_K\). Equivalently, each
ramification index \(e_i\) satisfies $e_i \notin \p$.
\\ \ \\
\textit{Notation.} Throughout the paper, global fields are denoted by italic letters,
such as \(K\), \(E\), and \(F\), whereas calligraphic letters, such as
\(\lk\) and \(\lf\), are used for local fields. We use \(\L\) or $\M$ for
lattices and Fraktur letters for ideals.

\subsection{Decomposition over the primes above \(\p\)}
\label{subsec:descent}

For each prime ideal \(\P\mid\p\), let \(F_\p\) and \(K_\P\)
denote the corresponding completions. The standard decomposition of completions gives
\begin{equation}
\label{eq:prod-completion}
K\otimes_F F_{\p}
 \simeq
\prod_{\P\mid\p}K_{\P}
\qquad \text{and} \qquad
\O_K\otimes_{\O_F}\O_{F_{\p}}
 \simeq
\prod_{\P\mid\p}\O_{K_{\P}},
\end{equation}
see, for instance,
\cite[Chapter~II, Corollary~8.4]{neukirch2013algebraic}. Moreover, set $E_{\p}=E\otimes_F F_{\p}$ and $E_{\P}=E\otimes_K K_{\P}$.
Then \eqref{eq:prod-completion} induces an isomorphism
\[
E_{\p}\simeq\prod_{\P\mid\p}E_{\P}.
\]
Complex conjugation acts componentwise under this identification, and
the trace form on \(E_{\p}\) is the orthogonal sum of the trace forms
on the factors \(E_{\P}\). At the level of fractional ideals, we prove the following lemma, generalizing \cite[Equation (1.5)]{erez1987genre}.

\begin{lemma}
\label{lemma:orthoplit}
Let \(\ida\) be a non-zero fractional ideal of \(E\). There is an
orthogonal decomposition of quadratic \(\O_{F_{\p}}\)-lattices
\[
\left(
\ida\otimes_{\O_F}\O_{F_{\p}},
h_{E_{\p}/F_{\p}}
\right)
\simeq_{\O_{F_{\p}}}
\bigperp_{\P\mid\p}
\left(
\ida\otimes_{\O_K}\O_{K_{\P}},
h_{E_{\P}/F_{\p}}
\right).
\]
\end{lemma}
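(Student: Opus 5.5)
The plan is to build the decomposition from the algebra isomorphism \eqref{eq:prod-completion} and its extension $E_{\p}\simeq\prod_{\P\mid\p}E_{\P}$. We then check three things: the image of $\ida\otimes_{\O_F}\O_{F_\p}$ is the product of the local ideals, the trace form splits as an orthogonal sum, and the bilinear forms agree on each factor.

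\emph{Step 1 (modules).} Since $\O_{F_\p}$ is flat over $\O_F$,
\[
\ida\otimes_{\O_F}\O_{F_\p}\simeq \ida\otimes_{\O_K}(\O_K\otimes_{\O_F}\O_{F_\p}).
\]
By \eqref{eq:prod-completion} this is isomorphic to
\[
\ida\otimes_{\O_K}\prod_{\P\mid\p}\O_{K_\P}\simeq\prod_{\P\mid\p}\ida\otimes_{\O_K}\O_{K_\P}.
\]
The tensor product commutes with finite products, and $\ida$ is a finitely generated projective $\O_K$-module, so nothing is lost in either identification. We also need these isomorphisms to be compatible with the inclusion into $E_\p\simeq\prod E_\P$. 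This holds because all the maps are obtained by tensoring the inclusion $\ida\subset E$ with the same maps as in \eqref{eq:prod-completion}, and $\ida\otimes_{\O_K}\O_{K_\P}\hookrightarrow E\otimes_K K_\P=E_\P$ is injective by flatness. Thus $\ida\otimes_{\O_F}\O_{F_\p}$ is identified with the sublattice $\prod_{\P}\bigl(\ida\otimes_{\O_K}\O_{K_\P}\bigr)$ of $\prod_\P E_\P$.

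\emph{Step 2 (forms).} The form $h_{E_\p/F_\p}$ is the $F_\p$-bilinear extension of $h_{E/F}$, namely $\tr_{E_\p/F_\p}(x\overline y)$, where conjugation is extended by $\overline{\phantom{x}}\otimes\mathrm{id}$. Conjugation is $K$-linear, so under $E_\p\simeq\prod E_\P$ it acts componentwise as the conjugation $\overline{\phantom{x}}\otimes_K\mathrm{id}_{K_\P}$ on each $E_\P$.

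For the trace, use the transitivity $\tr_{E/F}=\tr_{K/F}\circ\tr_{E/K}$, which persists after base change to $F_\p$. We also use the standard fact that for a finite étale algebra decomposing as $A\otimes_F F_\p\simeq\prod A_i$, the trace is the sum of the traces of the factors. Applied to $K\otimes_F F_\p\simeq\prod K_\P$, this gives $\tr_{K_\p/F_\p}=\sum_\P \tr_{K_\P/F_\p}$. Moreover, $\tr_{E_\P/K_\P}$ is the base change of $\tr_{E/K}$. Combining these,
\[
h_{E_\p/F_\p}\bigl((x_\P),(y_\P)\bigr)=\sum_{\P\mid\p}\tr_{E_\P/F_\p}\bigl(x_\P\overline{y_\P}\bigr).
\]
In particular, the factors are mutually orthogonal, because their products are zero in $\prod E_\P$. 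On each factor the form is exactly $h_{E_\P/F_\p}$.

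\emph{Main obstacle.} The only delicate point is bookkeeping in Step 1: the decomposition must hold at the level of lattices and not merely of vector spaces, and it must be compatible with the ambient identification. Flatness together with the integral half of \eqref{eq:prod-completion} handles this. Alternatively, we can argue via idempotents. The orthogonal idempotents $e_\P\in \O_K\otimes\O_{F_\p}$ lie in the integral ring, and $\ida\otimes\O_{F_\p}$ is a module over it, so it equals $\bigoplus_\P e_\P(\ida\otimes\O_{F_\p})$. Each summand $e_\P(\ida\otimes\O_{F_\p})$ is $\ida\otimes_{\O_K}\O_{K_\P}$. This yields the stated orthogonal decomposition.
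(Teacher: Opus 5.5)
Your proposal is correct and follows essentially the same route as the paper: associativity of the tensor product together with \eqref{eq:prod-completion} splits $\ida\otimes_{\O_F}\O_{F_\p}$ into its $\P$-components, and then the componentwise action of conjugation and the additivity of the trace give both the orthogonality and the identification of each restricted form with $h_{E_\P/F_\p}$. Your extra bookkeeping (compatibility with the embedding into $E_\p$, or the idempotent argument) supplies care the paper leaves implicit; note only that the first isomorphism in your Step 1 is plain associativity and does not need flatness, which is used only for the injectivity into $E_\p$.
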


\begin{proof}
By associativity of tensor products and \eqref{eq:prod-completion},
\[
\begin{aligned}
\ida\otimes_{\O_F}\O_{F_\p}
&\simeq
\ida\otimes_{\O_K}
\left(\O_K\otimes_{\O_F}\O_{F_\p}\right)\\
&\simeq
\bigoplus_{\P\mid\p}
\left(\ida\otimes_{\O_K}\O_{K_\P}\right).
\end{aligned}
\]
Under the decomposition $E_\p\simeq\prod_{\P\mid\p}E_\P,$
complex conjugation acts componentwise. Hence, for
\(x=(x_\P)_{\P\mid \p}\) and \(y=(y_\P)_{\P \mid \p}\), one has
\[
h_{E_\p/F_\p}(x,y)
=
\sum_{\P\mid\p}
\tr_{E_\P/F_\p}(x_\P\overline{y_\P}).
\]
The summands are therefore orthogonal, and the restriction of
the form to the \(\P\)-component is exactly \(h_{E_\P/F_\p}\).
\end{proof}

\subsection{Descent through intermediate extensions}\label{subsec:descent-through}

By Lemma~\ref{lemma:orthoplit}, it is enough to compute a splitting of
\(\ida\otimes_{\O_K}\O_{K_{\P}}\) for each prime ideal
\(\P\mid\p\). Fix such a prime \(\P\), and set $\lf=F_{\p}$ and $\lk=K_{\P}.$ Let \(\lm\) be the maximal unramified subextension of \(\lk/\lf\).
We then have the tower:
\[
\begin{tikzcd}[row sep=1.6em]
E\otimes_K\lk
  \arrow[d, no head, "2"'] \\
\lk
  \arrow[d, no head, "e"'] \\
\lm
  \arrow[d, no head, "f"'] \\
\lf
\end{tikzcd}
\qquad 
\left\{
\begin{array}{@{}l@{}}
\lk/\lm \text{ is totally tamely ramified},\\[0.2cm]
\lm/\lf \text{ is unramified},\\[0.2cm]
ef=[\lk:\lf].
\end{array}
\right.
\]
Here \(e=e(\P\mid\p)\) is the ramification index and $f=[\O_{\lk}/\P\O_{\lk}:\O_{\lf}/\p\O_{\lf}]$
is the residue degree. By transitivity of the trace,
\[
\tr_{E\otimes_K\lk/\lf}
=
\tr_{\lm/\lf}
\circ
\tr_{\lk/\lm}
\circ
\tr_{E\otimes_K\lk/\lk}.
\]
Accordingly, the computation can be split in three stages: first over
\(\lk\), then through the totally ramified extension \(\lk/\lm\), and
finally through the unramified extension \(\lm/\lf\). We treat separately the non-dyadic case \(2\notin\P\) and the dyadic
case \(2\in\P\). Sections~\ref{sec:cm}--\ref{sec:gluing-non-dyadic}
carry out the three steps in the non-dyadic setting, while
Section~\ref{sec:dyadic} deals with the dyadic case.


\section{The non-dyadic case}\label{sec:non-dyadic}

We carry on the notation introduced in Subsection \ref{sec:notation-trace}.

\subsection{Over the CM extension}\label{sec:cm} 

Let us fix a prime ideal $\Pp \subset \O_K$ above $\p$. Since $E/K$ is a quadratic extension, there are only three possibilities for the prime ideal factorization of $\Pp\O_E$.
\begin{arabiclist}
\item Split: $\Pp\O_E = \Qq_1 \Qq_2$ with $\Qq_1 \neq \Qq_2$ and $\Qq_2 = \overline{\Qq_1};$ 
\item Inert: $\Pp\O_E = \Qq$ is a prime ideal;
\item Ramified: $\Pp\O_E = \Qq^2$ is the square of a prime ideal. 
\end{arabiclist}

As in Subsection \ref{subsec:descent-through}, we set $\lk = K_\P$ and $E_\P = E \otimes_{K} \lk$. The latter is a two-dimensional vector space over $\lk$ and a quadratic space, endowed with the form 
$$ 
(x,y) \mapsto  \tr_{E_\P/\lk}(x\overline{y}),
$$
where $\tr_{E_\P/\lk}$ is defined by $\tr_{E_\P/\lk} = z + \overline{z}$ for any $z \in E_\P$. In this section our goal is to compute  splittings of $\O_{\lk}$-lattices 
\begin{equation}\label{eq:l_a}
\L_{\ida,\lk} := \Big(\ida \otimes_{\O_K} \O_{\lk} , \tr_{E_\P/\lk}(x\overline{y})\Big),
\end{equation}
where $\ida$ is any ideal of $\O_E$. Note that such lattices have rank-two over $\O_{\lk}$, and that it is enough to consider $\ida$ which is a product of prime ideals dividing $\P$. 

Recall that $E_\P := E \otimes_{K} \lk$ is isomorphic to $\prod_{\Qq | \Pp} E_{\Qq},$ where the product ranges over the prime ideals $\Qq$ of $\O_E$ dividing $\P$, and that this isomorphism preserves integral elements, \textit{i.e.},
\begin{equation}\label{eq:phi}
\phi : \O_{E} \otimes_{\O_{K}} \O_{\lk} \xrightarrow{\sim} \prod_{\Qq | \Pp} \O_{E_\Qq}
\end{equation} 
is an isomorphism of $\O_{K}$-modules, where $\O_{E_{\Qq}}$ is the valuation ring of $E_{\Qq}$. 

\subsubsection{Split case} Assume $\P$ splits, that is, $\P \O_E = \Qq_1 \Qq_2$ for distinct prime ideals $\Qq_1, \Qq_2$ of $\O_E$. Note that $E_{\Qq_1}, E_{\Qq_2}$ are degree one extensions of $\lk$, and are thus isomorphic to $\lk$. Then, Equation (\ref{eq:phi}) now reads 
\[\O_E \otimes_{\O_K} \O_{\lk} \simeq_{\O_K} \O_{\lk} \times \O_{\lk}.\] 
Moreover, complex conjugation $\overline{\, \cdot \,}$ on $\O_{E} \otimes_{\O_{K}} \O_{\lk}$ corresponds to the involution on $\O_{\lk} \times \O_{\lk}$ which swaps coordinates. 

Recall that is enough to consider ideal lattices $\L_{\ida, \lk}$ where $\ida = \Qq_1^{v_1} \Qq_2^{v_2}$. For simplicity of notation, we will denote the latter by $\L_{v_1,v_2, \lk}$. 

\begin{lemma}\label{lem:split}
Let $J_i = \Qq_i \otimes \O_{\lk}$ for $i \in \{1,2\}$. Up to exchanging $\Qq_1$ and $\Qq_2$, the isomorphism $\phi$ from (\ref{eq:phi}) maps $J_1$ on $\Pp \times \O_{\lk}$, and $J_2$ on $\O_{\lk} \times \Pp$. These are actually isometries of $\O_{\lk}$-lattices if one endows $\lk \times \lk$ with the hyperbolic form  
$$
H : ((x_1,y_1)\, ;(x_2,y_2)) \longmapsto x_1y_2 + x_2y_1.
$$
Moreover, for $v_1, v_2 \in \Z$, there is an isometry of lattices
$$
\L_{v_1,v_2,\lk} \simeq_{\O_{\lk}} (\P^{v_1} \times \P^{v_2}, H).
$$
\end{lemma}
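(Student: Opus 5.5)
The plan is to make the isomorphism \(\phi\) from (\ref{eq:phi}) explicit in the split case and then simply transport ideals and the form along it. Since \(\P\) splits, \(E_{\Qq_1}\simeq\lk\simeq E_{\Qq_2}\), and \(\phi\) is induced by the two \(\lk\)-algebra maps \(E_\P\to\lk\), namely \(\sqrt{-a}\mapsto \pm\beta\), where \(\beta\in\O_{\lk}\) satisfies \(\beta^2=-a\). Such a \(\beta\) exists because \(\P\) splits, so \(-a\) is a square in \(\lk\). Thus \(\phi(x)=(\iota_1(x),\iota_2(x))\) with \(\iota_2=\iota_1\circ\overline{\phantom{x}}\). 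This shows directly that conjugation corresponds to swapping coordinates. The ring \(\O_{E_\P}=\O_E\otimes\O_{\lk}\) is a semilocal ring whose maximal ideals are exactly \(J_1,J_2\). Under \(\phi\) these correspond to the two maximal ideals \(\P\times\O_{\lk}\) and \(\O_{\lk}\times\P\) of \(\O_{\lk}\times\O_{\lk}\). Maximal ideals correspond bijectively, so after relabelling \(\Qq_1,\Qq_2\) we may assume \(\phi(J_1)=\P\times\O_{\lk}\) and \(\phi(J_2)=\O_{\lk}\times\P\).

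Next I check that \(\phi\) transports the trace form to \(H\). For \(z\in E_\P\) with \(\phi(z)=(z_1,z_2)\), we have \(\phi(\overline z)=(z_2,z_1)\). Hence \(\tr_{E_\P/\lk}(z)=z+\overline z\) corresponds to \(z_1+z_2\), identifying \(\lk\) diagonally; equivalently, the trace of the étale algebra \(\lk\times\lk\) is the sum of the coordinates. Then for \(x\mapsto(x_1,y_1)\) and \(y\mapsto(x_2,y_2)\),
\[
\tr_{E_\P/\lk}(x\overline y)=x_1y_2+y_1x_2=H\bigl((x_1,y_1);(x_2,y_2)\bigr).
\]
So \(\phi\) is an isometry \((E_\P,\tr(x\overline y))\to(\lk\times\lk,H)\), and its restrictions to \(J_1\) and \(J_2\) are isometries onto their images.

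Finally, \(\phi\) is a ring isomorphism and extends to the fraction rings. The lattice \(\ida\otimes\O_{\lk}\) with \(\ida=\Qq_1^{v_1}\Qq_2^{v_2}\) equals \(J_1^{v_1}J_2^{v_2}\), which is valid also for negative exponents since these are invertible ideals and localization is flat. Its image is therefore
\[
(\P\times\O_{\lk})^{v_1}(\O_{\lk}\times\P)^{v_2}=\P^{v_1}\times\P^{v_2},
\]
because products of ideals in \(\lk\times\lk\) are computed componentwise. This gives the stated isometry \(\L_{v_1,v_2,\lk}\simeq_{\O_{\lk}}(\P^{v_1}\times\P^{v_2},H)\).

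The only step requiring care is the identification of \(\phi\) with the pair of embeddings and the swap of conjugation, which the paper already asserts. The rest of the argument is formal.
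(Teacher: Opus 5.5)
Your proposal is correct and follows the same route as the paper. You match the two primes above $\P$ with $\P\times\O_{\lk}$ and $\O_{\lk}\times\P$ up to ordering, use multiplicativity of $\phi$ to obtain $\P^{v_1}\times\P^{v_2}$, and compute $\tr_{E_\P/\lk}(x\overline{y})=x_1y_2+x_2y_1$ from the fact that conjugation swaps coordinates. Your realization of $\phi$ through the two embeddings $\sqrt{-a}\mapsto\pm\beta$ only adds a justification of that coordinate swap, which the paper asserts before the lemma. The flatness you invoke is that of the completion $\O_{\lk}$ over $\O_K$ rather than of a localization, but this changes nothing.
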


\begin{demo}
Under $\O_E\otimes_{\O_K}\O_{\mathcal K}
\simeq
\O_{\mathcal K}\times\O_{\mathcal K},$
the two prime ideals above \(\P\) correspond, up to ordering, to
\(\P\times\O_{\mathcal K}\) and
\(\O_{\mathcal K}\times\P\). Since the isomorphism is multiplicative,
\[
\phi(\Qq_1^{v_1}\Qq_2^{v_2} \otimes \O_{\lk}) = \phi(J_1^{v_1} J_2^{v_2}) = \phi(J_1)^{v_1} \phi(J_2)^{v_2} = \P^{v_1} \times \P^{v_2}.
\]
For $(x_1, y_1), (x_2,y_2) \in \lk \times \lk$, we compute
\begin{align*}
\tr_{(\lk \times \lk)/\lk}\left((x_1,y_1) \cdot \overline{(x_2,y_2)}\right) = \tr_{(\lk \times \lk)/\lk} (x_1y_2, x_2y_1 ) 
= x_1y_2 + x_2y_1.
\end{align*}
Hence, the corresponding form is the hyperbolic form on $\lk \times \lk$. 
\end{demo}

\subsubsection{Inert or ramified case} 
Assume now that $\Pp$ is either inert or ramified, that is, $\P\O_E = \Qq$ or $\Qq^2$ where $\Qq$ is a prime ideal. In particular, (\ref{eq:phi}) now reads 
\[
\O_E \otimes_{\O_K} \O_{\lk} \simeq \O_{E_\Qq},
\] 
where $\O_{E_\Qq}$ is the valuation ring of $E_{\Qq}$. The aim of this paragraph is to compute a splitting for the $\O_{\lk}$-lattice $\L_{\ida, \lk}$, as defined in (\ref{eq:l_a}), where $\ida = \Qq^v$ and $v \in \Z$. This lattice will be denoted simply by $\L_{v, \lk}$.

\begin{lemma}\label{lem:inert-ram-nondyadic} Assume that $\Pp$ is a non-dyadic prime ideal that is either inert or ramified in $E$, and let $v \in \Z$. 
\begin{arabiclist}

	\item If $a \notin \P$, then $\P$ is inert, and $\L_{v, \lk} \simeq_{\O_{\lk}} (\P^{v}, 2x^2) \, \perp 	\, (\P^{v}, 2{a}x^2).$
	\item If $a \in \P$, then $\P$ is ramified, and $\L_{v, \lk} \simeq_{\O_{\lk}} (\P^{\lceil v/2 \rceil}, 2x^2) \, \perp \, (\P^{\lfloor v/2 \rfloor},2{a}x^2).
	$
\end{arabiclist}
\end{lemma}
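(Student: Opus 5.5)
The plan is to make everything explicit through Fröhlich's description of the local integers in Lemma~\ref{lem}, case 1. Since $2\notin\P$, we have $\O_{E_\Qq}=\O_{\lk}[\sqrt{-a}]$, with relative discriminant $\delta=(a)$.

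\textbf{Step 1: splitting type.} Because $a\O_K$ is square-free, $v_\P(a)\in\{0,1\}$. If $a\notin\P$, then $\delta$ is a unit ideal, so $E_\P/\lk$ is unramified. Having excluded the split case by hypothesis, $\P$ must be inert. If $a\in\P$, then $v_\P(\delta)=1$, so $\P$ is ramified. In that case $\Qq=(\sqrt{-a})$, since $\sqrt{-a}$ has $\Qq$-valuation $v_\P(a)=1$.

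\textbf{Step 2: the form in the basis $\{1,\sqrt{-a}\}$.} For $z=x+y\sqrt{-a}$ and $z'=x'+y'\sqrt{-a}$ with $x,y,x',y'\in\lk$, we have $z\overline{z'}=xx'+ayy'+(\dots)\sqrt{-a}$. Hence $\tr_{E_\P/\lk}(z\overline{z'})=2xx'+2ayy'$. The two lines $\lk$ and $\lk\sqrt{-a}$ are therefore orthogonal. On them the quadratic form restricts to $2x^2$ and $2ay^2$ respectively.

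\textbf{Step 3: identify $\Qq^v\otimes\O_{\lk}$ as a direct sum of the two lines.}
\begin{itemize}
\item In the inert case, $\Qq=\P\O_{E_\Qq}$, so $\Qq^v=\Pi^v\O_{\lk}[\sqrt{-a}]=\P^v\oplus\P^v\sqrt{-a}$. This gives the first claim.
\item In the ramified case, write $v=2k+r$ with $r\in\{0,1\}$. Then $\Qq^v=\Pi^k(\sqrt{-a})^r\O_{\lk}[\sqrt{-a}]$, up to a unit. Here we use that $\sqrt{-a}^2=-a$ and $-a=\Pi u$ for some unit $u$.
\item If $r=0$, this is $\P^k\oplus\P^k\sqrt{-a}$, and $k=\lceil v/2\rceil=\lfloor v/2\rfloor$.
\item If $r=1$, it is $\Pi^k(\O_{\lk}\sqrt{-a}+\O_{\lk}a)=\P^{k+1}\oplus\P^{k}\sqrt{-a}$. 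Here $k+1=\lceil v/2\rceil$ and $k=\lfloor v/2\rfloor$.
\end{itemize}
Equivalently, one checks directly that $x+y\sqrt{-a}\in\Qq^v$ if and only if $v_\Qq(x)\ge v$ and $v_\Qq(y\sqrt{-a})\ge v$. This holds because $v_\Qq(x)$ is even and $v_\Qq(y\sqrt{-a})$ is odd, so the two terms can never cancel. From this, $2v_\P(x)\ge v$ and $2v_\P(y)+1\ge v$, which gives exactly the ceiling and floor conditions.

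\textbf{Step 4: conclude.} Transport the form along the coordinates $(x,y)$, writing the second summand with coordinate $y$. The result is $(\P^{\lceil v/2\rceil},2x^2)\perp(\P^{\lfloor v/2\rfloor},2ax^2)$, as claimed.

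The main point of care is Step~3 in the ramified case: justifying the "no cancellation" valuation argument, equivalently the direct-sum description of $\Qq^v$, for negative $v$ as well as positive. The parity argument handles all $v\in\Z$ uniformly, so I would present it that way.
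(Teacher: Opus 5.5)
Your proposal is correct and follows the paper's proof essentially step for step. Both use Fr\"ohlich's description $\O_{E_\Qq}=\O_{\lk}[\sqrt{-a}]$ with discriminant $(a)$ to separate the inert and ramified cases, then the explicit description $\Qq^v=\P^{\lceil v/2\rceil}\oplus\sqrt{-a}\,\P^{\lfloor v/2\rfloor}$ (using $\Qq=(\sqrt{-a})$ when $a\in\P$), and finally the orthogonality of the two lines coming from $\tr_{E_\P/\lk}(\sqrt{-a})=0$; your parity-of-valuation check is a valid uniform alternative for Step~3, but the generator computation already handles all $v\in\Z$, as in the paper.
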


\begin{demo}
By Lemma \ref{lem}, $\O_{E_\Qq} = \O_{\lk}[\sqrt{-a} \,]$ and the relative discriminant of $E_{\Qq}/\lk$ is $(a)$. In particular, $\P$ is ramified if and only if $a \in \P$. 

$1.$ If $a \notin \P$, then 
$\Qq = \P\O_{E_\Qq}$ is inert, and 
$
\Qq^v = \P^v\O_{E_\Qq} = \P^v \oplus \sqrt{-a} \, \P^v.
$

$2.$ Now if $a \in \P$, then $\Qq^2 = \P \O_{E_\Qq}$ is ramified. Moreover, $a \notin \P^2$ since $(a)$ is square-free, so that $a$ is a uniformizer of $\O_{\lk}$, and $\Qq = (\sqrt{-a})$. Moreover if $v = 2\ell$ is even, 
$\Qq^v = (\sqrt{-a}\,)^{2\ell} = \P^\ell(\O_{\lk} \oplus \sqrt{-a} \, \O_{\lk}) = \P^\ell \oplus \sqrt{-a}\, \P^\ell.$
On the other hand if $v = 2\ell + 1$ is odd, 
$\Qq^{v} = \Qq^{2\ell} \Qq = \P^{\ell} (\sqrt{-a} \, \O_{\lk} \oplus \P) = \P^{\ell+1} \oplus \sqrt{-a} \, \P^\ell.$

Since $\tr_{E_\P/\lk}(\sqrt{-a}) = 0$, the direct summands we computed above are orthogonal for $\tr_{E_\P/\lk}(x\overline{y})$. Regarding the restriction of the form, for $x \in \lk$, we have $\bar{x} = x$, so $\tr_{E_\P/\lk}(x\overline{x}) = \tr_{E_\P/\lk}(x^2) = 2x^2.$
Similarly, for $x = \sqrt{-a} \, y \in \sqrt{-a}\,\lk$, we have $x\bar{x} = a y^2$, which yields $\tr_{E_\P/\lk}(x\overline{x}) = \tr_{E_\P/\lk}(a y^2) = 2a y^2.$
\end{demo}

\subsection{Over the totally ramified extension}\label{sec:totram}

Let $K$ and $\P$ be as in the previous section. We fix a subfield $F \subseteq K$ and we set $\p = \O_F \cap \P$. Recall the notation $\lk = K_\P$ and $\lf = F_\p$ in what follows. Let $\lm$ be the maximal unramified subextension of $\lk/\lf$, and let $\O_\lm$ be its valuation ring, with maximal ideal $\p_\lm$. By assumption, \(\lk/\lm\) is totally tamely ramified of degree
\(e=[\lk:\lm]\), with \(e\notin\p_\lm\).

The structure theorem for tamely ramified extensions of local fields implies
that one may choose a uniformizer \(\Pi\) of \(\O_\lk\) such that
\(\Pi^e\in\O_\lm\) is a uniformizer
\cite[Chapter~II, \S7, Proposition~7.7]{neukirch2013algebraic}. Writing
\(\pi=\Pi^e\), and since a uniformizer generates the ring of integers in a
totally ramified extension, we obtain:
\[
\O_\lk=\O_\lm[\Pi]
\qquad \text{and} \qquad
\Pi^e=\pi \in \O_\lm \ \text{is a uniformizer}.
\]
In particular, the minimal polynomial of the extension $\lk/\lm$ is $X^e - \pi$. In particular for all $w \in \Z$, one has
\begin{equation}\label{eq:comp-trace-prelim}
\tr_{\lk/\lm}(\Pi^w) = \left\{
    \begin{array}{ll}
        0 & \mbox{if} \ e \nmid w; \\
        e\pi^q & \mbox{if} \ w = qe.
    \end{array}
\right.
\end{equation}

Recall that in the previous section, $E = K(\sqrt{-a})$ denoted a CM extension, and that we computed splittings for lattices $\L_{\ida, \lk} = (\ida \otimes \O_{\lk},\tr_{E_\P/\lk}(x\overline{y}))$. Now our goal is to bring these splittings ``down to $\M$'', that is, we view $\ida \otimes \O_{\lk}$ as a quadratic lattice over $\O_\lm$, by composing with $\tr_{\lk/\lm}$. We denote this lattice by 
$$
\L_{\ida, \lm} := \Big(\ida \otimes \O_\lk, \tr_{\lk/\lm} \circ \tr_{E_\P/\lk}(x\overline{y})\Big).
$$ 
A splitting for $\L_{\ida, \lm}$ over $\O_\lm$ is then obtained in three steps: first, we compute a Gram matrix; next, we identify the modular blocks; and finally, we apply a classification result, \textit{e.g.,} \eqref{eq:nondyadic-modular-classification} for non-dyadic primes. 

As in Section~\ref{sec:cm}, we need to distinguish the split primes from the inert or ramified ones. Let us fix some notation that will be used in both situations.

\begin{definition}\label{defi:mat-J-G}
For $q \in \Z$ and $\ell,r \in \Z_{\geq 0}$, we define matrices $J_q ^{(\ell)} \in \M_\ell(\O_\lm)$ and $\vec{G}_{q,r} \in \M_e(\O_\lm)$ by
$$
J_q^{(\ell)} = \begin{pmatrix}
 0 &  & e\pi^q\\
 & \iddots &  \\
e\pi^q &  & 0 \\
\end{pmatrix} \quad \text{and} \quad 
\vec{G}_{q,r} = \left\{
    \begin{array}{ll}
        \begin{pmatrix}
e\pi^q & \mathbf{0} \\
\mathbf{0} & J_{q+1}^{(e-1)}
\end{pmatrix} & \mbox{if} \ r = 0; 
\vspace*{0.3cm}
\\
       \begin{pmatrix}
J_{q+1}^{(e-r+1)} & \mathbf{0} \\
\mathbf{0} & J_{q+2}^{(r-1)}
\end{pmatrix} & \mbox{if} \ r > 0.
    \end{array}
\right.
$$
\end{definition}

\subsubsection{Split case} Assume $\P$ splits in $E$, \textit{i.e.}, $\P \O_E = \Qq_1 \Qq_2$ with $\Qq_2 = \overline{\Qq_1}$. For any $v_1, v_2 \in \Z$, we denote by $\L_{v_1,v_2,\lm}$ the quadratic lattice $\L_{\ida,\lm}$ with $\ida = \Qq_1^{v_1} \Qq_2^{v_2}$.

\begin{lemma}\label{lem:Gram-split} Assume that $\P$ splits in $E$, and let $v_1, v_2 \in \Z$. Denote by $qe + r$ the Euclidean division of $v_1 + v_2$ by $e$. Then $\L_{v_1,v_2, \lm}$ has Gram matrix $$ \begin{pmatrix} \vec{0} & \vec{G}_{q,r} \\ \vec{G}_{q,r} & \vec{0} \end{pmatrix}. $$ \end{lemma}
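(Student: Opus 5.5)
By Lemma~\ref{lem:split}, it suffices to compute a Gram matrix of $(\P^{v_1}\times\P^{v_2},\ \tr_{\lk/\lm}\circ H)$ over $\O_\lm$. The plan is to pick an explicit $\O_\lm$-basis built from powers of $\Pi$, show that the off-diagonal block is a matrix of traces $\tr_{\lk/\lm}(\Pi^w)$, and evaluate those traces with \eqref{eq:comp-trace-prelim}.

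\textbf{Choice of basis.} Since $\O_\lk=\O_\lm[\Pi]$ is free over $\O_\lm$ with basis $1,\Pi,\dots,\Pi^{e-1}$, and $\P^{v}=\Pi^{v}\O_\lk$, the ideal $\P^{v}$ has $\O_\lm$-basis $\Pi^{v},\Pi^{v+1},\dots,\Pi^{v+e-1}$. I therefore take the basis of $\P^{v_1}\times\P^{v_2}$ consisting of
\[
\varepsilon_i=(\Pi^{v_1+i},0)\quad\text{and}\quad \varepsilon'_j=(0,\Pi^{v_2+j}),\qquad 0\le i,j\le e-1,
\]
ordered with all the $\varepsilon_i$ first.

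\textbf{Block structure.} Since $H((x_1,y_1),(x_2,y_2))=x_1y_2+x_2y_1$, two vectors both supported in the first coordinate pair to $0$, and likewise for the second coordinate. This gives the two zero diagonal blocks. The off-diagonal block is
\[
B_{ij}=\tr_{\lk/\lm}\bigl(\Pi^{v_1+i}\Pi^{v_2+j}\bigr)=\tr_{\lk/\lm}\bigl(\Pi^{qe+r+i+j}\bigr).
\]
It depends only on $i+j$, so $B$ is symmetric and the lower-left block $B^{T}$ equals $B$. It remains to show $B=\vec G_{q,r}$.

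\textbf{Evaluating the traces.} By \eqref{eq:comp-trace-prelim}, $B_{ij}\neq0$ exactly when $e\mid r+i+j$. Since $0\le i+j\le 2e-2$, this leaves only a few cases.
\begin{itemize}
\item If $r=0$, then either $i+j=0$, which gives the entry $e\pi^{q}$ at position $(0,0)$, or $i+j=e$ with $1\le i,j\le e-1$. The latter entries equal $e\pi^{q+1}$ and lie on the anti-diagonal of the lower-right $(e-1)\times(e-1)$ block, i.e.\ they form $J^{(e-1)}_{q+1}$.
\item If $r>0$, then either $i+j=e-r$ with $0\le i,j\le e-r$, giving $e\pi^{q+1}$ on the anti-diagonal of the top-left $(e-r+1)$-block, i.e.\ $J^{(e-r+1)}_{q+1}$; or $i+j=2e-r$, which forces $e-r+1\le i,j\le e-1$ and gives $e\pi^{q+2}$ on the anti-diagonal of the remaining $(r-1)$-block, i.e.\ $J^{(r-1)}_{q+2}$.
\end{itemize}
In each case all other entries vanish, which yields exactly $\vec G_{q,r}$ from Definition~\ref{defi:mat-J-G}.

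The only delicate step is this index bookkeeping: checking that the solutions of $i+j\equiv -r \pmod e$ in the range $0\le i,j\le e-1$ split into precisely the two index sets claimed, and that these two anti-diagonal blocks are disjoint.
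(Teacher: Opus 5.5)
Your proposal is correct and follows the same route as the paper. You reduce via Lemma~\ref{lem:split} to $(\P^{v_1}\times\P^{v_2},H)$ composed with $\tr_{\lk/\lm}$, take the $\O_\lm$-basis $\{(\Pi^{v_1+i},0)\}\cup\{(0,\Pi^{v_2+j})\}$, and evaluate $\tr_{\lk/\lm}(\Pi^{qe+r+i+j})$ using \eqref{eq:comp-trace-prelim}. Your index bookkeeping, namely observing that $r+i+j\in\{e,2e\}$ when $r>0$ and deriving $e-r+1\le i,j\le e-1$ on the second anti-diagonal, is actually more careful than the paper's own write-up.
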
 \begin{demo} Recall from Lemma \ref{lem:split} that $\L_{v_1,v_2, \lk}$ is isometric to $(\P^{v_1} \times \P^{v_2}, H)$, as lattices over $\O_{\lk}$. Thus, going down to $\O_\lm$, we have to compute a Gram matrix for $\P^{v_1} \times \P^{v_2}$ with respect to the quadratic form $$ H_{\lk/\lm} : ((x_1, y_1) \, ; \, (x_2,y_2)) \longmapsto \tr_{\lk/\lm}(x_1y_2 + x_2y_1). $$ Let $\mathcal{B} := \{(\Pi^{v_1 + i}, 0)\}_{0 \leq i \leq e-1} \cup \{(0,\Pi^{v_2 + j})\}_{0 \leq j \leq e-1}$ be the canonical basis of $(\P^{v_1} \times \P^{v_2}, H_{\lk/\lm})$ over $\O_\lm$. Its Gram matrix with respect to $H_{\lk/\lm}$ is $$ \begin{pmatrix} \vec{0} & \vec{A} \\ \vec{A} & \vec{0} \end{pmatrix}, \quad \text{where} \ \vec{A} = \Big(\tr_{\lk/\lm}(\Pi^{v_1+v_2 + i + j})\Big)_{0 \leq i,j \leq e-1}. $$ For $0 \leq i,j \leq e-1$, and by (\ref{eq:comp-trace-prelim}), we have \begin{align*} \tr_{\lk/\lm}(\mathrm{\Pi}^{v_1 + v_2 + i + j}) &= \left\{ \begin{array}{ll} e\mathrm{\Pi}^{v_1 + v_2 + i + j} & \mbox{if $e \mid v_1 + v_2 + i + j$};\\ 0 & \mbox{otherwise}\end{array} \right. \\ &= \left\{ \begin{array}{ll} e\mathrm{\Pi}^{eq} = e\pi^q & \mbox{if $r = 0 $ and $i = j = 0$}; \\ e\mathrm{\Pi}^{e(q+1)} = e \pi^{q+1} & \mbox{if $i+j = e-r$}; \\ e\mathrm{\Pi}^{e(q+2)} = e \pi^{q+2} & \mbox{if $i+j = 2e-r$}; \\ 0 & \mbox{otherwise}.\end{array} \right. \end{align*} Thus if $r=0$, nonzero coefficients of $\vec{A}$ appear only for $(i,j) = (0,0)$ and $(1, e-1)$, $\dots$, $(e-1,1)$, corresponding to the matrix $\vec{G}_{q,0}$. When $r>0$, the indices $(e-r,0), \dots, (0,e-r)$ satisfy $i + j = e-r$, while $(e-r+1,e-1), \dots, e-1, e-r+1)$ verify $i+j = 2e -r$. Therefore, $\vec{A} = \vec{G}_{q,r}$, and $\vec{G}$ has the desired shape. \end{demo} 

\begin{proposition}\label{prop:totram-split-nondyadic} Assume that $\Pp$ is a non-dyadic prime ideal and that it splits in $E$. Let $v_1,v_2 \in \Z$ and write $v_1 + v_2 = qe + r$ the Euclidean division of $v_1 + v_2$ by $e$. Then $\L_{v_1,v_2,\lm}$ is $\O_\lm$-isometric to one of the following. \begin{arabiclist} 
\item $ \langle \pi^q \rangle \otimes \langle 1, -1 \rangle \bigperp \langle \pi^{q+1} \rangle \otimes \langle 1, \dots, 1, (-1)^{e-1} \rangle,$ if $r=0;$ \item $\langle \pi^{q+1} \rangle \otimes \langle 1, \dots, 1, (-1)^e \rangle,$ if $r=1;$ \item $\langle \pi^{q+1} \rangle \otimes \underbrace{\langle 1, \dots, 1, (-1)^{e-r+1} \rangle}_{2(e-r+1)} \bigperp \langle \pi^{q+2} \rangle \otimes \underbrace{\langle 1, \dots, 1, (-1)^{r-1} \rangle}_{2(r-1)},$ if $r>1$. \end{arabiclist} \end{proposition}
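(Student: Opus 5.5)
Starting from the Gram matrix of Lemma~\ref{lem:Gram-split}, I will split the lattice into modular blocks by permuting the basis, and then identify each block with the classification \eqref{eq:nondyadic-modular-classification}.

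\textbf{Decomposition into blocks.} The Gram matrix is
\[
\begin{pmatrix} \vec{0} & \vec{G}_{q,r} \\ \vec{G}_{q,r} & \vec{0}\end{pmatrix},
\]
and $\vec{G}_{q,r}$ is block diagonal with anti-diagonal blocks $J^{(\ell)}_{s}$ (and possibly a $1\times1$ block $e\pi^q$). Reordering the basis $\mathcal B$ shows that $\L_{v_1,v_2,\lm}$ is an orthogonal sum of sublattices with Gram matrices
\[
\begin{pmatrix}0&J^{(\ell)}_s\\ J^{(\ell)}_s&0\end{pmatrix}.
\]
The sublattice $\{(\Pi^{v_1+i},0)\}\cup\{(0,\Pi^{v_2+j})\}$ for indices $i,j$ in a given anti-diagonal block pairs up as a set.

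\textbf{The blocks by case.}
\begin{itemize}
\item If $r=0$, we get a rank-$2$ block with Gram matrix $e\pi^q\begin{psmallmatrix}0&1\\1&0\end{psmallmatrix}$, and a rank-$2(e-1)$ block with Gram matrix $\begin{psmallmatrix}0&J^{(e-1)}_{q+1}\\ J^{(e-1)}_{q+1}&0\end{psmallmatrix}$.
\item If $r=1$, the second block of $\vec G$ is empty. Indeed $J^{(0)}_{q+2}$ is empty, so there is a single block of rank $2e$ built from $J^{(e)}_{q+1}$.
\item If $r>1$, there are two blocks, of ranks $2(e-r+1)$ and $2(r-1)$, with scales $\pi^{q+1}$ and $\pi^{q+2}$.
\end{itemize}

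\textbf{Modularity and discriminants.} Since $e\notin\p_\lm$, $e$ is a unit. A block with Gram matrix $e\pi^s P$, where $P$ is a permutation matrix (symmetric, since the anti-diagonal is an involution), is therefore $\p_\lm^s$-modular: its Gram matrix is $\pi^s$ times an invertible matrix over $\O_\lm$.

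Because the scales $\pi^q$, $\pi^{q+1}$, $\pi^{q+2}$ are distinct, these blocks can be grouped into Jordan components. By \eqref{eq:nondyadic-modular-classification}, each block is determined by its rank $m$, scale $\pi^s$ and discriminant. So it only remains to show that $\disc\cdot\pi^{-sm}$ agrees, modulo squares of units, with the last entry $(-1)^{m/2}$ prescribed in the statement.

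The Gram matrix of a block of rank $m=2\ell$ is $e\pi^s$ times a symmetric permutation matrix $P$ of size $2\ell$. Its determinant is $(e\pi^s)^{2\ell}\det P$. Here $(e\pi^s)^{2\ell}$ is a square times $\pi^{sm}$, hence trivial in the unit square class after dividing by $\pi^{sm}$. The permutation $P$ is the involution swapping $i$ with $\ell+1-i$ of the opposite half. It consists of $\ell$ disjoint transpositions (no fixed points, since it exchanges the two halves), so $\det P=(-1)^\ell$.

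This gives the following unit parts:
\begin{itemize}
\item $(-1)^{1}$ for the $r=0$ rank-$2$ block, giving $\langle1,-1\rangle$;
\item $(-1)^{e-1}$ for the $r=0$ rank-$2(e-1)$ block;
\item $(-1)^e$ for the $r=1$ block;
\item $(-1)^{e-r+1}$ and $(-1)^{r-1}$ for the two blocks when $r>1$.
\end{itemize}
These match the stated forms exactly.

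\textbf{Main obstacle.} The delicate step is the bookkeeping of the sign of $\det P$: one must check that the doubled anti-diagonal permutation has no fixed points, so that it is exactly $\ell$ transpositions. One must also handle the degenerate block sizes, namely $e=1$ with $r=0$, and the empty $J^{(0)}$. With the reordering set up correctly, everything else is immediate.
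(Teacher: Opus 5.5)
Your proposal is correct and follows essentially the same route as the paper. Both arguments start from the Gram matrix of Lemma~\ref{lem:Gram-split}, split off orthogonal $\p_\lm^s$-modular blocks using that $e$ is a unit, compute their discriminants, and conclude with \eqref{eq:nondyadic-modular-classification}; the only cosmetic difference is that the paper further splits into rank-two pieces $e_i\O_\lm+f_{e-r-i}\O_\lm$ with Gram matrix $J^{(2)}_s$, whereas you keep the larger anti-diagonal blocks and read off the sign $(-1)^{\ell}$ from a fixed-point-free involution, which gives the same discriminant classes.
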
 
\begin{demo} 
Let $e_i = (\Pi^{v_1+i},0)$ and $f_j = (0,\Pi^{v_2+j})$ for $0 \le i,j \le e-1$. Then $\mathcal{B} = \{e_i,f_j\}$ is a basis of $\L = \P^{v_1}\times \P^{v_2}$ over $\O_\lm$. 

1. Assume $r=0$. Set $\L_1 = e_0\O_\lm + f_0\O_\lm$ and $\L_2 = \bigoplus_{i=1}^{e-1}(e_i\O_\lm + f_{e-i}\O_\lm)$. Then $\L = \L_1 \perp \L_2$, with Gram matrices $J_q^{(2)}$ and $\mathrm{diag}(J_{q+1}^{(2)},\dots,J_{q+1}^{(2)})$. Thus $\L_1$ is $\p^q$-modular and $\L_2$ is $\p^{q+1}$-modular. The result then follows from the classification (\ref{eq:nondyadic-modular-classification}) and the computation of the discriminant: 
\[ \disc(\L_1)=\det\bigl(J_q^{(2)}\bigr)=-(e\pi^q)^2 \equiv -\pi^{2q} \pmod{\O_\lm^{\times 2}}, 
\] since $e \in \O_\lm^\times$. Note that $\disc(\L_2) = \det\bigl(J_{q+1}^{(2)}\bigr)^{e-1}$, so this concludes in that case. Assume $r = 1$. We have $\vec{G}_{q,1} = J_{q+1}^{(e)}$, hence $\L$ is $\p^{q+1}$-modular. Its discriminant satisfies $\mathrm{disc}(\L) = (-1)^e \pi^{2e(q+1)} \pmod{\O_\lm^{\times 2}},$ and the conclusion follows from (\ref{eq:nondyadic-modular-classification}). 

2. Assume $r > 0$. Then $\L$ splits into $\L_1 \perp \L_2$, where \[ \L_1 = \bigoplus_{i=0}^{e-r}\Big(e_i\O_\lm + f_{e-r-i}\O_\lm\Big) \quad \text{and} \quad \L_2 = \bigoplus_{i=1}^{r-1}\Big(e_{e-r+i}\O_\lm + f_{e-i}\O_\lm\Big). \] Their Gram matrices are $\mathrm{diag}(J_{q+1}^{(2)},\dots)$ and $\mathrm{diag}(J_{q+2}^{(2)},\dots)$. In particular, $\L_1$ is $\p_\lm^{q+1}$-modular and $\L_2$ is $\p_\lm^{q+2}$-modular. We conclude as in the previous case. 
\end{demo}

\subsubsection{Inert or ramified case}
Depending on the ramification of $\P$ in $E$, we saw that the scaled form $2a x^2$ may appear in the splitting of lattices $\L_{\ida,\lk}$ (\textit{c.f.,} Lemma~\ref{lem:inert-ram-nondyadic}). When descending to $\lm$, one is therefore led to consider the quadratic form $2\tr_{\lk/\lm}(a x^2)$, but the scaling by $a$ makes the computations impractical. The following lemma shows that, in the non-dyadic case, $a$ can be replaced by an element of $\O_\lm^\times \cup \Pi \O_\lm^\times$, which makes the computations feasible. 

\begin{lemma}\label{lem:replace-alpha}
Assuming $\Pp$ non-dyadic, there is a group isomorphism 
$$
\O_\lm^\times / \O_\lm^{\times \, 2} \to \O_\lk^\times / \O_\lk^{\times \, 2}
$$  
\end{lemma}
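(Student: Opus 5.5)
The map will be the one induced by the inclusion $\O_\lm \subseteq \O_\lk$. Since $\O_\lm^{\times 2}$ maps into $\O_\lk^{\times 2}$, the induced map
\[
\iota\colon \O_\lm^\times/\O_\lm^{\times 2}\to \O_\lk^\times/\O_\lk^{\times 2}
\]
is a well-defined group homomorphism. It then suffices to show that both groups have order $2$ and that $\iota$ is injective, or equivalently surjective.

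The key tool is Hensel's lemma in the non-dyadic setting. For a non-dyadic local field with valuation ring $R$, maximal ideal $\p_R$ and residue field $k$, a unit $u\in R^\times$ is a square if and only if its image in $k^\times$ is a square. Indeed, if $\bar u=\bar x^2$, then $X^2-u$ has the simple root $\bar x$ modulo $\p_R$, because $2\bar x\neq 0$ as $2\notin\p_R$, and this root lifts. Consequently, reduction modulo the maximal ideal gives an isomorphism $R^\times/R^{\times 2}\simeq k^\times/k^{\times 2}$. Since $k$ is a finite field of odd characteristic, this group has order $2$. I will apply this to both $\O_\lm$ and $\O_\lk$; both are non-dyadic since $\P$ is.

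Next I use that $\lk/\lm$ is totally ramified of degree $e$, as recalled in Subsection~\ref{sec:totram}. Hence the residue fields of $\O_\lm$ and $\O_\lk$ coincide; call this common field $k$. The reduction maps are therefore compatible with $\iota$, giving a commutative square
\[
\begin{tikzcd}
\O_\lm^\times/\O_\lm^{\times 2} \arrow[r,"\iota"] \arrow[d,"\sim"'] & \O_\lk^\times/\O_\lk^{\times 2} \arrow[d,"\sim"] \\
k^\times/k^{\times 2} \arrow[r,equal] & k^\times/k^{\times 2}
\end{tikzcd}
\]
It follows that $\iota$ is an isomorphism.

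The only point requiring care is the identification of the residue fields. This comes from $\lm$ being the maximal unramified subextension, so that $f(\lk/\lm)=1$. Equivalently, $\O_\lk=\O_\lm[\Pi]$ with $\Pi^e=\pi$, so every element of $\O_\lk$ is congruent modulo $\Pi$ to an element of $\O_\lm$. The Hensel step is standard, and I expect no real obstacle. Note that tameness is not even needed for this lemma; non-dyadicity and total ramification suffice.
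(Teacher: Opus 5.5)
Your proof is correct and is essentially the paper's argument: both rest on the equality of residue fields for the totally ramified extension $\lk/\lm$ and on Hensel's lemma in odd residue characteristic. The paper phrases it as surjectivity via $\O_\lk^\times = \O_\lm^\times \cdot U_\lk^{(1)}$ with $U_\lk^{(1)} \subseteq \O_\lk^{\times 2}$, plus a separate kernel computation; your commutative square through $k^\times/k^{\times 2}$ packages the same two steps more cleanly.
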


\begin{demo}
Since $\lk$ is a totally ramified extension of $\lm$, both fields have the same residue field $k$. Fix a set of representatives $S \subset \O_\lm^\times$ for $k^\times$. Then we have
\begin{align*}
\O_\lk^\times = S \cdot U_\lk^{(1)} \quad \text{and} \quad \O_\lm^\times = S \cdot U_\lm^{(1)},
\end{align*}
where $U_\lk^{(1)} = 1 + \P$ and $U_\lm^{(1)} = 1 + \p_\lm$. In particular, $\O_\lk^\times = \O_\lm^\times \cdot U_\lk^{(1)}$. Moreover, under the assumption $2 \notin \P$, Hensel's lemma implies that $U_\lk^{(1)} \subseteq \O_\lk^{\times 2}$.\footnote{If $x \in U_{\lk}^{(1)}$, then $x$ reduces to $1 \in k^{\times 2}$, by definition. Hence $f(X)=X^2-x$ has a simple root modulo $\P$ at $X=1$, since $f'(1)=2 \notin \P$. By Hensel's lemma, $x$ has a square root in $\O_\lk^\times$.} It follows that the natural map $\O_\lm^\times \longrightarrow \O_\lk^\times / \O_\lk^{\times 2}$ is surjective. Finally, its kernel is $\O_\lm^{\times 2}$, again by an application of Hensel's lemma.
\end{demo}

\begin{lemma}\label{lem:Gram-inert-nondyadic}
Assume that $\Pp$ is a non-dyadic prime ideal that is either inert or ramified in $E$, and let $v \in \Z$.   
\begin{arabiclist}
\item If $a \notin \P,$ there exists $\mu_\lm \in \O_\lm^\times$ such that $\L_{v,\lm}$ has Gram matrix 
$$
\vec{G} = \begin{pmatrix}
2\vec{G}_{q,r} & \vec{0} \\ \vec{0} & 2\mu_\lm\vec{G}_{q,r}
\end{pmatrix},
$$
where $qe + r$ is the Euclidean division of $2v$ by $e$.
\item If $a \in \P$, there exists $\mu_\lm \in \O_\lm^\times$ such that $\L_{v,\lm}$ has Gram matrix 
$$
\vec{G} = \begin{pmatrix}
2\vec{G}_{q_1,r_1} & \vec{0} \\ \vec{0} & 2\mu_\lm\vec{G}_{q_2,r_2}
\end{pmatrix},
$$ 
where $q_1 e + r_1$ and $q_2 e + r_2$ are respectively the Euclidean divisions of $2\lceil v/2 \rceil$ and $2\lfloor v/2 \rfloor + 1$ by $e$. 
\end{arabiclist}
\end{lemma}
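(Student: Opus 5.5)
We start from the splitting of Lemma~\ref{lem:inert-ram-nondyadic}, which gives $\L_{v,\lk}\simeq(\P^{v_1},2x^2)\perp(\P^{v_2},2ax^2)$, with $(v_1,v_2)=(v,v)$ if $a\notin\P$ and $(v_1,v_2)=(\lceil v/2\rceil,\lfloor v/2\rfloor)$ if $a\in\P$. Composing with $\tr_{\lk/\lm}$ keeps the two summands orthogonal, since orthogonality over $\lk$ implies orthogonality after applying an $\lm$-linear map. So it suffices to compute a Gram matrix for each summand separately over $\O_\lm$, using the basis $\{\Pi^{w+i}\}_{0\le i\le e-1}$ of $\P^w$.

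For the first summand, the Gram matrix is $\bigl(2\tr_{\lk/\lm}(\Pi^{2v_1+i+j})\bigr)_{i,j}$. This is exactly the matrix $\vec A$ computed in the proof of Lemma~\ref{lem:Gram-split}, with $v_1+v_2$ replaced by $2v_1$. Hence it equals $2\vec G_{q,r}$, where $qe+r$ is the Euclidean division of $2v_1$ by $e$. This gives $2\vec G_{q,r}$ in case 1 and $2\vec G_{q_1,r_1}$ in case 2.

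For the second summand we must handle the factor $a$. In case 1, $a\in\O_\lk^\times$, and by Lemma~\ref{lem:replace-alpha} we can write $a=\mu_\lm u^2$ with $\mu_\lm\in\O_\lm^\times$ and $u\in\O_\lk^\times$. Multiplication by $u$ is an $\O_\lm$-linear automorphism of $\P^{v}$, and it transports the form $2\tr(ax^2)$ to $2\mu_\lm\tr(x^2)$, because $\mu_\lm$ is $\lm$-rational and can be pulled out of the trace. The previous computation then gives $2\mu_\lm\vec G_{q,r}$.

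In case 2, $a$ is a uniformizer of $\lk$ because $(a)$ is square-free, so $a=\Pi\varepsilon$ with $\varepsilon\in\O_\lk^\times$. Applying Lemma~\ref{lem:replace-alpha} to $\varepsilon$ gives $\varepsilon=\mu_\lm u^2$. Then $2\tr(ax^2)=2\mu_\lm\tr(\Pi(ux)^2)$, and $x\mapsto ux$ preserves $\P^{v_2}$. The Gram matrix in the basis $\Pi^{v_2+i}$ is therefore $2\mu_\lm\bigl(\tr(\Pi^{2v_2+1+i+j})\bigr)$, which equals $2\mu_\lm\vec G_{q_2,r_2}$ by the same index analysis, with $2\lfloor v/2\rfloor+1$ in place of $v_1+v_2$.

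The main point needing care is this uniformizer step. The paper fixed $\Pi$ with $\Pi^e=\pi$, and $a$ need not equal that $\Pi$ times a square. We absorb the discrepancy into a unit square times an $\lm$-unit, and Lemma~\ref{lem:replace-alpha} guarantees this is possible. Everything else reduces verbatim to the trace computation \eqref{eq:comp-trace-prelim} already used in Lemma~\ref{lem:Gram-split}.
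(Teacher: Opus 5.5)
Your proposal is correct and follows essentially the same route as the paper. You start from the splitting of Lemma~\ref{lem:inert-ram-nondyadic}, compute the trace Gram matrices in the basis $\{\Pi^{w+i}\}$ exactly as in Lemma~\ref{lem:Gram-split}, and use Lemma~\ref{lem:replace-alpha} to write the unit part of $a$ (namely $a$ itself, or $a\Pi^{-1}$ in the ramified case) as $\mu_\lm$ times a square in $\O_\lk^\times$, absorbing the square by the change of variables $x\mapsto ux$; your explicit handling of that change of basis is slightly more careful than the paper's, which simply states the resulting Gram matrix.
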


\begin{demo}
$1.$ By Lemma \ref{lem:inert-ram-nondyadic}, $\L_{v,\lm}$ has a Gram matrix diag$(\vec{A}, \vec{B})$ where $\vec{A}$ (resp. $\vec{B})$ is a Gram matrix of $\P^v$ with respect to the quadratic form $\tr_{\lk/\lm}(2x^2)$ (resp. $\tr_{\lk/\lm}(2a x^2))$. Considering the canonical basis $\{\Pi^v, \dots, \Pi^{v+ e -1}\}$ of $\P^v$, we obtain:
\[
\vec{A} = \left( 2\tr_{\lk/\lm}\left(\Pi^{2v+i+j}\right)\right)_{0 \leq i,j \leq e-1} \ \text{and} \ \vec{B} = \left( 2\tr_{\lk/\lm}(a\Pi^{2v+i+j})\right)_{0 \leq i,j \leq e-1}.
\]
As in the proof of Lemma~\ref{lem:Gram-split}, one checks that $\vec{A} = 2\vec{G}_{q,r}$. By Lemma \ref{lem:replace-alpha}, there exists $\mu_\lm \in \O_\lm^{\times}$ such that  $a^{-1}\mu_\lm \in \O_\lk^{\times 2}$. It follows from a change of variables that $(\P^v, 2\tr_{\lk/\lm}(a x^2)) \simeq_{\O_\lm} (\P^v, 2\mu_\lm\tr_{\lk/\lm}(x^2))$. Therefore, $\vec{B} = 2\mu_\lm \vec{G}_{q,r}$.

$2.$ Again by Lemma \ref{lem:inert-ram-nondyadic}, $\L_{v,\lm}$ has a Gram matrix diag$(\vec{A}, \vec{B})$, but now $\vec{A}$ (resp. $\vec{B})$ is a Gram matrix of $\P^{\lceil v/2 \rceil}$ (resp. $\P^{\lfloor v/2 \rfloor}$) with respect to the quadratic form $\tr_{\lk/\lm}(2x^2)$ (resp. $\tr_{\lk/\lm}(2a x^2))$. Thus, 
\[
\vec{A} = \left( 2\tr_{\lk/\lm}\left(\Pi^{2\lceil v/2 \rceil+i+j}\right)\right)_{0 \leq i,j \leq e-1} \ \text{and} \ \vec{B} = \left( 2\tr_{\lk/\lm}(a\Pi^{2\lfloor v/2 \rfloor+i+j})\right)_{0 \leq i,j \leq e-1}.\] The same computation as above shows that $\vec{A} = 2\vec{G}_{q_1, r_1}$, where $2\lceil v/2 \rceil = q_1e + r_1$. For $\vec{B}$, note that $v_\P(a) = 1$,\footnote{Indeed, $a \in \P$ so $v_\P(a) \geq 1$, but the assumption that $(a)$ has no square factor implies $v_\P(a) \leq 1$.} so we can write $a = \mu\Pi$ for some $\mu \in \O_\lk ^\times$. By Lemma \ref{lem:replace-alpha}, there is a unit $\mu_\lm \in \O_\lm^{\times}$ such that $\vec{B} = (2\mu_\lm \tr_{\lk/\lm}(\Pi^{2\lfloor v/2 \rfloor + i + j + 1})_{0 \leq i,j \leq e-1} = 2\mu_\lm \vec{G}_{q_2,r_2}$, where $2\lfloor v/2 \rfloor  + 1 = q_2 e + r_2$.
\end{demo}

\begin{proposition}\label{prop:totram-inert-nondyadic}
Assume $\Pp$ is a non-dyadic prime ideal, and let $v \in \Z$. There exists a unit $\mu_\lm \in \O_\lm^{\times}$ such that $\L_{v, \lm}$ is $\O_\lm$-isometric to one the following.
\begin{arabiclist}
\item Assume that $a \notin \P$, and let $qe + r$ be the Euclidean division of $2v$ by $e$.
\vspace*{0.2cm}
\begin{alphlist}
\item $\langle \pi^q \rangle \otimes \langle 1, \mu_{\lm} \rangle \bigperp \langle \pi^{q+1} \rangle \otimes \langle 1, \dots, 1, \mu_\lm^{e-1} \rangle$, if $r=0;$ 
\item $\langle \pi^{q+1} \rangle \otimes \underbrace{\langle 1, \dots, 1, \mu_\lm^{e-r+1} \rangle}_{2(e-r+1)} \bigperp \langle \pi^{q+2} \rangle \otimes \underbrace{\langle 1, \dots, 1, \mu_\lm^{r-1} \rangle}_{2(r-1)}$, if $r > 0$.
\end{alphlist}
\item Assume $a \in \P$ and $v$ is even. Let $qe + r$ be the Euclidean division of $v$ by $e$.
\vspace*{0.2cm}
	\begin{alphlist}
	\item $\langle 2e \pi^q \rangle \bigperp \langle \pi^{q+1} \rangle \otimes \langle 1, \dots, 1, (-\mu_\lm)^{e-1}2e\mu_\lm \rangle$, if $r = 0;$

	\item $\langle \pi^{q+1} \rangle \otimes \underbrace{\langle 1, \dots, 1, (-\mu_\lm)^{e-r} 2e \rangle}_{2(e-r) +1} 
	\bigperp \langle \pi^{q+2} \rangle \otimes \underbrace{\langle 1, \dots, 1, (-\mu_\lm)^{r-1} 2e \mu_\lm \rangle}_{2r-1},$ \\ if $r>0$.
	\end{alphlist}
\item  Assume $a \in \P$ and $v$ is odd. Let $qe + r$ be the Euclidean division of $v$ by $e$.
\vspace*{0.2cm}
	\begin{alphlist}
	\item $\langle 2e \mu_\lm \pi^q \rangle \bigperp \langle \pi^{q+1} \rangle \otimes \langle 1, \dots, 1, (-\mu_\lm)^{e-1} 2e \rangle,$ if $r = 0;$
	\item $\langle \pi^{q+1} \rangle \otimes \underbrace{\langle 1, \dots, 1, (-\mu_\lm)^{e-r} 2e\mu_\lm \rangle}_{2(e-r) +1} \bigperp \langle \pi^{q+2} \rangle \otimes \underbrace{\langle 1, \dots, 1, (-\mu_\lm)^{r-1} 2e \rangle}_{2r-1}$, \\ if $r>0.$
	\end{alphlist}
\end{arabiclist}
\end{proposition}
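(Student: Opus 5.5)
The plan is to start from the Gram matrices of Lemma~\ref{lem:Gram-inert-nondyadic}, which already give an orthogonal splitting $\L_{v,\lm}=\L_A\perp\L_B$. The form on $\L_A$ is $2\vec{G}_{\cdot,\cdot}$ and the form on $\L_B$ is $2\mu_\lm\vec{G}_{\cdot,\cdot}$. Each matrix $\vec{G}_{q,r}$ decomposes, exactly as in the proof of Proposition~\ref{prop:totram-split-nondyadic}, into:
\begin{itemize}
\item a possible $1\times1$ block $(e\pi^q)$, present when $r=0$;
\item hyperbolic-type $2\times2$ blocks $J^{(2)}_{q+1}$ and $J^{(2)}_{q+2}$, obtained by pairing basis vectors $\Pi^{v+i}$ and $\Pi^{v+j}$ with $i+j$ fixed;
\item possibly a single diagonal entry $e\pi^{q+1}$ or $e\pi^{q+2}$, which occurs when the anti-diagonal $J^{(\ell)}$ has odd size $\ell$.
\end{itemize}
The rank and scale of each Jordan component are then read off by counting the sizes of the anti-diagonal blocks: $1$, $e-1$, $e-r+1$ and $r-1$ in the notation of Definition~\ref{defi:mat-J-G}. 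Since $2,e,\mu_\lm\in\O_\lm^\times$, all these blocks are $\p_\lm^{s}$-modular for the evident $s$.

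I then group the blocks of the same scale coming from $\L_A$ and $\L_B$. Each component is classified by \eqref{eq:nondyadic-modular-classification}, so only its discriminant modulo $\O_\lm^{\times2}$ has to be computed. A $2\times2$ block $J^{(2)}_s$ scaled by $2$ has discriminant $-4e^2\pi^{2s}\equiv-\pi^{2s}$. Scaled by $2\mu_\lm$ it has discriminant $-\mu_\lm^2\pi^{2s}\equiv-\pi^{2s}$. A singleton $\langle 2e\pi^s\rangle$ or $\langle 2e\mu_\lm\pi^s\rangle$ contributes its own class.

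\emph{Case 1} ($a\notin\P$). Here $\L_A$ and $\L_B$ have identical shape, so each component consists of matching pieces from both. For $r=0$ the scale-$\p^q$ part is $\langle 2e\pi^q,2e\mu_\lm\pi^q\rangle$, of discriminant $\mu_\lm\pi^{2q}$, which gives $\langle\pi^q\rangle\otimes\langle1,\mu_\lm\rangle$. For the other pieces, the discriminant is a product of terms $-1$ from the hyperbolic pairs, and of $\mu_\lm$ from the odd-size leftover singletons, one in each of $\L_A$ and $\L_B$. Collecting signs and powers of $\mu_\lm$ modulo squares should yield the stated last entry. The statement writes this entry as $\mu_\lm^{e-1}$ (resp.\ $\mu_\lm^{e-r+1}$, $\mu_\lm^{r-1}$), where the parity of the exponent encodes whether a leftover singleton occurs. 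I expect the $(-1)$ factors to cancel in pairs because $\L_A$ and $\L_B$ each contribute the same number of them; I would check this parity bookkeeping carefully.

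\emph{Cases 2 and 3} ($a\in\P$). The exponents become $2\lceil v/2\rceil$ and $2\lfloor v/2\rfloor+1$, that is, $v$ and $v+1$ when $v$ is even, and $v+1$ and $v$ when $v$ is odd. The stated formulas use the Euclidean division $v=qe+r$, so I must relate the divisions of $v$ and $v+1$ to $(q,r)$. This is a shift of one step along the basis, and it is the main obstacle. The cleanest approach is to notice that the two Gram matrices fit together as the Gram matrix of $\Qq^v$ for the basis $\{\Pi^{\lceil v/2\rceil+i},\sqrt{-a}\,\Pi^{\lfloor v/2\rfloor+j}\}$. The relevant exponents of $\pi$ are then $(w+i+j)/e$ with $w\in\{v,v+1\}$, interleaved. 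One enumerates the anti-diagonals of total exponent $e(q+1)$ and $e(q+2)$ across both pieces.
\begin{itemize}
\item In total there are $2(e-r)+1$ basis vectors pairing at level $q+1$ and $2r-1$ pairing at level $q+2$ (for $r>0$). For $r=0$ there is one vector of scale $q$ and $2e-1$ of scale $q+1$.
\item The hyperbolic pairs now mix a $2$-scaled factor with a $2\mu_\lm$-scaled one. This produces the factors $-\mu_\lm$ per pair (up to squares) in the discriminants $(-\mu_\lm)^{k}2e$ or $(-\mu_\lm)^{k}2e\mu_\lm$.
\item Which of the two pieces supplies the unpaired singleton, hence whether it carries $\mu_\lm$, depends on the parity of $v$. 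This is what distinguishes Case~2 from Case~3.
\end{itemize}
I would verify the count of pairs and the location of the singleton on small examples, for instance $e=2,3$, before writing the general index bookkeeping, and then conclude each component by \eqref{eq:nondyadic-modular-classification}.
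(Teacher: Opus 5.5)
Your framework is the paper's, and so are Case~1 and your rank counts for Cases~2--3. You take the block-diagonal Gram matrix of Lemma~\ref{lem:Gram-inert-nondyadic}, group the anti-diagonal blocks by scale, compute discriminants, and apply \eqref{eq:nondyadic-modular-classification}. The gap is in Cases~2 and~3, where the mechanism you give for the factors $(-\mu_\lm)^k$ is false.

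The summands $\P^{\lceil v/2\rceil}$ and $\sqrt{-a}\,\P^{\lfloor v/2\rfloor}$ are orthogonal, and the Gram matrix is $\mathrm{diag}(2\vec G_{q,r},2\mu_\lm\vec G_{q',r'})$. So no hyperbolic pair ever mixes a $2$-scaled vector with a $2\mu_\lm$-scaled one, and interleaving the bases does not change this. Every pair lies inside one summand and contributes $-1$ modulo squares, as you computed yourself earlier, never $-\mu_\lm$. Likewise, the parity of $v$ does not decide which summand supplies the singleton; the parity of the block sizes does. Applied literally, your rule gives wrong classes. For example, in case~2(a) with $e$ even, the singleton $2e\pi^{q+1}$ comes from the real block $2J^{(e-1)}_{q+1}$. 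Your rule then gives $(-\mu_\lm)^{e-1}2e\equiv -2e\mu_\lm$, whereas the true class is $-2e$, and $\mu_\lm$ need not be a square.

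The correct bookkeeping, which is the paper's determinant computation, goes as follows. At scale $\p_\lm^s$ the component is $2J^{(m)}_s\perp 2\mu_\lm J^{(m')}_s$, where $m$ and $m'$ are the sizes of the blocks coming from the real and the $\sqrt{-a}$ summands, and $|m-m'|=1$. For $v$ even, $(m,m')=(e-r+1,e-r)$ at scale $q+1$ and $(r-1,r)$ at scale $q+2$. When $r=0$ they are $(1,0)$ and $(e-1,e)$. The case $r=e-1$, where $v+1$ has remainder $0$, gives the same sizes. For $v$ odd the two entries are swapped. Since $\det J^{(\ell)}_s=(-1)^{\lfloor \ell/2\rfloor}(e\pi^s)^\ell$, you get
\[
\disc\equiv(-1)^{\lfloor m/2\rfloor+\lfloor m'/2\rfloor}(2e)^{m+m'}\mu_\lm^{m'}\pi^{s(m+m')}
\equiv(-1)^{k}\,2e\,\mu_\lm^{m'}\,\pi^{s(m+m')},\qquad k=\min(m,m'),
\]
using $\lfloor m/2\rfloor+\lfloor m'/2\rfloor=\min(m,m')$ when $|m-m'|=1$. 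This equals $(-\mu_\lm)^k2e$ if $m'=k$, and $(-\mu_\lm)^k2e\mu_\lm$ if $m'=k+1$.

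So the sign counts all the pairs, and the power of $\mu_\lm$ is simply the size of the $\sqrt{-a}$-block. The extra $\mu_\lm$ appears exactly when that block is the larger one, which is what the parity of $v$ together with the scale controls. The coincidence of the two exponents is what makes the notation $(-\mu_\lm)^k$ work; there is no per-pair factor $\mu_\lm$.
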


\begin{demo} 
$1.$ By Lemma \ref{lem:replace-alpha}, there exists $\mu_\lm \in \O_\lm^{\times}$ such that $\mu_\lm \equiv a \pmod{\O_\lk^{\times 2}}$. Let us denote $e_i = (\Pi^{v + i}, 0)$, and $f_j = (0, \Pi^{v+j})$ for $0 \leq i,j \leq e-1$, so that Lemma \ref{lem:Gram-inert-nondyadic} \textit{1.} is the Gram matrix of $\{ e_i, f_j \}$ with respect to $2\tr_{\lk/\lm}(x^2) \perp 2\mu_\lm\tr_{\lk/\lm}(x^2)$.

$a.$ If $r=0$, this matrix is diag($2e\pi^q, 2J_{q+1}^{(e-1)}, 2\mu_\lm e\pi^q, 2 \mu_\lm J_{q+1}^{(e-1)})$, and thus, $\L_0 = e_0 \O_\lm + f_0 \O_\lm$ is a $\p_\lm^q$-modular sublattice of rank two. Moreover, its discriminant is $4e^2\mu_\lm \pi^{2q} \equiv \mu_\lm \pi^{2q} \pmod{\O_\lm^{\times 2}}$, so $\L_0 \simeq_{\O_\lm} \langle \pi^q \rangle \otimes \langle 1, \mu_\lm \rangle$. The sublattice spanned by the other vectors is orthogonal to $\L_0$ and it is a $\p_\lm^{q+1}$-modular lattice. Its discriminant is 
\begin{align*}
\det(2J_{q+1}^{(e-1)}) \cdot \det(2 \mu_\lm J_{q+1}^{(e-1)}) &= (2e)^{2(e-1)} \mu_{\lm}^{e-1} \pi^{2(e-1)(q+1)} \\ &\equiv \mu_\lm^{e-1} \pi^{2(e-1)(q+1)} \pmod{\O_\lm^{\times 2}}.
\end{align*}
This gives the result in this case. $b.$ is treated similarly.
\\

$2.$ We fix $\mu_\lm$ such that $\mu_\lm \equiv \Pi^{-1}a \pmod{\O_\lk^{\times 2}}$. As $v$ is even, a Gram matrix of $\L_{v,\lm}$ is, according to Lemma \ref{lem:Gram-inert-nondyadic} \textit{2.}, diag($2\vec{G}_{q,r}, 2\mu_\lm\vec{G}_{q',r'})$, where $qe + r$ (resp. $q'e + r'$) is the Euclidean division of $v = 2 \lceil \frac{v}{2} \rceil$ (resp. $v+1 = 2\lfloor \frac{v}{2} \rfloor +1$) by $e$.

$a.$ If $r=0$, then $q'=q$ and $r' = r+1$, so that the Gram matrix of $\L_{v,\lm}$ becomes diag$(2e\pi^q, 2J_{q+1}^{(e-1)}, 2\mu_\lm J_{q+1}^{(e)})$. Thus we extract the rank-one sublattice $\langle 2e\pi^q \rangle$ and the $\p_\lm^{q+1}$-modular part, whose discriminant is 
\[
(-1)^{\lfloor \frac{e-1}{2} \rfloor + \lfloor \frac{e}{2} \rfloor} (2e)^{2e -1} \mu_\lm^{e} \pi^{(2e-1)(q+1)} \equiv (-\mu_\lm)^{e-1}2e\mu_\lm \pi^{(2e-1)(q+1)} \pmod{\O_\lm^{\times 2} },
\] 
since $\lfloor \frac{e-1}{2} \rfloor + \lfloor \frac{e}{2} \rfloor = e-1$. The other cases are treated similarly.


$b.$ If $r = 1$ and $e \geq 3$, then $q' = q$ and $r' = r+1$. In that case, the second summand is empty, \textit{i.e.,} $\L_{v,\lm}$ is $\p_\lm^{q+1}$-modular. Note that $(r,e) = (1,2)$ is incompatible with the assumption that $v$ is even. If $e \geq 3$ and $1 < r < e-1$, then $q'=q$ and $r' = r+1$. Finally if $e \geq 3$ and $r = e-1$, then $q'=q+1$ and $r'=0$.
\\ 

$3.$ We choose $\mu_\lm$ as in $2.$ Since $v$ is odd, we have $2\lceil \frac{v}{2} \rceil = v +1$ and $2\lfloor \frac{v}{2} \rfloor + 1 = v$. Hence, the situation is similar to $2.$, but the roles of $v$ and $v+1$ are swapped.
\end{demo}

\subsection{Over the unramified extension}\label{sec:unr}

Recall that $\lm/\lf$ denotes an unramified extension of local fields, with degree $f$. The following proposition, adapted from \cite[Lemma 1.4]{mantillagenus}, shows how, from a splitting of $\L_{\ida, \lm}$ over $\O_\lm$, one can derive a splitting over $\O_\lf$, \textit{i.e.,} a splitting of the $\O_\lf$-lattice
$$
\L_{\ida, \lf} = \Big(\ida \otimes \O_\lk, \tr_{\lk/\lf} \circ \tr_{E_\P/\lk}(x\overline{y})\Big).
$$

\begin{lemma}\label{prop:interm1}
Let $(\L,q)$ be a quadratic $\O_\lm$-lattice. Assume that $\p$ is non-dyadic and that there exist $r \in \Z_{\geq 0}$ and $\alpha_0, \dots, \alpha_{r-1} \in \O_\lm$ such that $(\L,q) \simeq_{\O_\lm} \langle \alpha_0, \dots, \alpha_{r-1} \rangle.$ Over $\O_\lf$, we have
\[
(\L, \tr_{\lm/\lf} \circ q) \simeq_{\O_\lf} \bigperp_{i=0}^{r-1} \left(\O_\lm, q_{\lm/\lf}^{\alpha_i}\right),
\]
where $q_{\lm/\lf}^{\alpha_i}$ is the scaled traced form defined by $q_{\lm/\lf}^{\alpha_i}(x) = \tr_{\lm/\lf}(\alpha_i x^2).$
\end{lemma}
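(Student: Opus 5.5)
This lemma is essentially formal: transport an orthogonal basis over $\O_\lm$ into an orthogonal decomposition over $\O_\lf$. By hypothesis there is a basis $x_0,\dots,x_{r-1}$ of $\L$ over $\O_\lm$ with $q(x_i,x_j)=\delta_{ij}\alpha_i$. This gives a direct sum decomposition of $\O_\lm$-modules
\[
\L=\bigoplus_{i=0}^{r-1}\O_\lm x_i .
\]
Each summand $\O_\lm x_i$ is a free $\O_\lf$-module of rank $f$, because $\O_\lm$ is free over $\O_\lf$ (for instance, $\lm/\lf$ is unramified, so $\O_\lm=\O_\lf[\theta]$). So the decomposition is also a decomposition into $\O_\lf$-lattices. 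We then equip $\L$ with the $\O_\lf$-valued symmetric bilinear form $\tr_{\lm/\lf}\circ q$ and check two things: that the summands are orthogonal, and that each summand is isometric to $(\O_\lm,q^{\alpha_i}_{\lm/\lf})$.

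For orthogonality, let $i\neq j$ and $\lambda,\mu\in\O_\lm$. By $\O_\lm$-bilinearity,
\[
\tr_{\lm/\lf}\bigl(q(\lambda x_i,\mu x_j)\bigr)=\tr_{\lm/\lf}\bigl(\lambda\mu\, q(x_i,x_j)\bigr)=0 .
\]
For the isometry, the map $\O_\lm\to\O_\lm x_i$, $\lambda\mapsto\lambda x_i$, is an $\O_\lf$-linear bijection, and
\[
\tr_{\lm/\lf}\bigl(q(\lambda x_i,\mu x_i)\bigr)=\tr_{\lm/\lf}(\alpha_i\lambda\mu).
\]
This is exactly the bilinear form attached to $q^{\alpha_i}_{\lm/\lf}(x)=\tr_{\lm/\lf}(\alpha_i x^2)$. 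Here we identify a quadratic form with its associated bilinear form, following the paper's convention $\langle a\rangle$ for the form $ax^2$. Putting the summands together gives the claimed orthogonal sum.

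The only point needing a little care is bookkeeping rather than mathematics. One must make sure that ``$\simeq_{\O_\lm}$'' is transported correctly: an $\O_\lm$-isometry $\L\to\langle\alpha_0,\dots,\alpha_{r-1}\rangle$ remains an isometry after composing both forms with $\tr_{\lm/\lf}$, since it is in particular $\O_\lf$-linear. So it suffices to treat the diagonal lattice $\bigoplus\O_\lm e_i$ itself. The non-dyadic hypothesis on $\p$ is not needed for this formal splitting. It matters only later, when each $(\O_\lm,q^{\alpha_i}_{\lm/\lf})$ is classified via \eqref{eq:nondyadic-modular-classification}, because $\tr_{\lm/\lf}$ is then a perfect pairing on $\O_\lm$ for the unramified extension. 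I expect no genuine obstacle here.
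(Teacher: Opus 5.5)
Your proof is correct and follows essentially the same route as the paper. Both arguments take an orthogonal $\O_\lm$-basis, identify each rank-one summand $\O_\lm x_i$ with $(\O_\lm, q^{\alpha_i}_{\lm/\lf})$ via $\lambda \mapsto \lambda x_i$, and note that applying the trace to the vanishing cross terms preserves orthogonality; you are also right that the non-dyadic hypothesis is not used here.
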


\begin{demo}
By assumption, there is a basis  $\{ \vec{v}_0, \dots, \vec{v}_{r-1} \}$ of $\L$ over $\O_\lm$ such that $(\L,q)$ is the orthogonal sum of $(V_i,q) := (\vec{v}_i \O_\lm, q)$. For $0 \leq i \leq r-1$ and each element $\vec{v} = \vec{v}_i x \in V_i$, note that $\tr_{\lm/\lf}(q(\vec{v})) = \tr_{\lm/\lf}(x^2 q(\vec{v}_i)) = \tr_{\lm/\lf}(\alpha_i x^2).$  Thus, $(V_i, \tr_{\lm/\lf} \circ q) \simeq_{\O_{\lf}} (\O_\lm, q_{\lm/\lf}^{\alpha_i})$. The orthogonality of the \(V_i\)'s is preserved, since applying
\(\tr_{\lm/\lf}\) to the vanishing cross-terms again gives zero.
\end{demo}

\begin{lemma}\label{prop:decomp_unramified}
Assume that $a\in \O_\lm ^\times$ is a unit. Then, $(\O_{\hf}, {q_{\hf/\hk}^{a}})$ is a unimodular $\O_\lf$-lattice of rank $f$, and we have the isometry
\begin{align*}
(\O_{\hf}, {q_{\hf/\hk}^{a}}) \simeq_{\O_{\hk}} \langle 1, \dots, 1, \norm_{\hf/\hk}(a) \cdot d_{\lm/\lf} \rangle,
\end{align*}
where $d_{\lm/\lf}$ denotes the discriminant of the local extension $\lm/\lf$.
\end{lemma}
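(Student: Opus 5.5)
The plan is to work with an integral basis of $\O_\lm$ over $\O_\lf$. Since $\lm/\lf$ is unramified of degree $f$, $\O_\lm$ is free of rank $f$ over $\O_\lf$. Let $\{\omega_1,\dots,\omega_f\}$ be such a basis. The Gram matrix of $q^{a}_{\lm/\lf}$ in this basis is
\[
G=\bigl(\tr_{\lm/\lf}(a\,\omega_i\omega_j)\bigr)_{1\le i,j\le f}.
\]

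\textbf{Unimodularity.} Since $a\in\O_\lm$, all entries of $G$ lie in $\O_\lf$, so the lattice is integral and its scale is contained in $\O_\lf$. To compute the determinant, I will factor $G=M^{T}\,\mathrm{diag}(\sigma(a))\,M$, where $M=(\sigma(\omega_j))_{\sigma,j}$ is built from the $f$ embeddings of $\lm$ over $\lf$. This gives
\[
\det G=\norm_{\lm/\lf}(a)\cdot\det(M)^2=\norm_{\lm/\lf}(a)\,d_{\lm/\lf},
\]
where $d_{\lm/\lf}=\det\bigl(\tr_{\lm/\lf}(\omega_i\omega_j)\bigr)$ is the discriminant, well defined modulo $\O_\lf^{\times2}$. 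Because $\lm/\lf$ is unramified, $d_{\lm/\lf}$ is a unit, and $\norm_{\lm/\lf}(a)$ is a unit since $a$ is. Hence $\det G\in\O_\lf^\times$, so $G$ is invertible over $\O_\lf$ and the lattice equals its dual. That is, it is unimodular ($\p^0$-modular) of rank $f$.

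\textbf{Classification.} With unimodularity established, I will apply the non-dyadic classification \eqref{eq:nondyadic-modular-classification} with $s=0$ and $m=f$. It gives
\[
(\O_\lm,q^a_{\lm/\lf})\simeq_{\O_\lf}\langle 1,\dots,1,\disc\rangle,
\]
with $\disc=\norm_{\lm/\lf}(a)\,d_{\lm/\lf}$ modulo $\O_\lf^{\times2}$, which is exactly the claimed isometry.

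The only point needing care is the ambient hypothesis. The classification \eqref{eq:nondyadic-modular-classification} requires $\lf$ non-dyadic, which holds in this section (as assumed in Lemma~\ref{prop:interm1}). I expect no real obstacle beyond checking that the determinant identity and the unit property of the unramified discriminant are stated correctly.
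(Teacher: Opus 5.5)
Your proposal is correct and follows the paper's proof. You compute the discriminant as $\norm_{\lm/\lf}(a)\,d_{\lm/\lf}$, observe that it is a unit because $\lm/\lf$ is unramified, deduce unimodularity, and then apply the non-dyadic classification \eqref{eq:nondyadic-modular-classification}. The only difference is that you justify the discriminant identity explicitly, by factoring the Gram matrix through the embeddings of $\lm$ over $\lf$; the paper simply states this identity.
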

\begin{demo}
We have the equality of discriminants 
$$
\mathrm{disc}(\O_{\hf}, {q_{\hf/\hk}^{a}}) = \norm_{\hf/\hk}(a) \cdot d_{\hf/\hk}.
$$ 
Since $\lm/\lf$ is unramified and $a$ is unit, we conclude that the discriminant of $(\O_{\hf}, {q_{\hf/\hk}^{a}})$ is a unit too, hence it is a unimodular lattice. The isometry is then a direct consequence of (\ref{eq:nondyadic-modular-classification}).
\end{demo}

\begin{remark}
In the lemma above, one can replace $d_{\lm/\lf}$ by $u_\p^{f-1}$, where $u_\p$ is any element of $\O_\lf^{\times}\setminus \O_\lf ^{\times 2}$ (\textit{c.f.}, \cite[Proposition 1.7]{mantillagenus}).
\end{remark}

\subsection{Gluing the decompositions}\label{sec:gluing-non-dyadic}

We carry on the notation introduced in Section~\ref{sec:totram}. Combining Lemma~\ref{prop:interm1} and Lemma~\ref{prop:decomp_unramified} with the splittings of $\L_{\ida,\lm}$ obtained in Section~\ref{sec:totram}, we derive a splitting over $\O_\lf$.

Since $\lm/\lf$ is unramified, there exists a unit $\nu_\lm \in \O_\lm^\times$ such that $\nu_\lm \cdot \pi$
is a uniformizer of $\lf$. We denote this uniformizer by $\pi_\lf$. We also define $\nu_\lf := \norm_{\lm/\lf}(\nu_\lm)$ and $\mu_\lf := \norm_{\lm/\lf}(\mu_\lm)$.
\subsubsection{Split case}

\begin{proposition}\label{glue-non-dyadic-split}
Assume that $\Pp$ is non-dyadic and that it splits in $E$. Let $v_1,v_2 \in \Z$ and write $v_1+v_2 = qe+r$ for the Euclidean division of $v_1+v_2$ by $e$. Then, $\L_{v_1,v_2,\lf}$ is
$\O_\lf$-isometric to one of the following.
\begin{arabiclist}
    \item  $\langle \pi_\lf^{q}\rangle
    \otimes
    \underbrace{\langle 1,\dots,1,(-1)^f\rangle}_{2f}
    \bigperp
    \langle \pi_\lf^{q+1}\rangle
    \otimes
    \underbrace{\langle 1,\dots,1,(-1)^{f(e-1)}\rangle}_{2f(e-1)}$, if $r=0;$

    \item $\langle \pi_\lf^{q+1}\rangle
    \otimes
    \underbrace{\langle 1,\dots,1,(-1)^{f(e-r+1)}\rangle}_{2f(e-r+1)}
    \bigperp
    \langle \pi_\lf^{q+2}\rangle
    \otimes
    \underbrace{\langle 1,\dots,1,(-1)^{f(r-1)}\rangle}_{2f(r-1)}$, if~$r>~0$.
\end{arabiclist}
\end{proposition}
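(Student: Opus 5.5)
We start from Proposition~\ref{prop:totram-split-nondyadic}, which gives a diagonal splitting of $\L_{v_1,v_2,\lm}$ over $\O_\lm$ into Jordan blocks $\langle\pi^{s}\rangle\otimes\langle 1,\dots,1,\varepsilon\rangle$ with $\varepsilon=\pm1$ and $s\in\{q,q+1,q+2\}$. It is more convenient to use the intermediate form from its proof: each block is an orthogonal sum of copies of $J_s^{(2)}$. Over a non-dyadic ring, $J_s^{(2)}$, the Gram matrix $\begin{psmallmatrix}0&e\pi^s\\ e\pi^s&0\end{psmallmatrix}$, is isometric to $\langle e\pi^s,-e\pi^s\rangle$, which is diagonal. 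Hence Lemma~\ref{prop:interm1} applies. In case $r=0$ we get $\L_{v_1,v_2,\lf}\simeq \bigperp_{i} (\O_\lm,q^{\alpha_i}_{\lm/\lf})$, where the $\alpha_i$ are $\pm e\pi^q$ (one pair) and $\pm e\pi^{q+1}$ ($e-1$ pairs). Case $r>0$ is analogous, with $e-r+1$ pairs at exponent $q+1$ and $r-1$ pairs at exponent $q+2$. The case $r=1$ is included, since then the $q+2$ part is empty.

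Next, write $\pi=\nu_\lm^{-1}\pi_\lf$ with $\pi_\lf\in\lf$. Then
\[
q^{\pm e\pi^s}_{\lm/\lf}(x)=\pi_\lf^s\,\tr_{\lm/\lf}(\pm e\nu_\lm^{-s}x^2)=\pi_\lf^s\, q^{\pm e\nu_\lm^{-s}}_{\lm/\lf}(x),
\]
so each summand is $\langle\pi_\lf^s\rangle\otimes(\O_\lm,q^{u}_{\lm/\lf})$ with $u=\pm e\nu_\lm^{-s}$ a unit. By Lemma~\ref{prop:decomp_unramified}, this factor is unimodular of rank $f$ and isometric to $\langle1,\dots,1,\norm_{\lm/\lf}(u)d_{\lm/\lf}\rangle$. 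Grouping summands by the exponent $s$ gives $\p^s$-modular Jordan components. Their ranks are $2f$ and $2f(e-1)$ when $r=0$, and $2f(e-r+1)$ and $2f(r-1)$ when $r>0$, as claimed.

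It remains to compute the discriminant of each component and invoke \eqref{eq:nondyadic-modular-classification}. A pair $u=e\nu_\lm^{-s}$, $u'=-e\nu_\lm^{-s}$ contributes, up to the power of $\pi_\lf$,
\[
\norm(u)\norm(u')\,d_{\lm/\lf}^2=(-1)^f\,\norm(e\nu_\lm^{-s})^2\,d_{\lm/\lf}^2\equiv(-1)^f \pmod{\O_\lf^{\times2}}.
\]
This holds because $\norm_{\lm/\lf}(-1)=(-1)^f$, since $-1\in\lf$. With $k$ pairs the unit part of the discriminant is therefore $(-1)^{fk}$, which gives exactly the last entries $(-1)^f$, $(-1)^{f(e-1)}$, $(-1)^{f(e-r+1)}$ and $(-1)^{f(r-1)}$. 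This squares-cancellation is the main point to check. Note in particular that the unknown units $\nu_\lf$ and $d_{\lm/\lf}$ disappear only because the blocks come in $\pm$ pairs. Working with the explicit pairs $J_s^{(2)}$, rather than the already-classified diagonal forms of Proposition~\ref{prop:totram-split-nondyadic}, is what makes this pairing visible.
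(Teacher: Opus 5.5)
Your proposal is correct and follows essentially the paper's proof: you descend the $\O_\lm$-splitting of Proposition~\ref{prop:totram-split-nondyadic} summand by summand via Lemma~\ref{prop:interm1} and Lemma~\ref{prop:decomp_unramified}, observe that the factors $d_{\lm/\lf}$ and $\norm_{\lm/\lf}(\nu_\lm^{-s})$ occur an even number of times at each scale so that only $(-1)^{fk}$ survives modulo squares, and then apply \eqref{eq:nondyadic-modular-classification}. Your explicit $\pm$-pair bookkeeping simply makes the paper's one-line ``even rank'' remark precise; note, though, that it is the evenness of the number of rank-one summands per scale, not the sign pairing, that removes $d_{\lm/\lf}$ and $\nu_\lf$, and this is just as visible from the already-classified diagonal forms.
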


\begin{demo}
We combine Proposition~\ref{prop:totram-split-nondyadic} to obtain a Jordan decomposition over
$\O_\lm$ with Lemma~\ref{prop:interm1} and Lemma~\ref{prop:decomp_unramified}
on each rank-one summand. Since all resulting modular blocks have even rank, the
unit factors arising from $d_{\lm/\lf}$ and $\nu_\lf$ are squares in $\O_\lf^\times$,
and the claim follows from the non-dyadic classification.
\end{demo}

\subsubsection{Inert or ramified case}

\begin{proposition}\label{glue-non-dyadic-inert-ramified}
Assume that $\Pp$ is non-dyadic and that it is either inert or ramified in $E$. Let $v \in \Z$ and $\varepsilon \in \{0, 1 \}$ such that $v \equiv \varepsilon \pmod{2}$. Then, $\L_{v,\lf}$ is
$\O_\lf$-isometric to one of the following.
\begin{arabiclist}
    \item Assume that $a \notin \Pp$, and let $qe + r$ be the Euclidean
    division of $2v$ by $e$.
    \begin{alphlist}
        \item $
        \langle \pi_\lf^q \rangle \otimes
        \underbrace{\langle 1, \dots, 1, \mu_\lf \rangle}_{2f}
        \bigperp
        \langle \pi_\lf^{q+1} \rangle \otimes
        \underbrace{\langle 1, \dots, 1, \mu_\lf^{e-1} \rangle}_{2f(e-1)}$,
        if $r=0;$


        \item $
        \langle \pi_\lf^{q+1} \rangle \otimes
        \underbrace{\langle 1, \dots, 1, \mu_\lf^{e-r+1} \rangle}_{2f(e-r+1)}
        \bigperp
        \langle \pi_\lf^{q+2} \rangle \otimes
        \underbrace{\langle 1, \dots, 1, \mu_\lf^{r-1} \rangle}_{2f(r-1)}$,
        if $r>0$.\footnote{Note that in the case $r=1$, the second summand vanishes and $\L_{v,\lm}$ is $\p^{q+1}$-modular.}
    \end{alphlist}

    \item Assume that $a \in \Pp$, and let $qe + r$
    be the Euclidean division of $v$ by $e$.
    \begin{alphlist}
        \item $\begin{aligned}[t]
        \langle \pi_\lf^q \rangle \otimes
        &\underbrace{\langle 1,\dots,1,
        \nu_\lf^{-q}d_{\lm/\lf}(2e)^f\mu_\lf^\varepsilon\rangle}_{f} \\
        \bigperp\;
        &\langle \pi_\lf^{q+1} \rangle \otimes
        \underbrace{\langle 1,\dots,1,
        (-1)^{(e-1)f}\nu_\lf^{-(q+1)}d_{\lm/\lf}(2e)^f\mu_\lf^{e-\varepsilon}
        \rangle}_{f(2e-1)},
        \ \text{if } r=0;
        \end{aligned}$

        \item $\begin{aligned}[t]
        \langle \pi_\lf^{q+1} \rangle \otimes
        &\underbrace{\langle 1,\dots,1,
        (-1)^{(e-r)f}\nu_\lf^{-(q+1)}d_{\lm/\lf}(2e)^f\mu_\lf^{e-r+\varepsilon}
        \rangle}_{f(2(e-r)+1)} \\
        \bigperp\;
        &\langle \pi_\lf^{q+2} \rangle \otimes
        \underbrace{\langle 1,\dots,1,
        (-1)^{(r-1)f}\nu_\lf^{-(q+2)}d_{\lm/\lf}(2e)^f\mu_\lf^{r-\varepsilon}
        \rangle}_{f(2r-1)}, \
        \text{if } r>0.
        \end{aligned}$
    \end{alphlist}
\end{arabiclist}
\end{proposition}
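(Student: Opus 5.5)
The plan is to glue, exactly as in the split case (Proposition~\ref{glue-non-dyadic-split}): start from the $\O_\lm$-splittings of $\L_{v,\lm}$ in Proposition~\ref{prop:totram-inert-nondyadic}, write each Jordan block there as an explicit diagonal lattice $\langle \pi^s\alpha_1,\dots,\pi^s\alpha_m\rangle$ with $\alpha_j\in\O_\lm^\times$, and then descend each rank-one summand to $\O_\lf$.

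\textbf{Descent of one summand.} By Lemma~\ref{prop:interm1}, the descent of $\langle \pi^s\alpha\rangle$ is $(\O_\lm,q^{\pi^s\alpha}_{\lm/\lf})$. Write $\pi^s\alpha=\pi_\lf^s\nu_\lm^{-s}\alpha$. Since $\pi_\lf\in\lf$ pulls out of the trace, this lattice is $\langle\pi_\lf^s\rangle\otimes(\O_\lm,q^{\nu_\lm^{-s}\alpha}_{\lm/\lf})$. Lemma~\ref{prop:decomp_unramified} then identifies the second factor with $\langle 1,\dots,1,\nu_\lf^{-s}\norm_{\lm/\lf}(\alpha)\,d_{\lm/\lf}\rangle$, of rank $f$.

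\textbf{Assembling each block.} A $\p_\lm^s$-modular block of rank $m$ becomes a $\p^s$-modular $\O_\lf$-lattice of rank $fm$. By the non-dyadic classification~\eqref{eq:nondyadic-modular-classification}, it is determined by its discriminant. That discriminant is
\[
\nu_\lf^{-sm}\, d_{\lm/\lf}^{\,m}\, \norm_{\lm/\lf}\Bigl(\prod_j\alpha_j\Bigr)
\]
modulo $\pi_\lf^{sfm}$ and squares. The remaining work is bookkeeping with the rules $\norm_{\lm/\lf}(2e)=(2e)^f$, $\norm_{\lm/\lf}(-1)=(-1)^f$ and $\norm_{\lm/\lf}(\mu_\lm)=\mu_\lf$, discarding even powers.

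\textbf{Case 1 ($a\notin\P$).} All blocks have even rank $2(\cdot)$ and unit product $\mu_\lm^{k}$ up to squares. The factors $\nu_\lf^{-s}$ and $d_{\lm/\lf}$ therefore appear to even powers and disappear. This leaves $\mu_\lf^{k}$, which gives 1(a) and 1(b).

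\textbf{Case 2 ($a\in\P$).} The ranks are odd: $1$ and $2e-1$, or $2(e-r)+1$ and $2r-1$. Hence $\nu_\lf^{-s}d_{\lm/\lf}$ survives to the first power, producing the displayed factors. The parity $\varepsilon$ of $v$ decides which block carries the extra $\mu_\lm$ (Proposition~\ref{prop:totram-inert-nondyadic} cases 2 versus 3). I would unify this by writing the unit part as $(-\mu_\lm)^{k}2e\mu_\lm^{\varepsilon'}$. I would check, for instance, that $\mu^{e-1}\mu^{1-\varepsilon}\equiv\mu^{e-\varepsilon}$ modulo squares matches the statement, and likewise $(-\mu)^{e-r}\mu^{\varepsilon}$ gives $(-1)^{(e-r)f}\mu_\lf^{e-r+\varepsilon}$.

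\textbf{Main obstacle.} The main step I expect to be fiddly is this uniform treatment of both parities together with the boundary subcases $r=1$ and $r=e-1$, where a block is empty or $q$ shifts. I would treat them by reading off the discriminant directly from the Gram matrix of Lemma~\ref{lem:Gram-inert-nondyadic}, as in the proof of Proposition~\ref{prop:totram-inert-nondyadic}. I would also double-check that the sign powers $(-1)^{kf}$ come out correctly, using $\norm_{\lm/\lf}(-1)=(-1)^f$.
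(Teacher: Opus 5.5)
Your proposal is correct and is essentially the paper's own argument: you descend each rank-one summand of the $\O_\lm$-splitting from Proposition~\ref{prop:totram-inert-nondyadic} via Lemmas~\ref{prop:interm1} and~\ref{prop:decomp_unramified}, then track block discriminants. Even ranks in case 1 make $\nu_\lf$ and $d_{\lm/\lf}$ disappear, odd ranks in case 2 keep them to the first power, and the signs come from $\norm_{\lm/\lf}\bigl((-\mu_\lm)^t\bigr)=(-1)^{tf}\mu_\lf^t$. Your planned re-check of the boundary cases from the Gram matrix is harmless but not needed, since Proposition~\ref{prop:totram-inert-nondyadic} already covers them; note that in case 2 with $r=1$ the $\p^{q+2}$-block has rank $f(2r-1)=f$ and does not vanish.
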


\begin{demo}
1. Assume first that $a \notin \Pp$. By
Proposition~\ref{prop:totram-inert-nondyadic}, the lattice $\L_{v,\lm}$ is an
orthogonal sum of blocks of the form $\langle \pi^s \rangle \otimes \langle 1,\dots,1,\mu_\lm^m\rangle,$
with ranks $2$, $2(e-1)$, $2(e-r+1)$ and $2(r-1)$ according to the value of
$r$. After descending to $\O_\lf$, each such block becomes $\langle \pi_\lf^s \rangle \otimes \langle 1,\dots,1,\mu_\lf^m\rangle,$
because all these ranks are even, so the unit factors coming from
$d_{\lm/\lf}$ and $\nu_\lf$ are squares in $\O_\lf^\times$. This gives
\textit{1.a} and \textit{1.b}.
\\ \ \\
2. Assume now that $a \in \Pp$. By
Proposition~\ref{prop:totram-inert-nondyadic}, the lattice $\L_{v,\lm}$ is an
orthogonal sum of one or two modular blocks, whose last coefficients are either $2e\mu_\lm^\varepsilon$ or $(-\mu_\lm)^{e-1}2e\mu_\lm^{1-\varepsilon}$
when $r=0$, and $(-\mu_\lm)^{e-r}2e\mu_\lm^\varepsilon$ or
$(-\mu_\lm)^{r-1}2e\mu_\lm^{1-\varepsilon}$ when $r>0$. Applying
Lemma~\ref{prop:interm1} and Lemma~\ref{prop:decomp_unramified} to each
rank-one summand, and by definition of$\mu_\lf$ and $\nu_\lf$,
we obtain exactly the coefficients appearing in \textit{2.a} and \textit{2.b}.
Moreover, the signs come from $\norm_{\lm/\lf}\big((-\mu_\lm)^t\big)=(-1)^{tf}\mu_\lf^t.$ This proves the proposition.
\end{demo}

\subsubsection{Jordan decomposition in the non-dyadic case}

\begin{theorem}\label{thm:jordan-decomp-nondyadic}
Assume $\p$ is non-dyadic. Let $\ida \subset E$ be a non-zero fractional ideal, and for every prime ideal $\Pp \mid \p$, denote by $\ida_{\Pp} := \ida \otimes_{\O_K} \O_{K_\Pp}$ the $\Pp$-part of $\ida$.
Then, locally at $\p$, the ideal lattice $\L_{\ida, F}$ is isometric to
\[
\bigperp_{\Pp \mid \p}
\L_{\ida_{\Pp},F_\p},
\]
where, each factor $\L_{\ida_{\Pp},F_\p}$ is explicitly described as follows.
\begin{arabiclist}
    \item If $\Pp$ splits in $E$ and if
    $\ida_{\Pp}=\Qq_1^{v_1}\Qq_2^{v_2}$, then
    $\L_{\ida_{\Pp},F_\p}=\L_{v_1,v_2,F_\p}$ and its splitting is given by Proposition~\ref{glue-non-dyadic-split}.

    \item If $\Pp$ is either inert or ramified in $E$ and if $\ida_{\Pp}=\Qq^v$, then
    $\L_{\ida_{\Pp},F_\p}=\L_{v,F_\p}$ and its splitting is
given by Proposition~\ref{glue-non-dyadic-inert-ramified}.
\end{arabiclist}

Additionally, a Jordan decomposition of $\L_{\ida,F}$ at $\p$ is obtained by
grouping together, for each integer $s$, the $\p^s$-modular blocks arising
in the decompositions of the lattices $\L_{\ida_\Pp,F_\p}$, as $\Pp$ ranges over
the prime ideals above $\p$.
\end{theorem}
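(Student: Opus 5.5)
The theorem essentially assembles the earlier results, and the proof will proceed in three steps.

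\emph{Step 1: localize and split over the primes above $\p$.} The completion of $\L_{\ida,F}$ at $\p$ is $(\ida\otimes_{\O_F}\O_{F_\p},h_{E_\p/F_\p})$. By Lemma~\ref{lemma:orthoplit}, it is the orthogonal sum over $\Pp\mid\p$ of the lattices $(\ida\otimes_{\O_K}\O_{K_\Pp},h_{E_\Pp/F_\p})$, and these are by definition the $\L_{\ida_\Pp,F_\p}$. I will check that $\ida\otimes_{\O_K}\O_{K_\Pp}$ depends only on the $\Pp$-part of the factorization of $\ida$. Primes of $\O_E$ not above $\Pp$ become units in $\O_E\otimes_{\O_K}\O_{K_\Pp}\simeq\prod_{\Qq\mid\Pp}\O_{E_\Qq}$. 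Hence $\ida_\Pp=\Qq_1^{v_1}\Qq_2^{v_2}$ in the split case and $\Qq^v$ otherwise, with the $v$'s the $\Qq$-adic valuations of $\ida$. This is exactly the reduction already noted before Lemma~\ref{lem:split}.

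\emph{Step 2: identify each factor.} Using transitivity of the trace,
\[
\tr_{E_\Pp/\lf}=\tr_{\lm/\lf}\circ\tr_{\lk/\lm}\circ\tr_{E_\Pp/\lk},
\]
the lattice $\L_{\ida_\Pp,F_\p}$ coincides with $\L_{v_1,v_2,\lf}$, respectively $\L_{v,\lf}$, as defined in Section~\ref{sec:unr}. Propositions~\ref{glue-non-dyadic-split} and~\ref{glue-non-dyadic-inert-ramified} then give the stated splittings in cases 1 and 2. These propositions rest on the following chain:
\begin{itemize}
\item Lemmas~\ref{lem:split} and~\ref{lem:inert-ram-nondyadic} over $\lk$;
\item Propositions~\ref{prop:totram-split-nondyadic} and~\ref{prop:totram-inert-nondyadic} over $\lm$;
\item Lemmas~\ref{prop:interm1} and~\ref{prop:decomp_unramified} for the unramified descent.
\end{itemize}
The standing hypothesis that $K/F$ is tamely ramified at $\p$ is what makes Section~\ref{sec:totram} applicable, and $\p$ non-dyadic implies that every $\Pp\mid\p$ is non-dyadic.

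\emph{Step 3: obtain a Jordan decomposition.} Each factor is an explicit orthogonal sum of $\p^s$-modular blocks for various $s$. An orthogonal sum of $\p^s$-modular lattices is again $\p^s$-modular, since duals and scales are compatible with orthogonal sums: $(\L\perp\M)^\sharp=\L^\sharp\perp\M^\sharp$. So grouping all blocks of a given scale $s$, across all $\Pp\mid\p$, yields one $\p^s$-modular component. Ordering these components by increasing $s$ gives a Jordan decomposition.

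The main obstacle is bookkeeping rather than new mathematics. One must make sure that the uniformizer $\pi_\lf$ and the units $\nu_\lf$, $\mu_\lf$ are chosen consistently when blocks coming from different $\Pp$ (with different $e,f$) are combined. Scales are intrinsic, so the grouping itself is unaffected. However, the discriminant of a combined component is the product of the block discriminants. By \eqref{eq:nondyadic-modular-classification}, this product then determines the component up to isometry.
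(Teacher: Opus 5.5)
Your proposal is correct and follows the same route as the paper: Lemma~\ref{lemma:orthoplit} splits the completion into the factors $\L_{\ida_\Pp,F_\p}$, Propositions~\ref{glue-non-dyadic-split} and~\ref{glue-non-dyadic-inert-ramified} describe each factor, and blocks of the same scale are then grouped. You also make explicit two points the paper's short proof leaves implicit. The first is your Step~3 observation that an orthogonal sum of $\p^s$-modular lattices is again $\p^s$-modular. The second is your remark that the uniformizer $\pi_\lf$ must be fixed consistently across all $\Pp\mid\p$ before block discriminants are multiplied together.
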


\begin{demo}
Recall the orthogonal splitting from Lemma \ref{lemma:orthoplit}:
\[
\bigl(\ida \otimes_{\O_F} \O_{F_\p}, h_{E_\p/F_\p}\bigr)
\simeq_{\O_{F_\p}}
\bigperp_{\Pp \mid \p}
\bigl(\ida_{\Pp}, h_{E_\Pp/F_\p}\bigr).
\]
For each $\Pp \mid \p$, the $\Pp$-part $\ida_{\Pp}$ of $\ida$ determines the
corresponding factor. If $\Pp$ splits in $E$, then
$\ida_{\Pp}=\Qq_1^{v_1}\Qq_2^{\,v_2}$ and Proposition~\ref{glue-non-dyadic-split}
gives the Jordan decomposition of the associated lattice $\L_{v_1,v_2,F_\p}$.
If $\Pp$ is inert or ramified in $E$, then $\ida_{\Pp}=\Qq^v$ and
Proposition~\ref{glue-non-dyadic-inert-ramified} gives the Jordan decomposition
of the associated lattice $\L_{v,F_\p}$. Taking the orthogonal sum over all
$\Pp \mid \p$ yields the claimed decomposition.
\end{demo}

In general, Theorem~\ref{thm:jordan-decomp-nondyadic} does \emph{not} yield a
simple ``if and only if'' criterion for deciding whether two ideal lattices are locally isometric at
$\p$, based on their prime ideal factorization. The reason is that the orthogonal summands attached to different primes
$\Pp \mid \p$ may contribute Jordan blocks with the same scale, and
these components must then be merged before applying the
classification (\ref{eq:nondyadic-modular-classification}). In particular, the total $\p^s$-modular part of
$\ida \otimes_{\O_F} \O_{F_\p}$ may arise from several distinct blocks, so that the relevant invariants can't be read off prime by
prime.

In other words, the obstruction comes from the possible overlap between the sets
of scales appearing in the various factors $\L_{\ida_\Pp,F_\p}$. To simplify the
combinatorics, we now restrict to the case where the contributions of the
different primes $\Pp \mid \p$ occur at disjoint scales. This motivates the
following definition and notation.

\begin{definition}\label{defi:scale-set}
Let $\ida \subset E$ be a non-zero fractional ideal, and let $\p$ be a prime ideal of $\O_F$. For each prime divisor $\Pp \mid \p$ in $\O_K$, we denote by $\mathrm{Sc}_\Pp(\ida)$ the set of integers $s$ such that a $\p^s$-modular block appears in the decomposition of $\L_{\ida_\Pp,F_\p}$.
\end{definition}

\begin{remark}\label{rmk:u_p(a)}
Thanks to Proposition \ref{glue-non-dyadic-split} and Proposition \ref{glue-non-dyadic-inert-ramified}, one can easily describe $\mathrm{Sc}_\Pp(\ida)$ in terms of the valuations of $\ida$. 
Indeed, define the integer $u_\Pp(\ida)$ by
\[
u_\Pp(\ida):=
\begin{cases}
v_{\Qq_1}(\ida)+v_{\Qq_2}(\ida),
& \text{if } \Pp\O_E=\Qq_1\Qq_2 \text{ is split in }E;\\[1ex]
2\,v_\Qq(\ida)
& \text{if } \Pp\O_E=\Qq \text{ is inert in }E;\\[1ex]
v_\Qq(\ida),
& \text{if } \Pp\O_E=\Qq^2 \text{ is ramified in }E.
\end{cases}
\]
Moreover, write $u_\Pp(\ida)=q_\Pp e_\Pp +r_\Pp$
for the Euclidean division of $u_\Pp(\ida)$ by the ramification index $e_\Pp$ at $\P \mid \p$. Then, the set of scales
contributed by $\Pp$ is
\[
\mathrm{Sc}_\Pp(\ida) =
\begin{cases}
\{q_\Pp,q_\Pp+1\},
& \text{if } r_\Pp=0;\\[1ex]
\{q_\Pp+1\},
& \text{if } r_\Pp=1 \text{ and } \Pp \text{ is split or inert in }E;\\[1ex]
\{q_\Pp+1,q_\Pp+2\},
& \text{otherwise.}
\end{cases}
\]
\end{remark}

Thus, for each prime $\Pp\mid\p$, the possible scales contributed by
$\L_{\ida_\Pp,F_\p}$ are completely encoded by the finite set
$\mathrm{Sc}_\Pp(\ida)$. The situation in which these sets are pairwise
disjoint is precisely the one where no merging of Jordan blocks can occur
between distinct primes above $\p$. This motivates the following definition.

\begin{definition}\label{defi:p-separated}
We say that $\ida$ is \emph{$\p$-separated} if, for every  $\Pp' \neq \Pp$ above~$\p$,
\[
\mathrm{Sc}_\Pp(\ida)\cap \mathrm{Sc}_{\Pp'}(\ida)=\varnothing.
\]
\end{definition}

Before turning this into a usable local criterion, let us first illustrate the
condition with a few examples of \(\p\)-separated ideals.

\begin{example}
The condition of being $\p$-separated is satisfied when the ideal
$\ida$ is supported, above $\p$, at only one prime ideal, provided that this
nontrivial contribution is far enough from the trivial ones. 

For instance,
assume that there are exactly two prime ideals $\Pp_1,\Pp_2$ of $\O_K$ above
$\p$, both with ramification index \(e_{\Pp_i}=1\) over \(\p\). Suppose that
\[
u_{\Pp_1}(\ida)=0
\qquad\text{and}\qquad
u_{\Pp_2}(\ida)=N,
\]
for some integer \(N\geq 2\). Then, by Remark~\ref{rmk:u_p(a)}, one has $\mathrm{Sc}_{\Pp_1}(\ida)=\{0,1\}$ and $\mathrm{Sc}_{\Pp_2}(\ida)=\{N,N+1\}.$
These two sets are disjoint, and hence \(\ida\) is \(\p\)-separated.
\end{example}

\begin{example}
We now give a less degenerate example. Let $F=\Q, K=\Q(\sqrt{5})$ and $E=K(i)$ and take \(p=11\). Since \(5\) is a square modulo \(11\), the prime \(11\)
splits in \(K\): $11\O_K=\Pp_1\Pp_2.$
Moreover, the residue fields at \(\Pp_1\) and \(\Pp_2\) are both isomorphic to
\(\mathbb{F}_{11}\). Since \(-1\) is not a square modulo \(11\), the primes
\(\Pp_1\) and \(\Pp_2\) are inert in \(E/K\). We write $\Pp_i\O_E=\Qq_i,$ for $i \in \{1,2\}$.
Let
\[
\ida=\Qq_1\Qq_2^3.
\]
Then $u_{\Pp_1}(\ida)=2v_{\Qq_1}(\ida)=2$ and $u_{\Pp_2}(\ida)=2v_{\Qq_2}(\ida)=6.$
Since \(e_{\Pp_1}=e_{\Pp_2}=1\), the Euclidean divisions by \(e_{\Pp_i}\)
have remainder \(0\). By Remark~\ref{rmk:u_p(a)}, we get
\[
\mathrm{Sc}_{\Pp_1}(\ida)=\{2,3\},
\qquad
\mathrm{Sc}_{\Pp_2}(\ida)=\{6,7\}.
\]
These two sets are disjoint. Thus \(\ida\) is \(11\)-separated.
\end{example}

Assume now that both $\ida$ and $\idb$ are $\p$-separated. Then the overlap
phenomenon described above disappears: By definition, for each of the two ideals,
every scale $\p^s$ occurs in at most one factor $\L_{\ida_\Pp,F_\p}$ (resp.
$\L_{\idb_\Pp,F_\p}$). Hence, after applying Theorem~\ref{thm:jordan-decomp-nondyadic},
deciding whether $\L_{\ida,F}$ and $\L_{\idb,F}$ are locally isometric at $\p$
amounts to comparing the Jordan decompositions of the factors
$\L_{\ida_\Pp,F_\p}$ and $\L_{\idb_{\Pp'},F_\p}$ prime by prime (where $\Pp, \Pp' \mid \p)$, up to a suitable
reordering of the primes above $\p$. 

Moreover, Proposition~\ref{glue-non-dyadic-inert-ramified}
shows that the ramified primes are the only ones contributing modular blocks whose
ranks are of the form $f \cdot (2m+1)$, whereas Propositions~\ref{glue-non-dyadic-split}
and~\ref{glue-non-dyadic-inert-ramified} show that the split and inert primes
contribute only blocks of rank $f \cdot 2m$. In particular, the ramified factors
must necessarily be paired with ramified factors. 

On the other hand, in the split and
inert cases, the possible ranks are the same. The distinction therefore lies in
the discriminant classes of the corresponding modular blocks, namely
$(-1)^f$ in the split case and
$\mu_F$ in the inert case. The following lemma determines precisely when these two coincide modulo $\O_\lf^{\times 2}$. 

\begin{lemma}\label{lem:muF-minusone}
Assume that $\Pp \mid \p$ is a non-dyadic prime ideal which is inert in $E$, and let $\mu_\lm \in \O_\lm^\times$ be as in Lemma~\ref{lem:Gram-inert-nondyadic}, namely such that $a^{-1}\mu_\lm \in \O_\lk^{\times 2}.$ 
Recall that $\mu_\lf$ is defined by $\mu_\lf = \norm_{\lm/\lf}(\mu_\lm)$. Then we have the equivalence between
\begin{arabiclist}
\item $\mu_\lf \equiv (-1)^f \pmod{\O_\lf^{\times 2}};$
\item $f \text{ is even}.$
\end{arabiclist}
\end{lemma}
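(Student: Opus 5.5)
The plan is to reduce both sides of the equivalence to statements about the residue field, where everything is governed by the norm map of finite fields.

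\emph{Step 1: reduce to residue classes.} Since $\Pp$ is inert in $E$, Lemma~\ref{lem:inert-ram-nondyadic} gives $a\notin\Pp$. Thus $a\in\O_\lk^\times$, and $E_\Pp=\lk(\sqrt{-a})$ is the unramified quadratic extension of $\lk$. Because $\Pp$ is non-dyadic, Hensel's lemma (as in the footnote to Lemma~\ref{lem:replace-alpha}) shows that a unit of $\lk$ is a square if and only if its reduction is a square in the residue field. Hence $-a$ reduces to a non-square in the residue field $k$ of $\lk$. Since $\lk/\lm$ is totally ramified, $k$ is also the residue field of $\lm$. As $\mu_\lm\equiv a \pmod{\O_\lk^{\times 2}}$, the reduction $\overline{-\mu_\lm}$ is therefore a non-square in $k^\times$.

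\emph{Step 2: push through the unramified extension.} Let $k_\lf$ be the residue field of $\lf$, so that $[k:k_\lf]=f$. Since $\lm/\lf$ is unramified, the norm commutes with reduction: $\overline{\norm_{\lm/\lf}(x)}=\norm_{k/k_\lf}(\overline x)$ for every $x\in\O_\lm^\times$. For finite fields of odd characteristic, $\norm_{k/k_\lf}$ is surjective, and it maps $k^{\times2}$ into $k_\lf^{\times2}$. Both quotient groups $k^\times/k^{\times2}$ and $k_\lf^\times/k_\lf^{\times2}$ have order $2$, so the norm induces an isomorphism between them. It follows that $\norm_{\lm/\lf}(-\mu_\lm)=(-1)^f\mu_\lf$ reduces to a non-square, and hence, by Hensel again, that
\[
\mu_\lf\equiv(-1)^f\,u_\p \pmod{\O_\lf^{\times2}}, \qquad u_\p\in\O_\lf^\times\setminus\O_\lf^{\times2}.
\]

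\emph{Step 3: compare with $(-1)^f$.} With Step 2 in hand, condition 1 of the lemma becomes a statement about the square class of a single explicit unit in $\O_\lf^\times$. I would then finish by a case split on the parity of $f$, using that $(-1)^f$ is a square when $f$ is even, and that for $f$ odd its class is that of $-1$, i.e.\ it depends on $|k_\lf|\bmod 4$.

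\emph{Main obstacle.} I expect the hard part to be Step 3, namely matching the class obtained in Step 2 with the class of $(-1)^f$ exactly as the statement requires. Read literally, Step 2 gives $\mu_\lf(-1)^f\equiv u_\p$, a non-square, and this would make condition 1 fail for every $f$. Before finalizing, I would therefore recheck two things. First, the normalization of $\mu_\lm$: whether it is meant to represent $a$ or $-a$ modulo $\O_\lk^{\times2}$. Second, the comparison the paper actually needs downstream: the discriminants of the rank-$2f(\cdot)$ blocks in Propositions~\ref{glue-non-dyadic-split} and~\ref{glue-non-dyadic-inert-ramified}, where powers $\mu_\lf^{m}$ rather than $\mu_\lf$ itself appear. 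The aim is to align the sign conventions so that the parity of $f$ is exactly what decides the equivalence. This reconciliation is the step I would work out first.
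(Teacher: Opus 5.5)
Your Steps 1 and 2 are correct, and they settle the question against the statement. They show that $(-1)^f\mu_\lf=\norm_{\lm/\lf}(-\mu_\lm)$ is a non-square unit of $\O_\lf$ for every $f$, i.e.\ condition 1 never holds when $\Pp$ is inert. So (1)$\Rightarrow$(2) holds only vacuously, and (2)$\Rightarrow$(1) is false. Step 3 therefore cannot be completed, and there is no convention to reconcile. If $\mu_\lm$ represented $-a$ instead, condition 1 would fail for even $f$ and, for odd $f$, would depend on $|k_\lf| \bmod 4$.

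A concrete counterexample:
\begin{itemize}
\item Take $F=\Q$, $K=\Q(\sqrt5)$ and $a=(5+\sqrt5)/2$. It is totally positive with $\norm_{K/\Q}(a)=5$, so $a\O_K=\sqrt5\,\O_K$ is prime.
\item The prime $3$ is inert in $K$, with $e=1$, $f=2$ and residue field $\mathbb{F}_9$.
\item $\norm_{\mathbb{F}_9/\mathbb{F}_3}(\bar a)=\bar 5=-\bar 1$ is a non-square, so $\bar a$ is a non-square in $\mathbb{F}_9$. Since $-1\in\mathbb{F}_9^{\times2}$, so is $-\bar a$, and $\Pp=3\O_K$ is inert in $E$.
\item Here $\lk=\lm$, one may take $\mu_\lm=a$, and $\mu_\lf=\norm_{K/\Q}(a)=5\notin\Z_3^{\times2}$, while $(-1)^f=1$.
\end{itemize}

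The paper's own proof goes wrong exactly where your Step 2 is careful. It asserts that the norm acts on the order-$2$ quotient by $x\mapsto x^f$, so that $[\mu_\lf]=[\mu_\lm]^f$ is trivial for even $f$. On $k_\lm^\times$ the norm is $x\mapsto x^{1+Q+\cdots+Q^{f-1}}$ with $Q=|k_\lf|$. This exponent is $\equiv f\pmod 2$, so for even $f$ every norm is a square \emph{in $k_\lm$}. But condition 1 concerns squares in $k_\lf$, and there the norm sends a generator of $k_\lm^\times$ to a generator of $k_\lf^\times$, hence induces an isomorphism on square classes. The rule $x\mapsto x^f$ describes only the norm restricted to $k_\lf^\times$. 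The odd-$f$ half of the paper's argument is fine.

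Checking the downstream use will not rescue the statement either. For $r=0$, the $\p^q$-modular blocks in Propositions~\ref{glue-non-dyadic-split} and~\ref{glue-non-dyadic-inert-ramified} have discriminants differing by $(-1)^f\mu_\lf$, which is a non-square for every $f$. So the even-$f$ merging of split and inert primes in Corollary~\ref{cor:local-criterion-nondyadic} rests on the false direction. What is actually true, and what your Steps 1--2 prove, is $\mu_\lf\not\equiv(-1)^f \pmod{\O_\lf^{\times2}}$ for all $f$.
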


\begin{demo}
Let $k_\lm$ and $k_\lf$ denote the residual fields of $\O_\lm$ and $\O_\lf$, respectively. Since $\P$ is non-dyadic, Hensel's lemma implies that both
$\O_\lm^\times/\O_\lm^{\times 2} \simeq k_\lm^{\times}/k_\lm^{\times 2}$ and $\O_\lf^\times/\O_\lf^{\times 2} \simeq k_\lf^{\times}/k_\lf^{\times 2}$ have order two. Moreover, the norm on $\lm/\lf$ induces 
\[
k_\lm^{\times}/k_\lm^{\times 2} \longrightarrow k_\lf^{\times}/k_\lf^{\times 2},
\]
which is therefore trivial or an isomorphism. Since $[k_\lm:k_\lf]=f$, this map is given on the quotient of order $2$ by $x \mapsto x^f$. Therefore, $[\mu_\lf]$ identifies with $[\mu_\lm]^f
\ \text{in }\O_\lf^\times/\O_\lf^{\times 2}.$
In particular, if $f$ is even, we obtain the claim
\[
\mu_\lf \equiv 1 \pmod{\O_F^{\times 2}}.
\]

Assume now that $f$ is odd. Then $[\mu_\lf]$ and $[\mu_\lm]$ coincide in $\O_\lf^\times/\O_\lf^{\times 2}$. Suppose for contradiction
that we have
\[
\mu_\lf \equiv (-1)^f = -1 \pmod{\O_\lf^{\times 2}}.
\]
This implies $\mu_\lm \equiv -1 \pmod{\O_\lm^{\times 2}}.$ But then by definition of $\mu_\lm$, we obtain
\[
\mu_\lm \equiv a \pmod{\O_\lk^{\times 2}}, \quad \text{hence} \ a \equiv -1 \pmod{\O_\lk^{\times 2}}.
\]
It follows that $-a$ is a square in $\lk$, so the quadratic extension
$K(\sqrt{-a})/K$ splits at $\P$, contradicting the assumption that $\P$ is
inert in $E$. This proves the result.
\end{demo}

\begin{corollary}[Non-dyadic criterion for $\p$-separated ideals]\label{cor:local-criterion-nondyadic}
Assume that $\p$ is non-dyadic, that $K/F$ is a Galois extension, and that the fractional ideals $\ida$ and $\idb$ are $\p$-separated (\textit{c.f.,} Definition \ref{defi:p-separated}).  Let $f$ denote the residual degree at $\p$,\footnote{Since $K/F$ is Galois, $f = [\O_K/\Pp : \O_F/\p]$ does not depend on the choice of $\Pp \mid \p$.} and recall the notation $u_{\Pp}(\ida)$ (\textit{c.f.,} Remark \ref{rmk:u_p(a)}).

Then, $\L_{\ida, F} \text{ and } \L_{\idb, F} \text{ are locally isometric at } \p$ if and only if:

\begin{arabiclist}
    \item If $f$ is odd, then the multisets
    \[
    \{u_{\Pp}(\ida)\}_{\Pp \text{ split in } E} \quad ; \quad  \{u_{\Pp}(\ida)\}_{\Pp \text{ inert in } E} \quad ; \quad  \{u_{\Pp}(\ida)\}_{\Pp \text{ ramified in } E}   
    \]
    coincide respectively with the corresponding multisets for $\idb$.

    \item If $f$ is even, then the multisets
    \[
    \{u_{\Pp}(\ida)\}_{\Pp \text{ split or inert in } E} \quad ; \quad  \{u_{\Pp}(\ida)\}_{\Pp \text{ ramified in } E}   
    \]
    coincide respectively with the corresponding multisets for $\idb$.
\end{arabiclist}
\end{corollary}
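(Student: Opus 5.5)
The plan is to reduce the local isometry question to a prime-by-prime comparison, using Theorem~\ref{thm:jordan-decomp-nondyadic} and the non-dyadic classification \eqref{eq:nondyadic-modular-classification}.

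\emph{Step 1: Jordan data are determined by the invariants.} Since $K/F$ is Galois, all $\Pp\mid\p$ share the same $e$ and $f$. By Remark~\ref{rmk:u_p(a)}, for each $\Pp$ the factor $\L_{\ida_\Pp,F_\p}$ is described by Propositions~\ref{glue-non-dyadic-split} and~\ref{glue-non-dyadic-inert-ramified}. Its scales, ranks and discriminant classes are functions of three pieces of data only: the splitting type of $\Pp$ in $E$, the integer $u_\Pp(\ida)$, and the unit classes $\mu_\lf,\nu_\lf,d_{\lm/\lf}$ attached to $\Pp$. When $K/F$ is Galois, I would check that these unit classes depend only on the splitting type of $\Pp$, because $\Pp\mapsto\sigma\Pp$ transports all local data. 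For ramified $\Pp$, the ranks $f(2(e-r)+1)$ and $f(2r-1)$ show that the block data are injective in $u_\Pp$: the pair of a scale and the rank of its block recovers $q$ and $r$. The same holds in the split and inert cases, using the scales and ranks listed in Remark~\ref{rmk:u_p(a)}.

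\emph{Step 2: sufficiency.} $\p$-separatedness means each $\p^s$-modular Jordan component of $\L_{\ida,F}$ comes from a single $\Pp$. Suppose the multisets agree, including merging split with inert when $f$ is even. Choose a bijection $\Pp\mapsto\Pp'$ between the primes for $\ida$ and for $\idb$ that matches types, or matches split/inert loosely when $f$ is even, and preserves $u$. Then the factors $\L_{\ida_\Pp,F_\p}$ and $\L_{\idb_{\Pp'},F_\p}$ are isometric. Within a single type this is immediate from Step~1. For a split prime paired with an inert one ($f$ even), all blocks have even rank, and the discriminants are $(-1)^{f\cdot m}$ and $\mu_\lf^{m}$. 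By Lemma~\ref{lem:muF-minusone} these are equal modulo squares exactly when $f$ is even. Summing orthogonally gives $\L_{\ida,F}\simeq\L_{\idb,F}$ at $\p$.

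\emph{Step 3: necessity.} Assume local isometry. By the uniqueness of scales, ranks and (non-dyadic) discriminants of Jordan components, the components of each scale $s$ agree. By separatedness, each component is a union over a partition of scales into the sets $\mathrm{Sc}_\Pp$. The main obstacle is recovering the partition itself from the Jordan invariants. I would argue as follows.
\begin{itemize}
\item Ramified primes are the only source of blocks of rank $f\cdot$odd, so their scale sets are identified.
\item For split or inert $\Pp$, $\mathrm{Sc}_\Pp$ is either a singleton $\{q+1\}$ (when $r=1$, of rank $2fe$) or a pair of consecutive scales whose ranks sum to $2fe$.
\end{itemize}
Scanning scales in increasing order, one would greedily group consecutive scales whose ranks sum to $2fe$. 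I expect that there is no ambiguity because every group has total rank exactly $2fe$ (respectively $2fe$ times $f$-normalized for the ramified case). This requires a careful check with the $r=0$ rank pattern $2f,\,2f(e-1)$ versus $2f(e-r+1),\,2f(r-1)$. Once the groups are recovered, Step~1's injectivity recovers each $u_\Pp$. When $f$ is odd, Lemma~\ref{lem:muF-minusone} shows that the discriminant class of the lowest block, $(-1)^f$ versus $\mu_\lf$, distinguishes split from inert. This yields equality of the stated multisets.
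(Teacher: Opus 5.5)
There is a genuine gap in the sufficiency direction (your Steps 1--2). The paper's own proof makes the same two assertions, and with them the ``if'' direction of the statement actually fails as written.

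\emph{First gap: the Galois-transport claim.} The claim that ``$\Pp\mapsto\sigma\Pp$ transports all local data'' is wrong for ramified primes. An element $\sigma\in\mathrm{Gal}(K/F)$ need not fix $a$, so it does not carry $(E_\Pp,h)$ to $(E_{\sigma\Pp},h)$. As a result, the square class of $\mu_\lm\equiv\Pi^{-1}a$ genuinely depends on $\Pp$. For split and inert primes your claim is fine, since the relevant classes are intrinsic (for inert $\Pp$, $[\mu_\lm]=[-1][u]$ with $u$ a nonsquare unit). Here is a counterexample:
\begin{itemize}
\item Take $F=\Q$, $K=\Q(\sqrt2)$, and $7\O_K=\Pp_1\Pp_2$ with $\Pp_{1,2}=(3\pm\sqrt2)$.
\item Take $a=7b$ with $b=5+2\sqrt2$. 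Since $b$ has norm $17$, $a$ is totally positive with squarefree ideal.
\item Both $\Pp_i$ ramify in $E$, with $e=f=1$. Moreover $b\equiv 6$ (a nonsquare) mod $\Pp_1$, while $b\equiv 4$ (a square) mod $\Pp_2$.
\item Write $\Qq_i^2=\Pp_i\O_E$. A direct computation over $\Z_7$ gives $\Qq_i\simeq\langle 14b_i\rangle\perp\langle 2\cdot 7^2\rangle$ and $\Qq_i^{10}\simeq\langle 2\cdot 7^{10}\rangle\perp\langle 2\cdot 7^{11}b_i\rangle$.
\end{itemize}
Hence $\ida=\Qq_1\Qq_2^{10}$ and $\idb=\Qq_1^{10}\Qq_2$ are $7$-separated and have the same ramified multiset $\{1,10\}$. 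Yet their $7\Z_7$-modular Jordan components $\langle 14b_1\rangle$ and $\langle 14b_2\rangle$ have different discriminants, so the lattices are not locally isometric.

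\emph{Second gap: Lemma~\ref{lem:muF-minusone}.} For $f$ even, your pairing of split with inert primes relies on Lemma~\ref{lem:muF-minusone}, which is false. Since $\lm/\lf$ is unramified, $\norm_{\lm/\lf}$ is surjective on units. It therefore induces an \emph{isomorphism} $\O_\lm^\times/\O_\lm^{\times 2}\to\O_\lf^\times/\O_\lf^{\times 2}$. The rule $x\mapsto x^f$ only describes this map on elements of $\lf$, and for $f$ even all of these are squares in $\lm$. Consequently $[\mu_\lf]=[(-1)^f]\,[u_\lf]\neq[(-1)^f]$ for every $f$. A counterexample to case~2:
\begin{itemize}
\item Take $K=\Q(\sqrt2,\sqrt5)$ and $p=3$, so $3\O_K=\Pp_1\Pp_2$ with $f=2$.
\item Take $a=10-\sqrt2-\sqrt5$, a totally positive prime element of norm $8609$.
\item Then $a\equiv 1$ mod $\Pp_1$, so $\Pp_1$ splits in $E$. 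Also $a\equiv 1+\sqrt2$ mod $\Pp_2$, whose residue norm is $2$, so $\Pp_2$ is inert.
\item The split unimodular factor has square discriminant. The inert one has discriminant $\equiv\norm(a)\equiv 2$, a nonsquare.
\end{itemize}
Hence $\Pp_2^5\O_E$ and $\Pp_1^5\O_E$ have the same merged multiset $\{0,10\}$ but are not locally isometric.

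\emph{What survives.} Your necessity argument (Step 3) is sound, and it is more careful than the paper's one-line ``no merging''. A scale set carrying two nonempty blocks has lower rank $<2fe$, so the lowest remaining scale decides singleton versus pair, and the ranks then recover $u_\Pp$. One small fix is needed when separating split from inert: use a block whose exponent is odd, not necessarily the lowest block. For example, when $e$ is odd and $r\geq 2$ is even, the lowest exponent $e-r+1$ is even. With that fix, this direction actually yields equality of all three multisets for every $f$. The converse direction, however, needs an extra hypothesis or an extra invariant, such as the square classes of $a\Pi^{-1}$ at the ramified primes.
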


\begin{proof}
By Theorem~\ref{thm:jordan-decomp-nondyadic}, one has orthogonal decompositions
\[
\L_{\ida, F_\p}
\simeq_{\O_{F_\p}}
\bigperp_{\Pp\mid\p}\L_{\ida_{\Pp},F_\p}
\qquad\text{and}\qquad
\L_{\idb, F_\p}
\simeq_{\O_{F_\p}}
\bigperp_{\Pp\mid\p}\L_{\idb_{\Pp},F_\p}.
\]
Since $\ida$ and $\idb$ are $\p$-separated, no merging occurs between distinct
local factors in the orthogonal decompositions given by
Theorem~\ref{thm:jordan-decomp-nondyadic}. Hence the two lattices are locally
isometric at $\p$ if and only if, after a suitable reordering of the primes
above $\p$, the factors $\L_{\ida_\Pp,F_\p}$ and $\L_{\idb_{\Pp'},F_\p}$ can be
matched pairwise.

At this point, the assumption that $K/F$ is Galois becomes essential. Indeed,
it implies that all primes $\Pp \mid \p$ have the same ramification index $e$
and the same residual degree $f$, and that the local extensions $K_\Pp/F_\p$
are all $F_\p$-isomorphic. Therefore, the explicit formulas of
Propositions~\ref{glue-non-dyadic-split} and
\ref{glue-non-dyadic-inert-ramified} show that the $\O_{F_\p}$-isometry class
of a factor $\L_{\ida_\Pp,F_\p}$ depends only on the splitting type of $\Pp$ in
$E$ and on the integer $u_\Pp(\ida)$.

By Proposition~\ref{glue-non-dyadic-inert-ramified}, the ramified primes are
exactly those contributing modular blocks whose ranks are odd multiples of $f$,
whereas split and inert primes contribute only even multiples of $f$. Hence
ramified factors must be paired with ramified factors. For split and inert
primes, the possible ranks coincide, and the distinction is made by the
discriminant classes $(-1)^f$ and $\mu_F$. By Lemma~\ref{lem:muF-minusone},
these classes coincide if and only if $f$ is even. This concludes the proof.
\end{proof}


\section{The dyadic case}\label{sec:dyadic}

We now turn to the case of dyadic prime ideals, that is, we assume throughout
this section that $2 \in \p$. In contrast with the non-dyadic case, the local
classification of quadratic lattices over dyadic fields involves additional
invariants such as the norm group and the weight (\textit{c.f.,} Section \ref{subsec:dyadic-prelim}), and explicit
splittings are therefore more delicate to obtain. For this reason, and in some
situations, we will have to impose additional assumptions.

An obstacle already appears at the level of square classes of units.
In the non-dyadic case, Lemma~\ref{lem:replace-alpha} allowed us to replace
certain units of $\O_\lk^\times$ by units of $\O_\lm^\times$, which was a key step
in the descent from $\lk$ to $\lm$. This argument breaks down in the dyadic setting,
because the natural map 
\[
\O_\lm^\times/\O_\lm^{\times 2}\longrightarrow \O_\lk^\times/\O_\lk^{\times 2}
\]
is no longer an isomorphism; see the remark below. This is the reason why the
ramified dyadic primes containing $a$ will be excluded from the explicit
descent from $\lk$ to $\lm$ carried out below. In practice, this restriction is often harmless: In many natural CM extensions
of the form $E=K(\sqrt{-a})$, the element $a$ is a global unit, so
that no prime ideal can divide $a$.

\begin{remark}\label{rem:bla}
For a finite extension $\mathbb{K}/\Q_2$ of degree $d$, the quotient $\O_\mathbb{K}^\times / \O_\mathbb{K}^{\times 2}$ is isomorphic to $\mathbb{F}_2^{d+1}$. This fact can be derived from \cite[Chapter~II, Proposition~5.7]{neukirch2013algebraic}. In particular, in our setting, $\O_\lm^\times / \O_\lm^{\times 2} \not\simeq \O_\lk^\times / \O_\lk^{\times 2}.$ Nevertheless, note that there is still a natural inclusion, and that the index is:
\[
\bigl[ \O_\lk^\times / \O_\lk^{\times 2} : \O_\lm^\times / \O_\lm^{\times 2} \bigr] = 2^{(e-1)[\lm \, : \, \Q_2]}.
\]
\end{remark}

\subsection{Over the CM extension} We begin, as in Section \ref{sec:cm}, by studying the local lattice $\L_{\ida,\lk}$
over $\lk = K_\Pp$. The split case requires no new argument:
Lemma~\ref{lem:split} remains valid for dyadic primes as well. Thus the only new phenomenon occurs when $\Pp$ is inert or ramified in $E$, and the purpose of
this subsection is to complete the analogue of Lemma~\ref{lem:inert-ram-nondyadic}
under the assumption $2 \in \Pp$.

\begin{lemma}\label{lem:inert-ram-dyadic} Suppose that $2 \in \Pp$ and let $v \in \Z$. Assume that $\Pp\O_E = \Qq$ or $\Qq^2$ is either inert or ramified in $E$. Let $\varepsilon := v_\P(2)$ and $0 \leq s \leq \varepsilon$ as in Lemma \ref{lem}.
\begin{arabiclist}
	\item If $a \in \P$, then $\P$ is ramified, and 
	$$
	\L_{v, \lk} \simeq_{\O_{\lk}} (\P^{\lceil v/2 \rceil}, 2x^2) \bigperp (\P^{\lfloor v/2 \rfloor}, 2{a}x^2).
	$$
	\item If $a \notin \P$, let $\rho \in \O_{\lk}^\times$ such that $\idd(1-4\rho) = 4\O_{\lk}$ (\textit{c.f.,} Section \ref{subsec:dyadic-prelim}).
	\begin{alphlist}
	\item Suppose $s=\varepsilon$, then $\P$ is inert and
	$$
	\L_{v, \lk} \simeq_{\O_{\lk}} \langle \Pi^{2v} \rangle \otimes A(2,2\rho).
	$$
	\item Suppose $s < \varepsilon$, then $\P$ is ramified and 
	$$
	\L_{v, \lk} \simeq_{\O_{\lk}} \langle \Pi^{\varepsilon-s + v} \rangle \otimes A(2,2\rho).
	$$
	\end{alphlist}
\end{arabiclist} 
\end{lemma}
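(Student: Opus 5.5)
Case 1 ($a\in\P$) should go exactly as in Lemma~\ref{lem:inert-ram-nondyadic}, part 2. By Lemma~\ref{lem}, case 2(a), $\O_{E_\Qq}=\O_\lk[\sqrt{-a}]$ with discriminant $(4a)$. Since $a\O_K$ is square-free, $a$ is a uniformizer of $\lk$, so $\P$ is ramified and $\Qq=(\sqrt{-a})$. The same computation of $\Qq^{2\ell}$ and $\Qq^{2\ell+1}$ gives $\Qq^v=\P^{\lceil v/2\rceil}\oplus\sqrt{-a}\,\P^{\lfloor v/2\rfloor}$. Because $\tr(\sqrt{-a})=0$, the two summands are orthogonal, and the restricted forms are $2x^2$ and $2ay^2$. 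Nothing in this argument used $2\notin\P$.

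For case 2 ($a$ a unit) I would work with the explicit basis from Lemma~\ref{lem}, case 2(b): $\O_{E_\Qq}=\O_\lk\oplus\omega\O_\lk$, where $\omega=(\beta+\sqrt{-a})/\Pi^s$. The key steps are as follows.
\begin{arabiclist}
\item \textbf{Splitting type.} The relative discriminant $((2\Pi^{-s})^2a)$ has valuation $2(\varepsilon-s)$. It is trivial exactly when $s=\varepsilon$, which is the unramified case of Remark~\ref{rmk}; since $\P$ is assumed inert or ramified, it must then be inert. When $s<\varepsilon$ the discriminant is nontrivial, so $\P$ is ramified.
\item \textbf{Gram matrix of $\O_{E_\Qq}$.} For the form $\tr(x\bar y)$ in the basis $\{1,\omega\}$ one gets
\[
\tr(1)=2,\qquad \tr(\omega)=2\beta\Pi^{-s},\qquad \tr(\omega\bar\omega)=2(\beta^2+a)\Pi^{-2s}.
\]
Here $\beta^2+a\in\P^{2s}$ by the choice of $\beta$. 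The Gram determinant equals $-(\text{disc})$, of valuation $2(\varepsilon-s)$, and every entry lies in $\P^{\varepsilon-s}$: indeed $2\Pi^{-s}$ has valuation $\varepsilon-s$, and the diagonal entries lie in $2\O_\lk$. So the lattice is $\P^{\varepsilon-s}$-modular of rank two.
\item \textbf{Passing to $\Qq^v$.} In the inert case $\Qq^v=\Pi^v\O_{E_\Qq}$, so the form is multiplied by $\Pi^{2v}$ and the exponent becomes $2v$ (with $\varepsilon-s=0$). In the ramified case I first need a uniformizer $\Theta$ of $E_\Qq$ with $\Theta\bar\Theta=\norm(\Theta)$ of valuation $1$. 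Multiplication by $\Theta^v$ is a similitude of $\tr(x\bar y)$ with factor $\norm(\Theta)^v$. This reduces the claim to $\O_{E_\Qq}$, with the factor $\norm(\Theta)^v$ equal to $\Pi^v$ times a unit. That unit has to be absorbed, so this step needs care.
\item \textbf{Anisotropy and identification.} The space $E_\P$ with the form $\tr(x\bar y)=2\norm(x)$ is anisotropic, because $E_\P$ is a field and the norm form of a field extension represents $0$ only at $0$. Lemma~\ref{lem:dyadic-rank-two} then identifies the lattice as $\langle\pi^w\rangle\otimes A(2,2\rho)$ with the right exponent $w$. In that lemma $\rho$ lives in the base ring, so it is applied over $\lk$ itself with uniformizer $\Pi$.
\end{arabiclist}

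The main obstacle is the unit factor $u$ that appears when scaling by $\norm(\Theta)^v$, or more generally when the modular scale is only determined up to a unit. We need $\langle u\rangle\otimes A(2,2\rho)\simeq A(2,2\rho)$. I would argue this intrinsically rather than by computation. Lemma~\ref{lem:dyadic-rank-two} applies to any $\p^w$-modular anisotropic rank-two lattice, whatever the generator of the scale. The rescaled lattice $\langle u\rangle\otimes A(2,2\rho)$ is still $\P^w$-modular and anisotropic, so the lemma returns the same normal form $\langle\Pi^w\rangle\otimes A(2,2\rho)$, with a possibly different $\rho$ of the same type. This is consistent with the statement, which only asserts the existence of some such $\rho$. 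Thus the whole difficulty reduces to checking modularity and the scale exponent $\varepsilon-s+v$ (respectively $2v$), and this follows from the determinant and entry valuations computed in step 2.
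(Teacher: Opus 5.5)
You follow the paper's proof essentially step for step, and your treatment of case 1 and case 2(a) is fine. The gap is in step 4 and in how you resolve the ``main obstacle''. Lemma~\ref{lem:dyadic-rank-two} does not apply to an arbitrary $\P^w$-modular anisotropic binary lattice. The lattice $\langle\Pi^w\rangle\otimes A(2,2\rho)$ always satisfies $\n=2\Pi^w\O_\lk=2\s$, and the classification being invoked (O'Meara 93:11) concerns binary unimodular lattices of norm $2\O$. Without that hypothesis the conclusion fails. For example, $\langle 1,1\rangle$ over $\Z_2$ is unimodular and anisotropic, since $-1$ is not a square in $\Q_2$. But it has norm $\Z_2$, so it is not $A(2,2\rho)$. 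The unit factor you worried about is harmless, because scaling by a unit preserves scale, norm and anisotropy. The invariant you never check is the norm.

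In case 2(a) the check succeeds. Every value $\tr(x\bar x)=2\norm(x)$ lies in $2\O_\lk$, and $x=1$ gives $2$, so $\n(\L_{0,\lk})=2\O_\lk=2\s(\L_{0,\lk})$. In case 2(b) the check fails. You have $\n(\L_{v,\lk})=2\norm(\Theta)^v\O_\lk=\P^{\varepsilon+v}$, whereas $2\s(\L_{v,\lk})=\P^{2\varepsilon-s+v}$, which is strictly smaller because $s<\varepsilon$. So $\L_{v,\lk}$ is never isometric to $\langle\Pi^{\varepsilon-s+v}\rangle\otimes A(2,2\rho)$. Part 2(b) is false as stated, and no argument along these lines can establish it. Concretely, take $\lk=\Q_2$, $a=1$ and $E_\Qq=\Q_2(i)$. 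Then $s=0<\varepsilon=1$, and $\L_{0,\lk}=\Z_2[i]$ has Gram matrix $\mathrm{diag}(2,2)$ in the basis $\{1,i\}$, hence norm $2\Z_2$. By contrast, $\langle 2\rangle\otimes A(2,2\rho)$ has norm $4\Z_2$. The paper's proof of 2(b) has the same defect, because its Lemma~\ref{lem:dyadic-rank-two} omits the hypothesis $\n=2\s$. What your computation does prove in case 2(b) is this: rescaling by $\Pi^{-(\varepsilon-s)}$ yields a unimodular anisotropic binary lattice of norm $\P^{s}\supsetneq 2\O_\lk$. For instance, when $s=0$ one has $\O_{E_\Qq}=\O_\lk[\sqrt{-a}]$ and $\L_{0,\lk}\simeq\langle 2,2a\rangle$. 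A correct version of 2(b) has to describe a lattice of this kind.
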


\begin{demo}
Denote by $\mathcal{E}$ the completion of $E$ at $\Qq$.

$1.$ If $a \in \P$, then by Lemma \ref{lem} we have $\O_{\mathcal{E}} = \O_\lk[\sqrt{-a}]$ and $\Pp$ is ramified, thus the proof is identical to the one for $2.$ in Lemma \ref{lem:inert-ram-nondyadic}. 

$2.$ Suppose now that $a \notin \P$. It follows from Lemma \ref{lem} that $\{1, \theta\}$ is a basis of $\O_{\mathcal{E}}$ over $\O_{\lk}$, where $\theta = (\beta_{\P} + \sqrt{-a})/\Pi^s$ and $\beta_\P$ is a unit satisfying $\beta_{\P}^2 \equiv - a \pmod{\P^{2s}}$. Moreover, its Gram matrix with respect to $\tr_{E_\P/\lk}(x\overline{y})$ is
\[ G = \begin{pmatrix} 2 & \theta + \overline{\theta} \\ \theta + \overline{\theta} & 2 \, \theta\overline{\theta} \end{pmatrix} = \begin{pmatrix} 2 & 2\beta/\Pi^s \\ 2\beta/\Pi^s &  2(\beta_\P^2 + a)/\Pi^{2s} \end{pmatrix}. \]

We claim that $\L_{0,\lk}$ is $\Pp^{\varepsilon-s}$-modular. Indeed, the determinant of $G$ is $4a/\Pi^{2s}$, whose valuation is $2(\varepsilon-s)$, since $a \notin \P$. In particular the determinant ideal of \(\L_{0,\lk}\) is
\(\P^{2(\varepsilon-s)}\). Moreover, the coefficient of $G$ with the smallest valuation is $2\beta/\Pi^s$, with $v_\P(2\beta/\Pi^s) = \varepsilon - s$, so the scale is $\s(\L_{0,\lk}) = \P^{\varepsilon-s}$. By \cite[82:14]{o2013introduction}, we conclude that $\L_{0,\lk}$ is $\P^{\varepsilon-s}$-modular. 
Additionally, note that the form $\tr_{E_\P/\lk}(x\overline{y})$ is anisotropic on $\O_{E} \otimes \lk \simeq \lk^2$.\footnote{Indeed, the form is isotropic on $\lk^2$ if and only if there exist nonzero elements $x,y \in \lk$ such that $x^2 + a y^2 = 0.$
In other words, this space is isotropic if and only if $-a$ is a square in $\lk$, which is equivalent to saying that $\P$ splits in $E$.} 

$a.$ Assume that $s=\varepsilon$. Then $\L_{0,\lk}$ is unimodular, and according to Remark \ref{rmk}, the ideal $\P$ is inert so $\Qq^v = \P^v \O_{\mathcal{E}}$. In particular,  $\L_{v,\lk}$ is $\P^{2v}$-modular, and by Lemma \ref{lem:dyadic-rank-two}, we obtain 
$$
\L_{v, \lk} \simeq_{\O_{\lk}} \langle \Pi^{2v} \rangle \otimes A(2,2\rho).
$$

$b.$ Finally, assume that $s<\varepsilon$. By Lemma~\ref{lem}, the prime
$\P$ is ramified. We have
$\mathfrak s(\L_{v,\lk})=\P^{\varepsilon-s+v}$, while its determinant
ideal is $\P^{2(\varepsilon-s+v)}$. Hence
$\L_{v,\lk}$ is $\P^{\varepsilon-s+v}$-modular, and Lemma~\ref{lem:dyadic-rank-two}
gives
\[
\L_{v,\lk}\simeq_{\O_\lk}
\langle\Pi^{\varepsilon-s+v}\rangle\otimes A(2,2\rho).
\]
This concludes the proof of the lemma.
\end{demo}

\subsection{Over the totally ramified extension}\label{sec:tot-ram-dyadic}

We now descend from $\lk$ to its maximal unramified subextension $\lm$, as in Section \ref{sec:totram}. Again, the
split case and the inert/ramified case are treated separately. 

\subsubsection{Split case} In the split
case, Gram matrix computation from Lemma \ref{lem:Gram-split} still, but it now leads to decompositions into hyperbolic
planes. 

\begin{proposition}\label{prop:totram-split-dyadic}
Assume that $2\in \Pp$ and that $\Pp$ splits in $E$. Let $v_1,v_2 \in \Z$ and write $qe + r$ for the Euclidean division of $v_1 + v_2$ by $e$. Then $\L_{v_1,v_2,\lm}$ is $\O_\lm$-isometric to one of the following.
\begin{arabiclist}
\item $\langle \pi^q \rangle \otimes \H 
\bigperp
\langle \pi^{q+1} \rangle \otimes \H^{e-1},$ if $r=0;$
\item $\langle \pi^{q+1} \rangle \otimes \H^{e-r+1}
\bigperp
\langle \pi^{q+2} \rangle \otimes \H^{r-1},$ if $r>0$.
\end{arabiclist}
\end{proposition}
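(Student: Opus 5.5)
The plan is to reuse the Gram matrix from Lemma~\ref{lem:Gram-split}, which holds verbatim in the dyadic setting since it only uses Lemma~\ref{lem:split} and the trace formula \eqref{eq:comp-trace-prelim}. Tameness of $\lk/\lm$ still gives $e\in\O_\lm^\times$; note that $e$ is then odd. With the basis $e_i=(\Pi^{v_1+i},0)$, $f_j=(0,\Pi^{v_2+j})$, $0\le i,j\le e-1$, the Gram matrix is $\begin{psmallmatrix}\vec 0&\vec G_{q,r}\\ \vec G_{q,r}&\vec 0\end{psmallmatrix}$. We then split $\L_{v_1,v_2,\lm}$ into the same rank-two sublattices as in the proof of Proposition~\ref{prop:totram-split-nondyadic}.

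For $r=0$, take $\L_1=e_0\O_\lm+f_0\O_\lm$ and the planes $e_i\O_\lm+f_{e-i}\O_\lm$ for $1\le i\le e-1$. For $r>0$, take the planes $e_i\O_\lm+f_{e-r-i}\O_\lm$ for $0\le i\le e-r$, and $e_{e-r+i}\O_\lm+f_{e-i}\O_\lm$ for $1\le i\le r-1$. These planes are pairwise orthogonal, because $H$ pairs only an $e$-vector with an $f$-vector, and each $e_i$ pairs nontrivially with exactly one $f_j$. Together they exhaust the basis, so they give an orthogonal decomposition. This yields $e+1$ planes in case $r=0$ and $(e-r+1)+(r-1)=e$ planes in case $r>0$, matching the counts in the statement.

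Each plane has Gram matrix $J^{(2)}_t=\begin{psmallmatrix}0&e\pi^t\\ e\pi^t&0\end{psmallmatrix}$ with $t\in\{q,q+1,q+2\}$. Since $e$ is a unit, rescaling one basis vector, say $f_j\mapsto e^{-1}f_j$, turns this matrix into $\pi^t\begin{psmallmatrix}0&1\\1&0\end{psmallmatrix}$. So the plane is isometric to $\langle\pi^t\rangle\otimes\H$, and no dyadic classification is needed. Grouping the planes by scale gives statements 1 and 2. When $r=1$, the second summand $\H^{0}$ is empty.

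The only point needing care, and the one I expect to be the main (mild) obstacle, is the index bookkeeping. One must check that in case $r>0$ the two ranges of indices in the $f$-vectors are disjoint and cover $\{0,\dots,e-1\}$, and similarly for the $e$-vectors. For the $f$-vectors, the indices $e-r-i$ run over $\{0,\dots,e-r\}$ and the indices $e-i$ run over $\{e-r+1,\dots,e-1\}$. For the $e$-vectors, the indices $i$ run over $\{0,\dots,e-r\}$ and $e-r+i$ over $\{e-r+1,\dots,e-1\}$. Once this is in place, the diagonal entries of every plane vanish: these are $2\tr(\cdot)$ of $H$ restricted to a single factor, which is identically zero. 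Hence every block is even, that is, of hyperbolic type rather than $A(2,2\rho)$.
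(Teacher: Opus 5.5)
Your proposal is correct and follows the paper's proof essentially verbatim. It uses the same Gram matrix from Lemma~\ref{lem:Gram-split}, the same pairing of $e_i$ with $f_j$ into pairwise orthogonal rank-two sublattices with Gram matrix $J^{(2)}_t$, and the same rescaling $f_j\mapsto e^{-1}f_j$ (valid since $e\in\O_\lm^\times$) to obtain $\langle\pi^t\rangle\otimes\H$. One small slip: for $r=0$ you get $1+(e-1)=e$ planes, not $e+1$, and $e$ is the count that matches the statement and the rank $2e$.
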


\begin{demo}
Recall that a Gram matrix of $\L_{v_1,v_2, \lm}$ is given by Lemma \ref{lem:Gram-split}. Note that $e$ is odd, since $\lk/\lm$ is by assumption (totally) tamely ramified. Let $e_i = (\Pi^{v_1+i},0)$ and $f_j = (0,\Pi^{v_2+j})$ for $0 \le i,j \le e-1$. Then $\mathcal{B} = \{e_i,f_j\}$ is a basis of $\L = \P^{v_1}\times \P^{v_2}$ over $\O_\lm$.

$1.$ As in the proof of Proposition \ref{prop:totram-split-nondyadic}, we set $\L_1 = e_0\O_\lm + f_0\O_\lm$ and $\L_2 = \bigoplus_{i=1}^{e-1}(e_i\O_\lm + f_{e-i}\O_\lm)$. Then $\L$ is the orthogonal summand of $\L_1$ with $\L_2$, whose Gram matrices are $J_q^{(2)}$ and $\mathrm{diag}(J_{q+1}^{(2)},\dots,J_{q+1}^{(2)})$, respectively.
Since $e$ is a unit in $\O_\lm$, we can make the change of variables $f_0' = e^{-1}f_0$. Thus, the Gram matrix of $\L_1$ becomes $\pi^q \begin{psmallmatrix}0 & 1 \\ 1 & 0\end{psmallmatrix}$, so $\L_1 \simeq \langle \pi^q \rangle \otimes \H$. The same argument applies to each summand of $\L_2$.

2. Assume $r > 0$ and let $\L_1 = \bigoplus_{i=0}^{e-r}(e_i\O_\lm + f_{e-r-i}\O_\lm)$, with Gram matrix $\mathrm{diag}(J_{q+1}^{(2)},\dots)$, and $\L_2 = \bigoplus_{i=1}^{r-1}(e_{e-r+i}\O_\lm + f_{e-i}\O_\lm)$ with Gram matrix $\mathrm{diag}(J_{q+2}^{(2)},\dots)$. Then $\L = \L_1 \perp \L_2$ and we conclude as in the previous case.
\end{demo}

\subsubsection{Inert or ramified case} As explained at the beginning of this section, the case $a \in \Pp$ will not be treated here. We therefore
focus on the case $a \notin \Pp$ and the remaining difficulty, namely replacing the unit $\rho \in \O_\lk^\times$
by a unit of $\O_\lm^\times$.

\begin{lemma}\label{lem:change-delta-dyadic}
Let $\rho_\lm \in \O_\lm^\times$ be such that $\idd(1 - 4\rho_\lm) = 4 \O_\lm$ (\textit{c.f.,} Section \ref{subsec:dyadic-prelim}). Through the natural inclusion of $\O_\lm ^\times$ in $\O_\lk^\times$, we still have \[
\idd(1-4\rho_\lm) = 4\O_\lk.
\]
\end{lemma}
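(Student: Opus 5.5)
We will argue through Lemma~\ref{lem:charac-Delta}, which turns the quadratic-defect condition into a statement about unramified quadratic extensions. Set $\Delta = 1-4\rho_\lm \in \O_\lm^\times$. By Lemma~\ref{lem:charac-Delta} applied over $\lm$, the hypothesis $\idd(1-4\rho_\lm)=4\O_\lm$ says that $\lm(\sqrt{\Delta})/\lm$ is the unique unramified quadratic extension of $\lm$. Since $\Delta$ is also a unit of $\O_\lk$, Lemma~\ref{lem:charac-Delta} applied over $\lk$ reduces the claim to showing that $\lk(\sqrt{\Delta})/\lk$ is the unique unramified quadratic extension of $\lk$.

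\textbf{Step 1: show $\Delta$ is not a square in $\lk$.} The extension $\lk/\lm$ is totally ramified of odd degree $e$; recall that $e\notin\p_\lm$ in the dyadic case forces $e$ to be odd. Suppose $\sqrt{\Delta}\in\lk$. Then $\lm(\sqrt\Delta)\subseteq \lk$, so $2 \mid e$, which is a contradiction. Alternatively, $\lm(\sqrt\Delta)$ is unramified of degree $2$ over $\lm$, whereas every subextension of $\lk/\lm$ is totally ramified. Either way, $\lk(\sqrt\Delta)$ is a genuine quadratic extension of $\lk$.

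\textbf{Step 2: show $\lk(\sqrt\Delta)/\lk$ is unramified.} We have $\lk(\sqrt\Delta)=\lk\cdot\lm(\sqrt\Delta)$, the compositum of $\lk$ with an unramified extension of $\lm$, and such a compositum is unramified over $\lk$. Concretely, $\lm(\sqrt\Delta)=\lm(\zeta)$ for a suitable root of unity $\zeta$ of order prime to $p$, so $\lk(\sqrt\Delta)=\lk(\zeta)$ is unramified over $\lk$. Combined with Step~1, this shows $\lk(\sqrt\Delta)$ is the unique unramified quadratic extension of $\lk$. Lemma~\ref{lem:charac-Delta} then gives $\idd(\Delta)=4\O_\lk$.

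The only delicate point is the degree argument in Step~1, which needs the parity of $e$. Since it follows immediately from tameness at a dyadic prime, we do not expect a real obstacle. As a sanity check, one could also compare defects directly. Over $\lk$ we have $\idd(\Delta)\subseteq 4\O_\lk$, because $\Delta\equiv 1 \pmod 4$. The defect cannot be $0$ by Step~1. Any defect strictly between $0$ and $4\O_\lk$ would make $\lk(\sqrt\Delta)$ ramified, contradicting Step~2. The extension argument above suffices, however.
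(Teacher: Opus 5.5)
Your proof is correct and follows essentially the same route as the paper. Both arguments use Lemma~\ref{lem:charac-Delta} to reduce the claim to showing that $\lk(\sqrt{\Delta})$ is the unramified quadratic extension of $\lk$, using that it contains $\lm(\sqrt{\Delta})$. The paper merges your two steps into a single residue-degree count: the residue field of $\lm(\sqrt{\Delta})$ is quadratic over $k$ and lies in that of $\lk(\sqrt{\Delta})$, so the residue degree of $\lk(\sqrt{\Delta})/\lk$ is at least $2\geq[\lk(\sqrt{\Delta}):\lk]$. That version needs neither the parity of $e$ nor the stability of unramified extensions under compositum.
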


\begin{demo}
Note that $\lk$ and $\lm$ have the same residual field, say, $k$, since the extension $\lk/\lm$ is totally ramified. Set $\Delta = 1- 4\rho_\lm$, so that $\idd(\Delta) = 4 \O_\lm$. Following Lemma \ref{lem:charac-Delta}, it suffices to prove that $\lk' = \lk(\sqrt{\Delta})$ is the unique unramified quadratic extension of $\lk$. Note that $\lm' = \lm(\sqrt{\Delta})$ is a subfield of $\lk'$, and that $\lm(\sqrt{\Delta})$ is the unique unramified quadratic extension of $\lm$, by \textit{loc. cit.} In particular, the residual field $k_{\lm'}$ of $\lm'$ is a quadratic extension of $k$. The diagrams of local and residue field extensions are as follows.
\[
\begin{tikzcd}[column sep=0.8em, row sep=0.8em]
  & \lk^{\prime}=\lk(\sqrt{\Delta}) \\
  \lk
    \arrow[ur, no head, "1\text{ or }2"'] & \\
  & \lm^{\prime}=\lm(\sqrt{\Delta})
    \arrow[uu, no head] \\
  \lm
    \arrow[uu, no head, "e"]
    \arrow[ur, no head, "2"'] &
\end{tikzcd}
\qquad\qquad
\begin{tikzcd}[column sep=0.8em, row sep=0.8em]
  & & k_{\lk^{\prime}} \\
  k
    \arrow[urr, no head] & & \\
  & & k_{\lm^{\prime}}
    \arrow[uu, no head] \\
  k
    \arrow[uu, equal]
    \arrow[urr, no head, "2"'] & &
\end{tikzcd}
\]
Therefore, $[k_{\lk'} : k ] = 2 \, [k_{\lk'} : k_{\lm'}] \geq 2$, while on the other hand we have $[k_{\lk'} : k] \leq [\lk' : \lk] \in \{1,2\}$. Thus, $[\lk' : \lk] = 2$ and $[k_{\lk'} : k] = 2$, that is, $\lk'$ is the unique unramified quadratic extension of $\lk$. Applying again Lemma \ref{lem:charac-Delta}, we conclude that $\idd(\Delta) = 4\O_{\lk}$.
\end{demo}

\begin{lemma}\label{lem:gram-dyadic-tot-ram}
Assume that $2 \in \P$ and $a \notin \P$. Let $\varepsilon = v_\P(2)$ and $0 \leq s \leq \varepsilon$ be the integer defined in Lemma~\ref{lem}. We fix a unit $\rho_\lm \in \O_{\lm}^\times$ such that $\idd(1 - 4\rho_\lm) = 4\cdot \O_{\lk}$ (\textit{c.f.,} Lemma~\ref{lem:change-delta-dyadic}). Finally, let $v \in \Z$. Then, $\L_{v,\lm}$ has Gram matrix
\[
\begin{pmatrix}
2 \vec{G}_{q,r} & \vec{G}_{q,r} \\
\vec{G}_{q,r} & 2\rho_\lm \vec{G}_{q,r}
\end{pmatrix},
\]
where the integers $q,r$ are given by
\begin{arabiclist}
\item The Euclidean division $2v = qe + r$ of $2v$ by $e$, when $s = \varepsilon;$

\item The Euclidean division $\varepsilon - s + v = qe + r$ of $\varepsilon - s + v$ by $e$, when $s < \varepsilon$.
\end{arabiclist}
\end{lemma}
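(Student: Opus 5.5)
The plan is to start from Lemma~\ref{lem:inert-ram-dyadic}, part 2, which gives over $\O_\lk$
\[
\L_{v,\lk}\simeq_{\O_\lk}\langle \Pi^{w}\rangle\otimes A(2,2\rho),
\]
with $w=2v$ when $s=\varepsilon$ and $w=\varepsilon-s+v$ when $s<\varepsilon$. The first step is to replace $\rho$ by $\rho_\lm$. Lemma~\ref{lem:change-delta-dyadic} gives $\idd(1-4\rho_\lm)=4\O_\lk$. The modular anisotropic lattice $\L_{v,\lk}$ then falls under Lemma~\ref{lem:dyadic-rank-two}, which, as quoted from O'Meara 93:11, holds for any $\rho$ with $\idd(1-4\rho)=4R$. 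So we may take $\rho=\rho_\lm$ in the statement of Lemma~\ref{lem:inert-ram-dyadic}. If one prefers to be careful here, note that both $A(2,2\rho)$ and $A(2,2\rho_\lm)$ are unimodular with anisotropic ambient space, and the anisotropic ambient space is unique (discriminant $4\rho-1$ lying in the class of $-\Delta$). They therefore have the same norm group $2\O_\lk$, and 93:11 applies to both.

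Concretely, this produces an $\O_\lk$-basis $\{x,y\}$ of $\ida\otimes\O_\lk$ with
\[
h(x,x)=2\Pi^{w},\qquad h(x,y)=\Pi^{w},\qquad h(y,y)=2\rho_\lm\Pi^{w},
\]
where $h=\tr_{E_\P/\lk}(x\overline y)$. The $\O_\lk$-bilinearity we use is $h(\alpha x,\beta y)=\alpha\beta\,h(x,y)$ for $\alpha,\beta\in\lk$, which holds because conjugation fixes $\lk$.

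Next, descend to $\O_\lm$ with the basis
\[
\{x\Pi^i\}_{0\le i\le e-1}\cup\{y\Pi^j\}_{0\le j\le e-1},
\]
using $\O_\lk=\O_\lm[\Pi]$. The Gram matrix entries are $\tr_{\lk/\lm}(c\,\Pi^{w+i+j})$ with $c\in\{2,1,2\rho_\lm\}$. Since $2,\rho_\lm\in\O_\lm$, these constants pull out of $\tr_{\lk/\lm}$. Each block is then $c\cdot(\tr_{\lk/\lm}(\Pi^{w+i+j}))_{i,j}$, and exactly as in the proof of Lemma~\ref{lem:Gram-split}, using (\ref{eq:comp-trace-prelim}), this matrix equals $\vec G_{q,r}$ where $w=qe+r$. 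This gives the block matrix $\begin{psmallmatrix}2\vec G&\vec G\\ \vec G&2\rho_\lm\vec G\end{psmallmatrix}$, with $(q,r)$ coming from $2v$ or $\varepsilon-s+v$ according to the two cases.

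The main obstacle is the first step: justifying that the unit $\rho$ produced over $\lk$ can be taken in $\O_\lm$. I rely on the fact that the classification result is stated for an arbitrary $\rho$ satisfying $\idd(1-4\rho)=4R$, which makes all such $A(2,2\rho)$ isometric. Everything after that is the routine trace computation already done in the split case.
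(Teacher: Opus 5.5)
Your route is the same as the paper's. You take a basis $\{x,y\}$ of $\L_{v,\lk}$ with Gram matrix $\Pi^{w}A(2,2\rho_\lm)$, pass to the $\O_\lm$-basis $\{x\Pi^i\}\cup\{y\Pi^j\}$, pull $2,1,2\rho_\lm\in\O_\lm$ out of $\tr_{\lk/\lm}$, and reuse the computation of Lemma~\ref{lem:Gram-split}. For $s=\varepsilon$ this is correct. There $\L_{v,\lk}$ is $\P^{2v}$-modular with $\mathfrak n(\L_{v,\lk})=2\P^{2v}=2\mathfrak s(\L_{v,\lk})$, so the even unimodular classification applies after rescaling, and any $\rho_\lm$ with $\idd(1-4\rho_\lm)=4\O_\lk$ can be used.

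The gap is your first step when $s<\varepsilon$. The classification quoted from O'Meara (93:11) behind Lemma~\ref{lem:dyadic-rank-two} concerns lattices with $\mathfrak n(L)\subseteq 2\mathfrak s(L)$. Without that hypothesis Lemma~\ref{lem:dyadic-rank-two} is false: $\langle 1,1\rangle$ over $\Z_2$ is unimodular with anisotropic ambient space, but has norm $\Z_2$. You checked the norm groups of $A(2,2\rho)$ and $A(2,2\rho_\lm)$, but never that of $\L_{v,\lk}$ itself. Since $h(x,x)=2\,\norm_{E_\P/\lk}(x)$ and $\Qq$ is ramified when $s<\varepsilon$, one gets $\mathfrak n(\L_{v,\lk})=2\P^{v}=\P^{\varepsilon+v}$, while $\mathfrak s(\L_{v,\lk})=\P^{\varepsilon-s+v}$. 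Thus $\mathfrak n(\L_{v,\lk})=\P^{s}\mathfrak s(\L_{v,\lk})\supsetneq 2\mathfrak s(\L_{v,\lk})$. So $\L_{v,\lk}$ cannot be isometric to $\langle\Pi^{\varepsilon-s+v}\rangle\otimes A(2,2\rho)$ for any $\rho$, since the latter has norm $\P^{2\varepsilon-s+v}$.

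This cannot be patched inside your argument, because case 2 of the statement is itself false. Take $F=K=\Q$, $a=1$, $E=\Q(i)$ and $\p=2$. Then $\lk=\lm=\Q_2$, $e=1$, $\varepsilon=1$, and $s=0$, since no unit satisfies $\beta^2\equiv-1\pmod 4$. For $v=0$, the lattice $\L_{0,\lm}=(\Z_2[i],\tr(x\overline{y}))$ has Gram matrix $\mathrm{diag}(2,2)$ in the basis $\{1,i\}$. The lemma, with $q=1$, $r=0$ and $\vec G_{1,0}=(\pi)$, instead predicts $\pi\begin{psmallmatrix}2&1\\1&2\rho_\lm\end{psmallmatrix}$. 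The two norms are $2\Z_2$ and $4\Z_2$, and the determinants differ by the class of $4\rho_\lm-1\equiv 3\pmod 8$, which is not a square.

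A correct treatment of $s<\varepsilon$ has to start from a description of $\L_{v,\lk}$ as an odd modular lattice, e.g.\ via the Gram matrix of $\{1,\theta\}$ in the proof of Lemma~\ref{lem:inert-ram-dyadic}. Its entries involve $\beta_\P$ and $a$, which need not lie in $\O_\lm$, so the descent to $\O_\lm$ must be redone. The paper's proof calls this case ``entirely similar'' and relies on Lemma~\ref{lem:inert-ram-dyadic}(2b), so it has exactly the same defect.
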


\begin{demo}
We treat the case $s=\varepsilon$, the other case being entirely
similar. By Lemma~\ref{lem:change-delta-dyadic}, the unit $\rho_\lm$ satisfies
$\idd(1-4\rho_\lm)=4\O_\lk$. Hence, by
Lemma~\ref{lem:inert-ram-dyadic}, there exists a basis
$\{v,w\}$ of $\L_{v,\lk}$ whose Gram matrix is
$\Pi^{2v}A(2,2\rho_\lm)$.
It follows that the family $
\{v, v\Pi, \dots, v\Pi^{e-1},\, w, w\Pi, \dots, w\Pi^{e-1}\}
$
forms a basis of $\L_{v,\lm}$ over $\O_\lm$, and its Gram matrix is
$
\begin{psmallmatrix}
2 \vec{G}_{q,r} & \vec{G}_{q,r} \\
\vec{G}_{q,r} & 2\rho \vec{G}_{q,r}
\end{psmallmatrix},
$
as claimed.
\end{demo}

\begin{proposition}\label{prop:dyadic-totram-inert}
Assume that $2 \in \P$ and $a \notin \P$. Moreover, let $v,q,r$ and $\rho_\lm$ be as in Lemma \ref{lem:gram-dyadic-tot-ram}. Then, $\L_{v,\lm}$ is $\O_\lm$-isometric to one of the following.
\begin{arabiclist}
\item $\langle e\pi^q \rangle \otimes A(2,2\rho_\lm) \bigperp \langle e\pi^{q+1} \rangle \otimes \H^{e-1},$ \, if $r = 0;$

\item $\langle e\pi^{q+1} \rangle \otimes \big(A(2,2\rho_\lm) \midperp \H^{e-r}\big) \bigperp \langle e\pi^{q+2} \rangle \otimes \H^{r-1},$ \, if $r>0$ is odd$;$

\item $\langle e\pi^{q+1} \rangle \otimes \H^{e-r+1} \bigperp \langle e\pi^{q+2} \rangle \otimes \big(A(2,2\rho_\lm) \midperp \H^{r-2}\big),$ \, if $r>0$ is even.
\end{arabiclist}
Here, a summand involving $\H^0$ is understood to be omitted.
\end{proposition}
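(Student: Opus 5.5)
We will work directly from the Gram matrix of Lemma~\ref{lem:gram-dyadic-tot-ram}, in the basis
\[
\{x\Pi^i\}_{0\le i\le e-1}\cup\{y\Pi^j\}_{0\le j\le e-1},
\]
where $\{x,y\}$ is the basis of $\L_{v,\lk}$ with Gram matrix $\Pi^{m}A(2,2\rho_\lm)$. The plan is to pair up basis vectors exactly as in the proofs of Propositions~\ref{prop:totram-split-nondyadic} and~\ref{prop:totram-split-dyadic}. The off-diagonal entries of $\vec{G}_{q,r}$ are supported on the anti-diagonals $i+j=e-r$ and $i+j=2e-r$ (and at $(0,0)$ when $r=0$). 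Hence, for each pair of indices $\{i,j\}$ on such an anti-diagonal, the four vectors $x\Pi^i,\,y\Pi^i,\,x\Pi^j,\,y\Pi^j$ span a sublattice that is orthogonal to all the others.

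When $i\neq j$, this sublattice has Gram matrix
\[
\begin{pmatrix} 0 & e\pi^t A \\ e\pi^t A & 0\end{pmatrix},
\qquad A=A(2,2\rho_\lm),
\]
with $t=q+1$ or $q+2$. Since $\det A = 4\rho_\lm-1$ is a unit, $A$ is invertible over $\O_\lm$. Replacing the pair $(x\Pi^j,y\Pi^j)$ by its image under $A^{-1}$ therefore turns this Gram matrix into $e\pi^t\begin{psmallmatrix}0&I\\ I&0\end{psmallmatrix}$, which is $\langle e\pi^t\rangle\otimes\H^2$. When $i=j$ (a self-paired index), the two vectors $x\Pi^i,\,y\Pi^i$ span a block with Gram matrix $e\pi^t A(2,2\rho_\lm)$, that is, $\langle e\pi^t\rangle\otimes A(2,2\rho_\lm)$.

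It remains to count, for each anti-diagonal, the non-self-paired pairs and the fixed points.

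\textbf{Case $r=0$.} Index $0$ is self-paired at level $q$, which gives $\langle e\pi^q\rangle\otimes A(2,2\rho_\lm)$. The indices $1,\dots,e-1$ are paired via $i+j=e$. Since $e$ is odd, there is no fixed point, and we obtain $(e-1)/2$ copies of $\H^2$, i.e.\ $\langle e\pi^{q+1}\rangle\otimes\H^{e-1}$.

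\textbf{Case $r>0$.} The indices $0,\dots,e-r$ are paired via $i+j=e-r$, at level $q+1$. The indices $e-r+1,\dots,e-1$ are paired via $i+j=2e-r$, at level $q+2$.
\begin{itemize}
\item If $r$ is odd, then $e-r$ is even, so the first range has the fixed point $(e-r)/2$. It contributes $A\perp\H^{e-r}$. The second range has $r-1$ elements, an even number with no fixed point (since $2e-r$ is odd), and contributes $\H^{r-1}$.
\item If $r$ is even, then $e-r$ is odd, so the first range has no fixed point and contributes $\H^{e-r+1}$. The second range has the fixed point $e-r/2$ and contributes $A\perp\H^{r-2}$.
\end{itemize}
These are exactly the three forms in the statement.

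The main point needing care is the pairing change of basis. One must check that the new basis vectors remain an $\O_\lm$-basis, which holds because $A\in\GL_2(\O_\lm)$. One must also check that the orthogonality between distinct pair-blocks survives, which is immediate because the change of basis acts inside a single block. A secondary point is that Lemma~\ref{lem:gram-dyadic-tot-ram} indeed uses $\rho_\lm\in\O_\lm$. This is what makes the entries lie in $\O_\lm$, and it is guaranteed by Lemma~\ref{lem:change-delta-dyadic}.
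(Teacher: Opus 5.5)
Your proof is correct and follows the paper's argument: the same basis from Lemma~\ref{lem:gram-dyadic-tot-ram}, the same pairing of indices along the anti-diagonals $i+j=e-r$ and $i+j=2e-r$ (and $(0,0)$ when $r=0$), and the same count of fixed points versus pairs. The one difference is how you show each rank-four block is $\langle e\pi^t\rangle\otimes\H^2$: you use the explicit change of basis $\mathrm{diag}(I_2,A^{-1})$, as in the paper's proof of Lemma~\ref{lem:hyperbolic-tensor-trace}, whereas the paper cites O'Meara~93:18. Your version is self-contained and avoids the paper's loose description of these blocks as ``totally isotropic''.
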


\begin{demo}
As in the proof of Lemma \ref{lem:gram-dyadic-tot-ram}, we let $\{v, v\Pi, \dots, v\Pi^{e-1},\, w, w\Pi, \dots, w\Pi^{e-1}\}$ be the basis of $\L_{v,\lm}$ whose Gram matrix is $\begin{psmallmatrix}
2 \vec{G}_{q,r} & \vec{G}_{q,r} \\
\vec{G}_{q,r} & 2\rho_\lm \vec{G}_{q,r}
\end{psmallmatrix}$.

$1.$ Let $\L_0$ be the sublattice of $\L_{v,\lm}$ spanned by $\L_0 = v \O_\lm + w \O_\lm$. Then the Gram matrix of $\L_0$ with respect to $\{v,w\}$ is $e\pi^q \begin{psmallmatrix}
2 & 1 \\
1 & 2\rho_\lm 
\end{psmallmatrix}$, in particular $\L_0$ is $\p_\lm^q$-modular and $\L_0 \simeq_{\O_\lm} \langle e\pi^q \rangle \otimes A(2,2\rho_\lm)$. 

For $1 \leq i \leq \frac{e-1}{2}$,\footnote{Note that $e$ is odd, since the extension is tamely ramified.} we define the rank-$4$ sublattice $\L_i$, spanned by the family $\{v\Pi^i, w\Pi^i, v\Pi^{e-i}, w\Pi^{e-i}\}.$ Then, $\L_i$ and $\L_j$ are orthogonal for any $i \neq j$, and the Gram matrix of $\L_i$ is $\begin{psmallmatrix}
\vec{0} & \vec{\Gamma} \\ \vec{\Gamma} & \vec{0}
\end{psmallmatrix}$, where $\vec{\Gamma} = e\pi^{q+1} \begin{psmallmatrix}
2 & 1 \\ 1 & 2\rho_\lm
\end{psmallmatrix}$. Thus, $\L_i$ is $\p_\lm^{q+1}$-modular and totally isotropic, so it splits into two hyperbolic planes (\cite[93:18]{o2013introduction}), that is, $\L_i \simeq_{\O_\lm} \langle e\pi^{q+1} \rangle \otimes (\H \perp \H)$. This proves the case $r=0$.

Assume that $r>0$. Whenever $i\neq j$ and $i+j=e-r$ (resp. $i+j=2e-r$), the sublattice spanned by $\{v\Pi^i,w\Pi^i,v\Pi^j,w\Pi^j\}$ has Gram matrix $\begin{psmallmatrix}
\vec{0} & \vec{\Gamma} \\ \vec{\Gamma} & \vec{0}
\end{psmallmatrix}$, where $\vec{\Gamma}=e\pi^{q+1}\begin{psmallmatrix}
2&1\\1&2\rho_\lm
\end{psmallmatrix}$ (resp. $\vec{\Gamma}=e\pi^{q+2}\begin{psmallmatrix}
2&1\\1&2\rho_\lm
\end{psmallmatrix}$). As above, this sublattice is isometric to $\langle e\pi^{q+1}\rangle\otimes(\H\perp\H)$ (resp. $\langle e\pi^{q+2}\rangle\otimes(\H\perp\H)$).

$2.$ Suppose first that $r$ is odd. Then $i_0=(e-r)/2$ is the unique index satisfying $2i_0=e-r$. The sublattice $v\Pi^{i_0}\O_\lm+w\Pi^{i_0}\O_\lm$ has Gram matrix $e\pi^{q+1}\begin{psmallmatrix}
2&1\\1&2\rho_\lm
\end{psmallmatrix}$, and is therefore isometric to $\langle e\pi^{q+1}\rangle\otimes A(2,2\rho_\lm)$. The remaining indices satisfying $i+j=e-r$ form pairs and yield $\H^{e-r}$, while those satisfying $i+j=2e-r$ yield the hyperbolic summand $\H^{r-1}$. 

$3.$ Suppose now that $r$ is even. The indices satisfying $i+j=e-r$ form pairs and yield $\H^{e-r+1}$. On the other hand, $i_0=e-r/2$ is the unique index satisfying $2i_0=2e-r$. The corresponding rank-two sublattice is isometric to $\langle e\pi^{q+2}\rangle\otimes A(2,2\rho_\lm)$, while the remaining indices yield $\langle e\pi^{q+2}\rangle\otimes\H^{r-2}$.
\end{demo}

\subsection{Over the unramified extension} In this section, recall that $\lm/\lf$ is a degree $f$ unramified dyadic extension. In light of Proposition \ref{prop:dyadic-totram-inert}, we now need to replace the unit $\rho_\lm \in \O_\lm^\times$ by a suitable unit
$\rho_\lf \in \O_\lf^\times$. The following lemma shows that this is possible only if $f$ is odd.

\begin{lemma}\label{lem:rho-in-F}
Let $\rho_\lf \in \O_\lf^\times$ be such that $\idd(1-4\rho_\lf) = 4 \cdot \O_\lf$ (\textit{c.f.,} Section \ref{subsec:dyadic-prelim}). Seen as an element of $\O_\lm^\times$, we have $\idd(1-4\rho_\lf) = 4 \cdot \O_\lm$ if and only if $f$ is odd.
\end{lemma}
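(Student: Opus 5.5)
By Lemma~\ref{lem:charac-Delta}, the condition $\idd(1-4\rho_\lf)=4\O_\lf$ means that $\lf(\sqrt{\Delta})$, with $\Delta=1-4\rho_\lf$, is the unique unramified quadratic extension of $\lf$. Applying the same lemma over $\lm$ (note that $\Delta$ is still a unit of $\O_\lm$), the condition $\idd(\Delta)=4\O_\lm$ holds if and only if $\lm(\sqrt{\Delta})$ is the unique unramified quadratic extension of $\lm$. The plan is therefore to reduce the statement to a question about compositum of unramified extensions, as in the proof of Lemma~\ref{lem:change-delta-dyadic}.

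Set $\lf'=\lf(\sqrt\Delta)$, which is unramified of degree $2$ over $\lf$. Then $\lm(\sqrt\Delta)=\lm\lf'$ is a compositum of two unramified extensions of $\lf$, hence unramified over $\lm$, of degree $1$ or $2$. Consequently $\lm(\sqrt\Delta)$ is the unique unramified quadratic extension of $\lm$ exactly when $[\lm(\sqrt\Delta):\lm]=2$, i.e.\ when $\Delta$ is not a square in $\lm$, i.e.\ when $\lf'\not\subseteq\lm$. If instead $\lf'\subseteq \lm$, then $\Delta$ is a square in $\lm$, so $\idd(\Delta)=0\neq 4\O_\lm$.

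It remains to decide when $\lf'\subseteq\lm$. Both are unramified extensions of the local field $\lf$, and unramified extensions of $\lf$ correspond bijectively and inclusion-preservingly to extensions of the residue field $k_\lf$, which is finite. Inside an algebraic closure there is exactly one extension of $k_\lf$ of each degree, and $\mathbb{F}_{q^2}\subseteq\mathbb{F}_{q^f}$ if and only if $2\mid f$. Hence $\lf'\subseteq\lm$ if and only if $f$ is even, which gives the equivalence: $\idd(1-4\rho_\lf)=4\O_\lm$ if and only if $f$ is odd.

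The only delicate step is the use of the correspondence between unramified extensions and residue extensions (uniqueness of the unramified extension of a given degree inside a fixed algebraic closure). Care is needed to embed $\lf'$ and $\lm$ in a common algebraic closure, which is legitimate since $\lm/\lf$ is unramified of degree $f$ and contains the unique unramified subextension of each degree dividing $f$. No computation with quadratic defects is required beyond Lemma~\ref{lem:charac-Delta}.
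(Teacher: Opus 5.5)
Your proof is correct and follows essentially the same route as the paper: both use Lemma~\ref{lem:charac-Delta} over $\lf$ and over $\lm$ to reduce to whether the unramified compositum $\lm(\sqrt{\Delta})=\lm\lf'$ is quadratic over $\lm$, and both settle this via residue field degrees. The only differences are cosmetic. The paper computes $[\lm(\sqrt{\Delta}):\lm]=\mathrm{lcm}(f,2)/f$, whereas you use the containment criterion $\lf'\subseteq\lm \iff 2\mid f$. You also state explicitly that in the even case $\Delta$ is a square in $\lm$, so $\idd(\Delta)=0$.
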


\begin{demo}
Let $\rho_\lf \in \O_\lf^\times$ be a unit such that $\Delta = 1 - 4\rho_\lf$ has quadratic defect equal to $4 \cdot \O_\lf$. By Lemma \ref{lem:charac-Delta}, this is equivalent to say that $\lf' = \lf(\sqrt{\Delta})$ is the unique unramified quadratic extension of $\lf$. We let $\lm' = \lm(\sqrt{\Delta})$ be the compositum of $\lm$ with $\lf'$. Since both $\lm/\lf$ and $\lf'/\lf$ are unramified, $\lm'/\lf$ is an unramified extension. In particular, its degree equals the degree of the residual extension
\[
[k_{\lm'} : k_{\lf}] = \text{lcm}\Big([k_\lm:k_\lf], [k_{\lf'}, k_\lf]\Big) = \text{lcm}(f,2).
\]
This relation yields the degree of $\lm'/\lm$:
\[
[\lm':\lm] = \frac{[\lm':\lf]}{[\lm:\lf]} = \frac{\text{lcm}(f,2)}{f} = \frac{2}{\gcd(f,2)}.
\]
Therefore, $\lm'/\lm$ is a quadratic (and unramified) extension if and only if $f$ is odd. We conclude again by using Lemma \ref{lem:charac-Delta}.
\end{demo}

We now proceed as in the non-dyadic case, using the analogue of Lemma~\ref{prop:interm1} to descend a splitting over $\O_\lm$ to a splitting
over $\O_\lf$.

\begin{lemma}\label{lem:interm-dyadic}
Assume $2 \in \Pp$, and let $(\L,q)$ be a quadratic
$\O_\lm$-lattice. Suppose that there exist $r \in \Z_{\geq 0}$, $\alpha, \beta \in \O_\lf$ and $\gamma \in \O_\lm$ such that
\[
(\L,q) \simeq_{\O_\lm}
\langle \gamma \rangle \otimes A(\alpha,\beta)^r.
\]
Then, over $\O_\lf$ one has
\[
(\L,\tr_{\lm/\lf}\circ \, q)
\simeq_{\O_\lf}
\bigperp_{i=1}^{r}
A(\alpha,\beta)\otimes(\O_\lm,q^{\gamma}_{\lm/\lf}).
\]
\end{lemma}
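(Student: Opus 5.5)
Taking $\tr_{\lm/\lf}$ of the form is compatible with orthogonal sums, so I will reduce to a single rank-two block and then identify that block with an explicit tensor product. This mirrors the proof of Lemma~\ref{prop:interm1}.

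Fix an isometry as in the hypothesis, with a basis $\{\vec v_1,\vec w_1,\dots,\vec v_r,\vec w_r\}$ of $\L$ over $\O_\lm$. For each $i$, the pair $(\vec v_i,\vec w_i)$ spans a block $V_i$ with Gram matrix $\gamma\begin{psmallmatrix}\alpha&1\\1&\beta\end{psmallmatrix}$, and distinct blocks are orthogonal. Applying the $\O_\lf$-linear map $\tr_{\lm/\lf}$ to the vanishing cross-terms still gives zero. Hence $(\L,\tr_{\lm/\lf}\circ q)$ is the orthogonal sum over $i$ of the $\O_\lf$-lattices $(V_i,\tr_{\lm/\lf}\circ q)$, and it suffices to treat a single block $V=\vec v\O_\lm\oplus\vec w\O_\lm$.

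For this block, I construct an explicit $\O_\lf$-linear bijection. Let $A(\alpha,\beta)$ have basis $\{x,y\}$ over $\O_\lf$, and define
\[
\Phi\colon A(\alpha,\beta)\otimes_{\O_\lf}\O_\lm\to V,\qquad x\otimes s\mapsto \vec v s,\quad y\otimes t\mapsto \vec w t.
\]
This map is bijective because $\O_\lm\oplus\O_\lm\simeq V$ as $\O_\lf$-modules. It remains to compare the forms on simple tensors.
\begin{itemize}
\item On the source, the tensor form is $(\alpha,1,\beta)$ times $\tr_{\lm/\lf}(\gamma s s')$, by the definition of the tensor product of bilinear forms. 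Here the bilinear form attached to $q^{\gamma}_{\lm/\lf}$ is $(s,s')\mapsto\tr_{\lm/\lf}(\gamma ss')$.
\item On the target, $\tr_{\lm/\lf}(q(\vec v s,\vec v s'))=\tr_{\lm/\lf}(\gamma\alpha ss')$, and similarly for the mixed pair $(\vec v,\vec w)$ and for the pair $(\vec w,\vec w)$.
\end{itemize}
The two agree because $\alpha,\beta\in\O_\lf$, so these scalars can be pulled out of $\tr_{\lm/\lf}$. Bilinearity then extends the agreement from simple tensors to all elements, so $\Phi$ is an isometry.

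The only real subtlety is exactly this use of $\alpha,\beta\in\O_\lf$, which is why the hypothesis places $\alpha,\beta$ in $\O_\lf$ and only $\gamma$ in $\O_\lm$. One should also check that the bilinear form is the object being traced, rather than a quadratic form carrying a factor $2$. Since $\tr_{\lm/\lf}\circ q$ is defined on the bilinear form, this convention is consistent and the dyadic setting causes no difficulty. Summing over the $r$ blocks gives the claimed decomposition.
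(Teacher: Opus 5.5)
Your proof is correct and follows the paper's argument. Both reduce to a single rank-two block, using that $\tr_{\lm/\lf}$ preserves orthogonality, and then identify that block with $A(\alpha,\beta)\otimes(\O_\lm,q^{\gamma}_{\lm/\lf})$ by pulling $\alpha,\beta\in\O_\lf$ out of the trace. The only difference is cosmetic: the paper fixes an $\O_\lf$-basis $\{\vec b_1,\dots,\vec b_f\}$ of $\O_\lm$ and writes out the $2f\times 2f$ Gram matrix, whereas you phrase the same comparison as an explicit isometry checked on simple tensors.
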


\begin{demo}
By assumption, there is a basis $\{\vec v_1,\vec v_1',\dots,\vec v_r,\vec v_r'\}$ of $\L$ over $\O_\lm$ such that $(\L,q)$ is the orthogonal sum of the
rank-two lattices
\[
(V_i,q):=(\vec v_i\O_\lm+\vec v_i'\O_\lm,q)
\simeq_{\O_\lm}
\langle \gamma \rangle\otimes A(\alpha,\beta),
\]
for $1\leq i\leq r$. For each $i$, the above
isometry means that $q(\vec v_i,\vec v_i)=\gamma \alpha, \, 
q(\vec v_i,\vec v_i')=\gamma,$ and $q(\vec v_i',\vec v_i')=\gamma \beta.$ Let $\{\vec b_1,\dots,\vec b_f\}$ be an $\O_\lf$-basis of $\O_\lm$, and fix
$1\leq i\leq r$. Then, $(\vec e_1,\dots,\vec e_{2f})
:=
(\vec b_1\vec v_i,\dots,\vec b_f\vec v_i,
 \vec b_1\vec v_i',\dots,\vec b_f\vec v_i')$ 
is an $\O_\lf$-basis of $V_i$. For $1\leq j,k\leq f$, we have
\[
\left\{
\begin{array}{ll}
\tr_{\lm/\lf}\bigl(q(\vec e_j,\vec e_k)\bigr)
=
\tr_{\lm/\lf}(\gamma\alpha \vec b_j\vec b_k)
=
\alpha\,\tr_{\lm/\lf}(\gamma\vec b_j\vec b_k), \\[0.2cm]

\tr_{\lm/\lf}\bigl(q(\vec e_j,\vec e_{k+f})\bigr)
=
\tr_{\lm/\lf}(\gamma\vec b_j\vec b_k), \\[0.2cm]

\tr_{\lm/\lf}\bigl(q(\vec e_{j+f},\vec e_k)\bigr)
=
\tr_{\lm/\lf}(\gamma\vec b_j\vec b_k), \\[0.2cm]

\tr_{\lm/\lf}\bigl(q(\vec e_{j+f},\vec e_{k+f})\bigr)
=
\tr_{\lm/\lf}(\gamma\beta \vec b_j\vec b_k)
=
\beta\,\tr_{\lm/\lf}(\gamma\vec b_j\vec b_k).
\end{array}
\right.
\]
Here we used that $\alpha, \beta\in\O_\lf$, so they can be pulled out of the trace. Since $\tr_{\lm/\lf}$ preserves the orthogonality of the $V_i$'s, we
obtain the decomposition claimed.
\end{demo}

Next we prove a slight generalization of \cite[Lemma 1.11]{mantillagenus} to
arbitrary subfields, not necessarily $\Q_2$. This will be useful when combining
the previous lemma with the decompositions computed in
Section~\ref{sec:tot-ram-dyadic}.

\begin{lemma}\label{lem:hyperbolic-tensor-trace}
Let $(\L,q)$ be an unimodular $\O_\lf$-lattice of rank $r$. Then,
\[ \mathbb{H} \otimes (\L,q) \simeq_{\O_\lf} \mathbb{H}^r. \]
\end{lemma}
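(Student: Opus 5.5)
Since $\L$ is unimodular and $\H$ is unimodular, the tensor product $\H\otimes\L$ is unimodular of rank $2r$. I would not diagonalize $\L$, which may be impossible over a dyadic ring. Instead, I would write the tensor product directly in a basis adapted to the hyperbolic structure and then invoke O'Meara's characterization of hyperbolic spaces among unimodular lattices.

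Let $\{x,y\}$ be the standard basis of $\H$, so $x,y$ are isotropic with $B(x,y)=1$, and let $\{\ell_1,\dots,\ell_r\}$ be an $\O_\lf$-basis of $\L$ with Gram matrix $G$. Then $\H\otimes\L = (x\otimes\L)\oplus(y\otimes\L)$. By the definition of the tensor form,
\[
(q_\H\otimes q)(x\otimes\ell_i,\,x\otimes\ell_j)=q_\H(x,x)\,q(\ell_i,\ell_j)=0,
\]
and the same holds for $y$. Hence $X:=x\otimes\L$ and $Y:=y\otimes\L$ are totally isotropic direct summands of rank $r$, and the Gram matrix of $\H\otimes\L$ in this basis is $\begin{psmallmatrix}0&G\\G&0\end{psmallmatrix}$.

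Since $\L$ is unimodular, $G\in\GL_r(\O_\lf)$. I would therefore replace the basis $\{y\otimes\ell_j\}$ of $Y$ by $\{y_k\}$, where
\[
y_k=\sum_j (G^{-1})_{jk}\, y\otimes\ell_j .
\]
This is an invertible change of basis, and the pairing matrix between $\{x\otimes\ell_i\}$ and $\{y_k\}$ becomes $G\,G^{-1}=I_r$. Since $X$ and $Y$ remain totally isotropic, the new Gram matrix is $\begin{psmallmatrix}0&I_r\\I_r&0\end{psmallmatrix}$. Regrouping the basis into the pairs $\{x\otimes\ell_i,\,y_i\}$ exhibits $\H\otimes\L$ as an orthogonal sum of $r$ copies of $\H$, with all diagonal entries equal to zero.

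I expect no real obstacle. The only delicate point in the dyadic case is that the diagonal entries must be exactly $0$, not merely lie in $2\O_\lf$. This is guaranteed because $q_\H(x,x)=q_\H(y,y)=0$ exactly. If needed, one could alternatively cite \cite[93:18]{o2013introduction}, which states that a unimodular lattice with a totally isotropic direct summand of half rank is $\H^r$.
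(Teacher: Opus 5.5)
Your proof is correct. Its key computation is the same as the paper's: the change of basis $T=\begin{psmallmatrix} I & 0\\ 0 & G^{-1}\end{psmallmatrix}$ turns $\begin{psmallmatrix} 0 & G\\ G & 0\end{psmallmatrix}$ into $\begin{psmallmatrix} 0 & I\\ I & 0\end{psmallmatrix}$.

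The difference is the reduction step. The paper first invokes O'Meara's structure theorem \cite[93:15]{o2013introduction} to split $\L$ into rank-one summands $\langle u\rangle$ and rank-two unimodular summands. It then treats the two types separately: a rescaling $v\mapsto u^{-1}v$ for $\langle u\rangle$, and the block change of basis for the rank-two pieces. You apply the block change of basis directly to a full Gram matrix $G\in\GL_r(\O_\lf)$. This shows the appeal to the structure theorem is unnecessary, since the paper's rank-two computation never uses $r=2$. Your route is shorter and more general: it works verbatim for any free unimodular symmetric bilinear module over a commutative ring, dyadic or not. The paper's reduction to blocks of rank at most two buys nothing extra here.

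One caveat concerns your closing remark, which your proof does not rely on. As you phrase it, the fallback statement ``a unimodular lattice with a totally isotropic direct summand of half rank is $\H^r$'' is false over a dyadic ring. For instance, $A(0,1)$ is unimodular and $e_1$ spans a totally isotropic direct summand of rank one. However, $\n(A(0,1))=\O_\lf$ while $\n(\H)=2\O_\lf$, so $A(0,1)\not\simeq_{\O_\lf}\H$. The missing hypothesis is $\n\subseteq 2\s$. It does hold for $\H\otimes\L$, since every vector has the form $x\otimes a+y\otimes b$ and has norm $2q(a,b)$. This norm condition, not whether the diagonal entries are exactly $0$ rather than merely in $2\O_\lf$, is the genuinely relevant dyadic point.
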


\begin{demo}
By \cite[93:15]{o2013introduction}, the unimodular lattice $(\L,q)$ is an
orthogonal sum of lattices of the following types: $\langle u \rangle$ with $u \in \O_\lf^\times$, hyperbolic planes
$\mathbb{H}$, or
$A(\alpha,\beta)$ with $(\alpha, \beta)\neq (0,0)$.
Therefore, it is enough to prove that
tensoring each of these building blocks with $\mathbb{H}$ yields an orthogonal
sum of hyperbolic planes.

By definition, 
$\mathbb{H}\otimes \langle u\rangle$ has a basis $\{\vec{v}, \vec{w}\}$ whose Gram matrix is $\begin{psmallmatrix}
0 & u \\ u & 0
\end{psmallmatrix}.$ The change of variables $\vec{v'} = u^{-1} \vec{v}$ yields the result. Now assume that $\L$ is a rank-two unimodular lattice with Gram matrix $G\in \GL_2(\O_\lf)$.
This covers both cases $\L=\mathbb{H}$ and $\L=A(\alpha,\beta)$. Then, $\mathbb{H}\otimes \L$ has Gram matrix
\[
\begin{pmatrix}
0&G\\
G&0
\end{pmatrix}.
\]
After the change of basis given by $T=
\begin{psmallmatrix}
I_2&0\\
0&G^{-1}
\end{psmallmatrix},$
the Gram matrix becomes
\[
{}^tT
\begin{pmatrix}
0&G\\
G&0
\end{pmatrix}
T
=
\begin{pmatrix}
0&I_2\\
I_2&0
\end{pmatrix},
\]
which is the Gram matrix of $\mathbb{H}^2$. Hence we obtain $\mathbb{H}\otimes \L \simeq_{\O_\lf} \mathbb{H}^2$, as claimed.
\end{demo}

\subsection{Gluing the decomposition} 
Similarly to Section \ref{sec:gluing-non-dyadic}, we fix a unit $\nu_\lm \in \O_\lm^\times$ such that $\pi_\lf := \nu_\lm \pi$ is a uniformizer of $\lf$, and we also set $\nu_\lf := \norm_{\lm/\lf}(\nu_\lm)$.

\subsubsection{Split case}

\begin{proposition}\label{glue-dyadic-split}
Assume that $2 \in \Pp$ and that $\Pp$ splits in $E$. Let $v_1,v_2 \in \Z$ and
write $v_1+v_2=qe+r$ for the Euclidean division of $v_1+v_2$ by $e$. Then, $\L_{v_1,v_2,\lf}$ is
$\O_\lf$-isometric to one of the following.
\begin{arabiclist}
    \item $\langle \pi_\lf^q\rangle \otimes \mathbb{H}^f
    \;\bigperp\;
    \langle \pi_\lf^{q+1}\rangle \otimes \mathbb{H}^{f(e-1)}$,
    if $r=0;$

    \item $\langle \pi_\lf^{q+1}\rangle \otimes \mathbb{H}^{f(e-r+1)}
    \;\bigperp\;
    \langle \pi_\lf^{q+2}\rangle \otimes \mathbb{H}^{f(r-1)}$,
    if $r>0$.
\end{arabiclist}
\end{proposition}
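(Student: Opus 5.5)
The plan is to start from Proposition~\ref{prop:totram-split-dyadic}, which already decomposes $\L_{v_1,v_2,\lm}$ over $\O_\lm$ into blocks of the form $\langle \pi^{t}\rangle\otimes\H^{m}$ with $t\in\{q,q+1,q+2\}$. Each block is then descended to $\O_\lf$ by composing the form with $\tr_{\lm/\lf}$, and the scaling factor is rewritten in terms of $\pi_\lf$. Orthogonality of the blocks over $\O_\lm$ is preserved after applying $\tr_{\lm/\lf}$, since vanishing cross-terms stay zero. So it suffices to treat one block $\langle \pi^t\rangle\otimes\H^m$ at a time and show it becomes $\langle\pi_\lf^t\rangle\otimes\H^{fm}$.

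For a single block, note that $\H=A(0,0)$ with $0\in\O_\lf$, so Lemma~\ref{lem:interm-dyadic} applies with $\alpha=\beta=0$ and $\gamma=\pi^t\in\O_\lm$ (for negative $t$, rescale first or note that the lemma's argument does not use integrality of $\gamma$). This gives
\[
(\langle\pi^t\rangle\otimes\H^m,\tr_{\lm/\lf}\circ q)\simeq_{\O_\lf}\bigperp_{i=1}^m \H\otimes(\O_\lm,q^{\pi^t}_{\lm/\lf}).
\]
Next, write $\pi^t=\pi_\lf^t\nu_\lm^{-t}$. Since $\pi_\lf\in\lf$, it can be pulled out of the trace, so
\[
(\O_\lm,q^{\pi^t}_{\lm/\lf})\simeq_{\O_\lf}\langle\pi_\lf^t\rangle\otimes(\O_\lm,q^{\nu_\lm^{-t}}_{\lm/\lf}).
\]
The lattice $(\O_\lm,q^{u}_{\lm/\lf})$, with $u=\nu_\lm^{-t}$ a unit, is unimodular of rank $f$. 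Its discriminant is $\norm_{\lm/\lf}(u)\,d_{\lm/\lf}$, a unit because $\lm/\lf$ is unramified; this is the discriminant part of the argument in Lemma~\ref{prop:decomp_unramified} and does not depend on $\p$ being non-dyadic. A unimodular lattice is modular with scale $\O_\lf$, so its scale is $\O_\lf$. Lemma~\ref{lem:hyperbolic-tensor-trace} then gives $\H\otimes(\O_\lm,q^u_{\lm/\lf})\simeq_{\O_\lf}\H^f$. Tensoring with $\langle\pi_\lf^t\rangle$ commutes with these isometries, so each block contributes $\langle\pi_\lf^t\rangle\otimes\H^{fm}$.

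Summing over the blocks gives both cases. For $r=0$, the blocks $(t,m)=(q,1)$ and $(q+1,e-1)$ give $\langle\pi_\lf^q\rangle\otimes\H^f\perp\langle\pi_\lf^{q+1}\rangle\otimes\H^{f(e-1)}$. For $r>0$, the blocks $(q+1,e-r+1)$ and $(q+2,r-1)$ give the corresponding result. The only delicate point is checking that $(\O_\lm,q^u_{\lm/\lf})$ is genuinely unimodular in the dyadic setting, which needs the discriminant computation rather than the non-dyadic diagonalization. That computation goes through for any unramified extension.
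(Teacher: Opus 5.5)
Your proof is correct and follows the same route as the paper: you descend the hyperbolic blocks of Proposition~\ref{prop:totram-split-dyadic} via Lemma~\ref{lem:interm-dyadic}, factor out $\pi_\lf^t$ using $\pi=\pi_\lf\nu_\lm^{-1}$, and conclude with Lemma~\ref{lem:hyperbolic-tensor-trace}. Your discriminant check that $(\O_\lm,q^{u}_{\lm/\lf})$ is unimodular, via $\norm_{\lm/\lf}(u)\,d_{\lm/\lf}$ being a unit, makes explicit a hypothesis of that last lemma which the paper leaves implicit in the split case.
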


\begin{demo}
We only treat the case $r=0$, the case $r>0$ being entirely similar. By
Proposition~\ref{prop:totram-split-dyadic}, one has the splitting $\langle \pi^q\rangle \otimes \mathbb{H}
\bigperp
\langle \pi^{q+1}\rangle \otimes \mathbb{H}^{e-1}$ over $\O_\lm.$
Applying Lemma~\ref{lem:interm-dyadic} to each hyperbolic summand, we obtain $\L_{v_1,v_2,\lf}\simeq_{\O_\lf}$
\[
\langle \pi_\lf^q \rangle \otimes \mathbb{H}\otimes \Big(\O_\lm,q^{{\nu_\lm}^{-q}}_{\lm/\lf}\Big)
\bigperp \left(
\langle \pi_\lf^{q+1} \rangle \otimes\bigperp_{i=1}^{e-1}\mathbb{H}\otimes \Big(\O_\lm,q^{{\nu_\lm}^{-(q+1)}}_{\lm/\lf}\Big) \right).
\]
Now Lemma~\ref{lem:hyperbolic-tensor-trace} shows that
\(\mathbb{H}\otimes (\O_\lm,q^{\nu_\lm^s}_{\lm/\lf})
\simeq_{\O_\lf} \mathbb{H}^f\)
for every \(s\in\Z\). Therefore,
\[
\L_{v_1,v_2,\lf}\simeq_{\O_\lf}
\langle \pi_\lf^q\rangle \otimes \mathbb{H}^f
\bigperp
\langle \pi_\lf^{q+1}\rangle \otimes \mathbb{H}^{f(e-1)},
\]
which is exactly the claimed decomposition when \(r=0\).
\end{demo}

\subsubsection{Inert or ramified case}

\begin{proposition}\label{glue-dyadic-inert-ramified}
Assume that $2 \in \Pp$, that $a \notin \Pp$, and let $v \in \Z$. Let
$\varepsilon:=v_\Pp(2)$ and let $0 \leq s \leq \varepsilon$ be as in
Lemma~\ref{lem}. Moreover, let $q,r$ be the quotient and remainder in the
Euclidean division of $2v$ by $e$, if $s=\varepsilon$ (resp. of
$\varepsilon-s+v$ by $e$, if $s<\varepsilon$). Assume furthermore that $f$
is odd, and fix a unit $\rho_\lf \in \O_\lf^\times$ such that
$\idd(1-4\rho_\lf)=4\O_\lf$ (\textit{c.f.}, Lemma~\ref{lem:rho-in-F}).

Then, $\L_{v,\lf}$ is $\O_\lf$-isometric to one of the following.
\begin{arabiclist}
    \item If $r=0$, then
    \[
    \left( \langle \pi_\lf^q\rangle \otimes
    A(2,2\rho_\lf)\otimes
    \Big(\O_\lm,q^{e\nu_\lm^{-q}}_{\lm/\lf}\Big)
    \right)
    \bigperp
    \langle \pi_\lf^{q+1}\rangle \otimes \mathbb{H}^{f(e-1)}.
    \]

    \item If $r>0$ is odd, then
    \[
    \langle \pi_\lf^{q+1}\rangle \otimes
    \left(
    \left(
    A(2,2\rho_\lf)\otimes
    \Big(\O_\lm,q^{e\nu_\lm^{-(q+1)}}_{\lm/\lf}\Big)
    \right)
    \bigperp
    \mathbb{H}^{f(e-r)}
    \right)
    \bigperp
    \langle \pi_\lf^{q+2}\rangle \otimes
    \mathbb{H}^{f(r-1)}.
    \]

    \item If $r>0$ is even, then
    \[
    \langle \pi_\lf^{q+1}\rangle \otimes
    \mathbb{H}^{f(e-r+1)}
    \bigperp
    \langle \pi_\lf^{q+2}\rangle \otimes
    \left(
    \left(
    A(2,2\rho_\lf)\otimes
    \Big(\O_\lm,q^{e\nu_\lm^{-(q+2)}}_{\lm/\lf}\Big)
    \right)
    \bigperp
    \mathbb{H}^{f(r-2)}
    \right).
    \]
\end{arabiclist}
Here, a summand involving $\mathbb H^0$ is understood to be omitted.
\end{proposition}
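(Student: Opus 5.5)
The plan is to mirror the proof of Proposition~\ref{glue-dyadic-split}: take the $\O_\lm$-splitting of $\L_{v,\lm}$ from Proposition~\ref{prop:dyadic-totram-inert}, descend each orthogonal block to $\O_\lf$ via Lemma~\ref{lem:interm-dyadic}, and simplify the hyperbolic pieces with Lemma~\ref{lem:hyperbolic-tensor-trace}.

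\textbf{Replacing $\rho_\lm$ by $\rho_\lf$.} Since $f$ is odd, Lemma~\ref{lem:rho-in-F} shows that $\rho_\lf$, viewed in $\O_\lm^\times$, satisfies $\idd(1-4\rho_\lf)=4\O_\lm$. Applying Lemma~\ref{lem:change-delta-dyadic} to this choice gives $\idd(1-4\rho_\lf)=4\O_\lk$. Hence we may take $\rho_\lm:=\rho_\lf$ in Lemma~\ref{lem:gram-dyadic-tot-ram} and Proposition~\ref{prop:dyadic-totram-inert}. This matters because Lemma~\ref{lem:interm-dyadic} requires $\alpha,\beta\in\O_\lf$, and $A(2,2\rho_\lf)$ now meets that condition. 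This substitution is the only point where the parity of $f$ enters, and it is the step I expect to need the most care. One must check that Lemma~\ref{lem:inert-ram-dyadic} (via Lemma~\ref{lem:dyadic-rank-two}) accepts \emph{any} $\rho\in\O_\lk^\times$ with $\idd(1-4\rho)=4\O_\lk$. It does, because O'Meara 93:11 gives $A(2,2\rho)$ up to isometry for every such $\rho$.

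\textbf{Rewriting the scaling factors.} In the blocks of Proposition~\ref{prop:dyadic-totram-inert}, write $e\pi^{t}=\pi_\lf^{t}\cdot e\nu_\lm^{-t}$ with $t\in\{q,q+1,q+2\}$. Since $\pi_\lf\in\O_\lf$, it can be pulled out of $\tr_{\lm/\lf}$. Each block $\langle e\pi^t\rangle\otimes A(2,2\rho_\lf)$ therefore has the form $\langle\gamma\rangle\otimes A(\alpha,\beta)$ with $\gamma=\pi_\lf^t e\nu_\lm^{-t}$ and $\alpha,\beta\in\O_\lf$. Lemma~\ref{lem:interm-dyadic} with $r=1$ turns it into $\langle\pi_\lf^t\rangle\otimes A(2,2\rho_\lf)\otimes(\O_\lm,q^{e\nu_\lm^{-t}}_{\lm/\lf})$, which is the non-hyperbolic summand in each of the three cases.

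\textbf{Hyperbolic blocks.} For $\langle e\pi^t\rangle\otimes\H^m$, apply Lemma~\ref{lem:interm-dyadic} with $A(0,0)=\H$ to get $m$ copies of $\langle\pi_\lf^t\rangle\otimes\H\otimes(\O_\lm,q^{e\nu_\lm^{-t}}_{\lm/\lf})$. The lattice $(\O_\lm,q^{u}_{\lm/\lf})$ is unimodular of rank $f$ for any unit $u$, since $\lm/\lf$ is unramified and its discriminant is $\norm(u)d_{\lm/\lf}$, a unit. Lemma~\ref{lem:hyperbolic-tensor-trace} then gives $\H^f$ for each copy, so the block contributes $\langle\pi_\lf^t\rangle\otimes\H^{fm}$.

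\textbf{Assembling the cases.} Collect the summands case by case according to $r=0$, $r>0$ odd and $r>0$ even, exactly as listed in Proposition~\ref{prop:dyadic-totram-inert}. The $\H$-exponents become $f(e-1)$, $f(e-r)$, $f(r-1)$, $f(e-r+1)$ and $f(r-2)$. Empty summands ($\H^0$) are dropped, which gives the three formulas of the statement.
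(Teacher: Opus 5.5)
Your argument is the paper's own proof, step for step. You swap $\rho_\lm$ for $\rho_\lf$ via Lemma~\ref{lem:rho-in-F}, write $e\pi^t=\pi_\lf^t\cdot e\nu_\lm^{-t}$, descend the anisotropic block with Lemma~\ref{lem:interm-dyadic}, and descend the hyperbolic blocks with Lemmas~\ref{lem:interm-dyadic} and~\ref{lem:hyperbolic-tensor-trace}. You are also slightly more explicit than the paper in lifting $\idd(1-4\rho_\lf)=4\O_\lm$ to $4\O_\lk$ via Lemma~\ref{lem:change-delta-dyadic}. In the inert case $s=\varepsilon$ this is a complete proof.

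The gap is in the input you yourself flagged as delicate, and the paper has the same gap. When $s<\varepsilon$, Lemma~\ref{lem:inert-ram-dyadic}(2b) is false, and with it Lemma~\ref{lem:gram-dyadic-tot-ram}(2) and Proposition~\ref{prop:dyadic-totram-inert}. You checked that the choice of $\rho$ is immaterial, which is true. You did not check that the classification applies at all. Identifying an anisotropic binary unimodular lattice with $A(2,2\rho)$ (O'Meara 93:11) is valid only for even lattices, i.e.\ $\n(L)\subseteq 2\s(L)$. Lemma~\ref{lem:dyadic-rank-two} is false without this hypothesis: $\langle 1,1\rangle$ over $\Z_2$ is unimodular and anisotropic, but has norm $\Z_2$. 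For $\O_{\mathcal E}$ with $\tr(x\overline y)$ one has $h(x,x)=2\norm(x)$, so $\n=\P^{\varepsilon}$ while $\s=\P^{\varepsilon-s}$. The lattice is therefore even only when $s=\varepsilon$. Moreover, its Gram determinant is $4a/\Pi^{2s}$, of square class $a$, whereas $A(2,2\rho)$ has class $4\rho-1=-\Delta$. These agree exactly when $\lk(\sqrt{-a})/\lk$ is unramified, so for $s<\varepsilon$ even the underlying quadratic spaces differ.

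Here is a concrete counterexample to the statement itself. Take $F=K=\Q$, $a=1$, $E=\Q(i)$, $\p=2\Z$ and $\ida=\Z[i]$. Then $e=f=1$, $\varepsilon=1$ and $s=0$, since no odd $\beta$ satisfies $\beta^2\equiv -1 \bmod 4$. For $v=0$ one gets $q=1$ and $r=0$. Case~1 then predicts $\L_{0,\lf}\simeq\langle\pi\rangle\otimes A(2,2\rho_\lf)$ for a uniformizer $\pi$ of $\Q_2$. That lattice has norm $4\Z_2$ and discriminant in $3\,\Q_2^{\times 2}$. But $h(x_1+x_2i,\,x_1+x_2i)=2(x_1^2+x_2^2)$, so $\L_{0,\lf}=\langle 2,2\rangle$, which has norm $2\Z_2$ and square discriminant.

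So your proof, like the paper's, establishes the proposition only for $s=\varepsilon$. For $s<\varepsilon$ you would first have to replace the $\lk$-level block by the correct odd $\P^{\varepsilon-s+v}$-modular binary lattice, with norm $\P^{\varepsilon+v}$ and discriminant class $a$, and then redo the descent from there.
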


\begin{demo}
Since $f$ is odd, Lemma~\ref{lem:rho-in-F} shows that the unit $\rho_\lf$
still satisfies $\idd(1-4\rho_\lf)=4\O_\lm$ when viewed in
$\O_\lm^\times$. Hence, in Proposition~\ref{prop:dyadic-totram-inert}, we
may take $\rho_\lm=\rho_\lf$.

Recall that $\pi_\lf=\nu_\lm\pi$. For any $t\in\Z$, the anisotropic block
$\langle e\pi^t\rangle\otimes A(2,2\rho_\lf)$ may be written as
$\langle\pi_\lf^t\rangle\otimes
\big(\langle e\nu_\lm^{-t}\rangle\otimes A(2,2\rho_\lf)\big)$.
Applying Lemma~\ref{lem:interm-dyadic} with
$\gamma=e\nu_\lm^{-t}$, $\alpha=2$ and $\beta=2\rho_\lf$, it descends to
\[
\langle \pi_\lf^t\rangle \otimes
\left(
A(2,2\rho_\lf)\otimes
\Big(\O_\lm,q^{e\nu_\lm^{-t}}_{\lm/\lf}\Big)
\right).
\]

Similarly, a hyperbolic block
$\langle e\pi^t\rangle\otimes\mathbb H$ descends, by
Lemma~\ref{lem:interm-dyadic} and
Lemma \ref{lem:hyperbolic-tensor-trace}, to
$\langle\pi_\lf^t\rangle\otimes\mathbb H^f$. Indeed,
$e\nu_\lm^{-t}$ is a unit and
$\big(\O_\lm,q^{e\nu_\lm^{-t}}_{\lm/\lf}\big)$ is therefore an
unimodular $\O_\lf$-lattice of rank $f$. We now distinguish the three cases.

$1.$ If $r=0$, Proposition~\ref{prop:dyadic-totram-inert} gives one
anisotropic block at scale $\p_\lm^q$ and $e-1$ hyperbolic planes at
scale $\p_\lm^{q+1}$. Applying the preceding observations gives the result in this case.

$2.$ If $r>0$ is odd, the anisotropic block occurs at scale
$\p_\lm^{q+1}$, together with $e-r$ hyperbolic planes at the same scale
and $r-1$ hyperbolic planes at scale $\p_\lm^{q+2}$. This gives the
second decomposition. 

$3.$ Finally, if $r>0$ is even, there are $e-r+1$
hyperbolic planes at scale $\p_\lm^{q+1}$, while the anisotropic block
and $r-2$ hyperbolic planes occur at scale $\p_\lm^{q+2}$. This gives
the last decomposition.
\end{demo}

\begin{remark}
How explicit can the remaining non-hyperbolic block be made?
In each of the above decompositions, it has the form
\[
\langle \pi_\lf^t\rangle \otimes
\left(
A(2,2\rho_\lf)\otimes
(\O_\lm,q^{u_\lm}_{\lm/\lf})
\right),
\]
for some $t\in\Z$ and $u_\lm\in\O_\lm^\times$. If $u_\lm$ is a square,
then $(\O_\lm,q^{u_\lm}_{\lm/\lf})\simeq_{\O_\lf}
(\O_\lm,q_{\lm/\lf})$, and the latter can be described explicitly from
its dyadic invariants. Indeed, it has weight $2\O_\lf$. Since $f$ is odd,
set $\delta=(-1)^{(f-1)/2}d_{\lm/\lf}\in\O_\lf^\times$. If $f\geq 3$,
\cite[Proposition~3.3.11(1)]{kirschmer2016definite} gives a unit
$v_\p\in\O_\lf^\times$ such that
\[
(\O_\lm,q_{\lm/\lf})
\simeq_{\O_\lf}
\langle\delta\rangle
\perp A(2v_\p,0)
\perp \H^{(f-3)/2}.
\]
If \(f=1\), then \(\lm=\lf\) and $(\O_\lm,q_{\lm/\lf})\simeq_{\O_\lf}\langle1\rangle,$
so the factor is \(A(2,2\rho_\lf)\). 

Thus, whenever $u_\lm$ is a square, the remaining non-hyperbolic
component contains an explicit anisotropic rank-two factor.
\end{remark}

\subsubsection{Jordan decomposition in the dyadic case}

\begin{theorem}\label{thm:dyadic-jordan}
Assume that $\p$ is dyadic and that, for every prime ideal
$\Pp\mid\p$ which is not split in $E$, one has $a\notin\Pp$ and the
residue degree $f_\Pp=[\O_K/\Pp:\O_F/\p]$
is odd. Let $\ida\subset E$ be a non-zero fractional ideal, and set
$\ida_\Pp:=\ida\otimes_{\O_K}\O_{K_\Pp}$ for every $\Pp\mid\p$. Then, locally at $\p$,
\[
\L_{\ida,F}\simeq_{\O_{F_\p}}
\bigperp_{\Pp\mid\p}\L_{\ida_\Pp,F_\p}.
\]
If $\Pp$ splits in $E$ and
$\ida_\Pp=\Qq_1^{v_1}\Qq_2^{v_2}$, the corresponding factor is described
by Proposition~\ref{glue-dyadic-split}. If $\Pp$ is inert or ramified
and $\ida_\Pp=\Qq^v$, it is described by
Proposition~\ref{glue-dyadic-inert-ramified}.

Moreover, the Jordan decomposition of $\L_{\ida,F}$ at $\p$ is obtained by grouping
together the modular blocks having the same scale.
\end{theorem}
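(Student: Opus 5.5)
The proof follows the same pattern as Theorem~\ref{thm:jordan-decomp-nondyadic}, with the dyadic local results substituted for the non-dyadic ones. The first step is Lemma~\ref{lemma:orthoplit}, which holds for any prime $\p$ and in particular does not use the non-dyadic hypothesis. It gives the orthogonal splitting
\[
\bigl(\ida\otimes_{\O_F}\O_{F_\p},h_{E_\p/F_\p}\bigr)\simeq_{\O_{F_\p}}\bigperp_{\Pp\mid\p}\bigl(\ida_\Pp,h_{E_\Pp/F_\p}\bigr),
\]
and each summand is by definition $\L_{\ida_\Pp,F_\p}$. Since $\ida_\Pp$ only involves primes of $\O_E$ above $\Pp$, we may write $\ida_\Pp=\Qq_1^{v_1}\Qq_2^{v_2}$ or $\ida_\Pp=\Qq^v$ according to the splitting type of $\Pp$ in $E$. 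This proves the first displayed isometry.

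Next we identify each factor, using the hypothesis on $\Pp$ to check that the relevant proposition applies.

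If $\Pp$ splits in $E$, Proposition~\ref{glue-dyadic-split} applies directly, since it needs no further assumption.

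If $\Pp$ is inert or ramified, the hypotheses give $a\notin\Pp$ and odd $f_\Pp$. These are exactly the assumptions of Proposition~\ref{glue-dyadic-inert-ramified}. The oddness of $f_\Pp$ is what allows, via Lemma~\ref{lem:rho-in-F}, the choice $\rho_\lm=\rho_\lf$. The descent also uses tameness of $K/F$ at $\p$ (so $e$ is odd and a unit), which holds by the standing convention of Section~\ref{sec:notation-trace}.

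For the final assertion, note that every block produced by Propositions~\ref{glue-dyadic-split} and~\ref{glue-dyadic-inert-ramified} has the form $\langle\pi_\lf^t\rangle\otimes\M$ with $\M$ unimodular over $\O_\lf$, so each block is $\p^t$-modular. To see that $\M$ is unimodular:
\begin{itemize}
\item[(i)] $\H^m$ is unimodular.
\item[(ii)] $A(2,2\rho_\lf)$ has unit determinant $4\rho_\lf-1$, so it is unimodular.
\item[(iii)] $(\O_\lm,q^{u}_{\lm/\lf})$, with $u$ a unit, is unimodular because $\lm/\lf$ is unramified; its discriminant is $\norm_{\lm/\lf}(u)\,d_{\lm/\lf}$, a unit.
\item[(iv)] The tensor product of unimodular lattices is unimodular, as its Gram matrix is the Kronecker product of invertible matrices over $\O_\lf$.
\item[(v)] An orthogonal sum of $\p^t$-modular lattices is $\p^t$-modular.
\end{itemize}
Grouping together, across all $\Pp\mid\p$, the blocks with the same $t$ therefore gives a decomposition $\bigperp_t\L_t$ with $\L_t$ $\p^t$-modular and strictly increasing scales, which is a Jordan decomposition by definition.

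The main point to verify is the dyadic modularity bookkeeping. Over a dyadic field, modularity of $\langle\pi_\lf^t\rangle\otimes\M$ requires $\M=\M^\sharp$, not merely that $\M$ has unit scale. Unit determinant together with integrality gives $\M=\M^\sharp$, so the Kronecker-product argument in (iv) suffices. We also check that the uniformizer renormalisation $\pi=\nu_\lm^{-1}\pi_\lf$ only changes blocks by unit factors, which Lemma~\ref{lem:interm-dyadic} absorbs into $\gamma$; this keeps the scales $\p^t$ unchanged. No classification statement is claimed in the theorem, so the non-uniqueness of dyadic Jordan decompositions causes no further difficulty.
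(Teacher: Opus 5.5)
\textbf{Verdict: there is a genuine gap, and it is inherited from the paper.}

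Structurally your argument is the paper's proof: Lemma~\ref{lemma:orthoplit}, then Propositions~\ref{glue-dyadic-split} and~\ref{glue-dyadic-inert-ramified}. Your extra checks (i)--(v), showing that each block $\langle\pi_\lf^t\rangle\otimes\M$ is $\p^t$-modular, are correct.

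The gap is at a step you treat as a mere hypothesis check: invoking Proposition~\ref{glue-dyadic-inert-ramified} for a prime $\Pp$ that is \emph{ramified} in $E$ with $a\notin\Pp$, i.e.\ $s<\varepsilon$. That case rests on Lemma~\ref{lem:inert-ram-dyadic}(2b), which applies Lemma~\ref{lem:dyadic-rank-two}. That lemma is false without the extra hypothesis $\n(\L)\subseteq 2\s(\L)$, which is the setting of O'Meara~93:11. For example, $\langle 1,1\rangle$ over $\Z_2$ is unimodular with anisotropic space, yet its norm is $\Z_2\neq 2\Z_2=\n(A(2,2\rho))$.

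In the ramified case this hypothesis really does fail. Since $\tr_{E_\Pp/\lk}(x\overline{x})=2\norm(x)$, and norms of elements of $\Qq^v$ generate $\P^{v}$, one gets $\n(\L_{v,\lk})=\P^{\varepsilon+v}$. On the other hand, $\n\bigl(\langle\Pi^{\varepsilon-s+v}\rangle\otimes A(2,2\rho)\bigr)=\P^{2\varepsilon-s+v}$. These coincide only when $s=\varepsilon$, i.e.\ in the inert case.

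So the description of ramified factors is false, not merely unproved. Take $F=K=\Q$, $a=1$, $E=\Q(i)$, $\p=2$, $\ida=\Z[i]$. All hypotheses hold: $2$ is ramified in $E$, $a\notin\Pp$, $f=1$, $\varepsilon=1$, $s=0$.
\begin{itemize}
\item Proposition~\ref{glue-dyadic-inert-ramified}, with $q=1$ and $r=0$, predicts $\langle 2u\rangle\otimes A(2,2\rho)$ for some unit $u$. This has norm $4\Z_2$ and discriminant class $4\rho-1=-\Delta$.
\item The actual Gram matrix of $\{1,i\}$ for $\tr(x\overline{y})$ is $\mathrm{diag}(2,2)$. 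This has norm $2\Z_2$ and discriminant class $1$.
\item Moreover $-\Delta\not\equiv 1$, because $\Q_2(\sqrt{\Delta})$ is unramified while $\Q_2(\sqrt{-1})$ is not. So even the ambient quadratic spaces differ.
\end{itemize}

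Your argument, like the paper's one-line proof, is therefore valid only under the stronger hypothesis that every non-split $\Pp\mid\p$ is inert in $E$, i.e.\ $s_\Pp=\varepsilon_\Pp$. In that case $\L_{v,\lk}$ is, up to the scaling $\Pi^{2v}$, even unimodular and anisotropic, so 93:11 applies. Lemma~\ref{lem:gram-dyadic-tot-ram}, Proposition~\ref{prop:dyadic-totram-inert}, and the descent with $\rho_\lm=\rho_\lf$ (Lemma~\ref{lem:rho-in-F}) then go through.

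For ramified $\Pp$, the scale $\P^{\varepsilon-s+v}$ computed in Lemma~\ref{lem:inert-ram-dyadic} is still right. However, the binary block has to be reclassified (it is odd when $s=0$), and the descent through $\lk/\lm$ and $\lm/\lf$ must be redone from that classification. This, rather than the modularity bookkeeping you single out as the main point, is where the real difficulty lies.
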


\begin{demo}
The result follows from Lemma~\ref{lemma:orthoplit} and
Propositions~\ref{glue-dyadic-split} and
\ref{glue-dyadic-inert-ramified}, exactly as in the proof of
Theorem~\ref{thm:jordan-decomp-nondyadic}.
\end{demo}

Under the hypotheses of Theorem~\ref{thm:dyadic-jordan}, the local factor
attached to a prime $\Pp\mid\p$ is determined by its splitting type in $E$
and by the following integer. Set
\[
u_\Pp(\ida):=
\begin{cases}
v_{\Qq_1}(\ida)+v_{\Qq_2}(\ida),
& \text{if } \Pp\O_E=\Qq_1\Qq_2 \text{ splits in }E;\\[1ex]
2v_{\Qq}(\ida),
& \text{if $\Pp$ is non-split and $s_\Pp=\varepsilon_\Pp$};\\[1ex]
\varepsilon_\Pp-s_\Pp+v_{\Qq}(\ida),
& \text{if $\Pp$ is non-split and $s_\Pp<\varepsilon_\Pp$}.
\end{cases}
\]
where $\Qq$ denotes the unique prime of $E$ above a non-split prime
$\Pp$, and $\varepsilon_\Pp=v_\Pp(2)$, and $s_\Pp$ is defined in
Lemma~\ref{lem}.

\begin{corollary}[Dyadic criterion for $\p$-separated ideals]
\label{cor:dyadic-p-separated}
Assume that $\p$ is dyadic, that $K/F$ is Galois, and that the fractional
ideals $\ida$ and $\idb$ are $\p$-separated. Let $f$ be the common residue
degree of the primes above $\p$, and assume that $f$ is odd and that
$a\notin\Pp$ for every non-split prime $\Pp\mid\p$.

Then, $\L_{\ida,F}$ and $\L_{\idb,F}$ are locally isometric at $\p$
if and only if the multisets
\[
\bigl\{u_\Pp(\ida)\bigr\}_{\substack{\Pp\mid\p\\
\Pp\text{ split in }E}}
\qquad\text{and}\qquad
\bigl\{u_\Pp(\ida)\bigr\}_{\substack{\Pp\mid\p\\
\Pp\text{ non-split in }E}}
\]
coincide respectively with the corresponding multisets for $\idb$.
\end{corollary}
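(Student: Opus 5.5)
The plan mirrors the proof of Corollary~\ref{cor:local-criterion-nondyadic}. By Theorem~\ref{thm:dyadic-jordan}, which applies because $f$ is odd and $a\notin\Pp$ for every non-split $\Pp$, we have
\[
\L_{\ida,F_\p}\simeq_{\O_{F_\p}}\bigperp_{\Pp\mid\p}\L_{\ida_\Pp,F_\p},
\]
and similarly for $\idb$. Since $K/F$ is Galois, all primes $\Pp\mid\p$ share the same $e$ and $f$, and the local extensions $K_\Pp/F_\p$ are $F_\p$-isomorphic. The plan is first to check that, in the Galois situation, $s_\Pp$ and $\varepsilon_\Pp$ are also independent of $\Pp$; this follows by transporting the construction of Lemma~\ref{lem} along a Galois conjugation. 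Granting this, Propositions~\ref{glue-dyadic-split} and~\ref{glue-dyadic-inert-ramified} show that the isometry class of each factor depends only on the splitting type of $\Pp$ (split or non-split) and on $u_\Pp(\ida)$. The ``if'' direction then follows immediately: equal multisets give a matching of factors with isometric pairs, and hence an isometric orthogonal sum.

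For the ``only if'' direction, assume $\L_{\ida,F_\p}\simeq\L_{\idb,F_\p}$. Jordan decompositions have uniquely determined scales and ranks, and $\p$-separatedness ensures that each scale comes from a single prime $\Pp$. So the set of scales, and the rank at each scale, of the whole lattice recovers the disjoint union of the sets $\mathrm{Sc}_\Pp$ together with the ranks. I then need two invariants:
\begin{itemize}
\item[(i)] an invariant of each Jordan component that distinguishes split from non-split contributions;
\item[(ii)] a way to recover $u_\Pp$ from the scales and ranks.
\end{itemize}
For (ii), in the split case the scales $q,q+1$ or $q+1,q+2$ carry ranks $2f,2f(e-1)$ or $2f(e-r+1),2f(r-1)$, from which $q$ and $r$, hence $u_\Pp=qe+r$, are read off. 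The non-split case has the same ranks, so again $u_\Pp$ is determined.

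For (i), a split factor consists only of hyperbolic planes, so it is an even lattice: its norm satisfies $\n=2\s$ and its ambient space is hyperbolic. A non-split factor contains exactly one block $\langle\pi_\lf^t\rangle\otimes A(2,2\rho_\lf)\otimes(\O_\lm,q^{u}_{\lm/\lf})$. The plan is to show that the component at that scale is not isometric to $\langle\pi_\lf^t\rangle\otimes\H^{f}\perp\cdots$, and to do this by comparing ambient quadratic spaces, or equivalently the discriminant together with the Hasse invariant. The space $A(2,2\rho)$ is the norm form of the unramified quadratic extension. Tensoring it with a rank-$f$ form of odd rank $f$ yields a space whose class in the Witt group is the class of $A(2,2\rho)$ times an odd-rank form, and this is nonzero since $A(2,2\rho)$ is anisotropic and odd multiples stay nontrivial. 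Concretely, its discriminant is $\disc(A(2,2\rho))^{f}\cdot(\ldots)^2\equiv -(1-4\rho)^f$, which equals $-\Delta$ for $f$ odd. A sum of hyperbolic planes of rank $2m$ has discriminant $(-1)^m$. Comparing these two values, after accounting for the hyperbolic planes at the same scale, separates the two types, because $\Delta$ is not a square.

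I expect this discriminant comparison at a common scale to be the main obstacle. It requires carefully tracking the signs coming from the $\H$ summands and the parity of the ranks, and possibly a fallback to the Hasse invariant if discriminants happen to agree. Once (i) and (ii) are established, the multisets of $u_\Pp$ for split and for non-split primes are recovered from the isometry class, which proves the corollary.
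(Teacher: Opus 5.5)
Your outline follows the paper's argument: split $\ida\otimes\O_{F_\p}$ over the primes $\Pp\mid\p$, show that each factor is determined by its type and by $u_\Pp$, and separate split from non-split factors by the class of $\Delta$. Your recovery of $u_\Pp$ from scales and ranks usefully spells out the injectivity that the paper's ``only if'' direction leaves implicit. The step you add, however, is false. $\varepsilon_\Pp=e\,v_\p(2)$ is indeed constant, but Galois transport does not make $s_\Pp$ independent of $\Pp$. The reason is that $\sigma\in\mathrm{Gal}(K/F)$ moves $a$ as well as $\Pp$, so it only relates $s_\Pp$ computed for $a$ to $s_{\sigma\Pp}$ computed for $\sigma(a)$; this is also why the splitting type in $E$ can vary with $\Pp$ at all. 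For example, take $F=\Q$, $K=\Q(\sqrt{17})$ and $a=12+\sqrt{17}$, which is totally positive of prime norm $127$. Then $2\O_K=\Pp_1\Pp_2$ with $e=f=1$. The two images of $-a$ in $\Q_2$ are $\equiv 5$ and $\equiv 3 \pmod 8$, so $\Pp_1$ is inert ($s=\varepsilon=1$) and $\Pp_2$ is ramified ($s=0$).

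Within the paper's framework you would not need this invariance, since Proposition~\ref{glue-dyadic-inert-ramified} involves only $(q,r)$. But that proposition is exactly where your argument and the paper's both break. For $\Pp$ ramified with $a\notin\Pp$ (i.e.\ $s<\varepsilon$), part 2(b) of Lemma~\ref{lem:inert-ram-dyadic} is wrong. Indeed $h(1,1)=2$ and $h(x,x)=2x\overline{x}$, so $\n(\L_{0,\lk})=2\O_\lk$. By contrast, $\langle\Pi^{\varepsilon-s}\rangle\otimes A(2,2\rho)$ has norm $2\P^{\varepsilon-s}$. The underlying problem is that Lemma~\ref{lem:dyadic-rank-two} tacitly needs $\n=2\s$: for instance, $\langle 1,1\rangle$ over $\Z_2$ is unimodular and anisotropic but odd. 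Hence ramified factors are not described by $u_\Pp$ as claimed, and the statement fails as written. In the example above, take $\ida=\Qq_1\Qq_2^3$ and $\idb=\Qq_1^2\Qq_2$. Both are $2$-separated (scale sets $\{2\},\{4\}$ and $\{4\},\{2\}$), there are no split primes, and both non-split multisets equal $\{2,4\}$. Yet $\L_{\ida,\Q}\otimes\Z_2\simeq\langle 4\rangle\otimes A(2,2\rho)\perp(\text{odd } 2^4\text{-modular})$ has norm $8\Z_2$, while $\L_{\idb,\Q}\otimes\Z_2\simeq\langle 16\rangle\otimes A(2,2\rho)\perp(\text{odd } 2^2\text{-modular})$ has norm $4\Z_2$, so they are not isometric.

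There is also a separate gap in your step (i), shared with the paper's last paragraph. You compare ambient spaces or discriminants of individual Jordan components, but over a dyadic field these are not isometry invariants in general; for example, $\langle 1,2\rangle\simeq\langle 3,6\rangle$ over $\Z_2$. So this comparison needs its own justification. One is available when all blocks are even, via the Arf invariant of the residual form $q/2\pi^t$ on each component.
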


\begin{demo}
As in the proof of Corollary~\ref{cor:local-criterion-nondyadic},
$\p$-separation prevents modular blocks arising from distinct primes above
$\p$ from merging. Since $K/F$ is Galois, all the corresponding local
extensions have the same ramification index and residue degree.
Propositions~\ref{glue-dyadic-split} and
\ref{glue-dyadic-inert-ramified} then show that, for a fixed splitting
type, the isometry class of the $\Pp$-factor depends only on
$u_\Pp(\ida)$.

It remains to distinguish the two splitting types. A split factor is an
orthogonal sum of hyperbolic planes, whereas a non-split factor has one
non-hyperbolic modular component. Since $f$ is odd, there exists a unit $u_\lm\in\O_\lm^\times$ such that this non-hyperbolic
component has the form
\[
A(2,2\rho_\lf)\otimes
\bigl(\O_\lm,q^{u_\lm}_{\lm/\lf}\bigr).
\]
Moreover, its discriminant differs from that of a hyperbolic lattice of the same rank by the
non-square class $\Delta = 1-4\rho_\lf$. Thus, split and non-split factors cannot be
matched. The result follows.
\end{demo}

\section*{Acknowledgements}
This work was
supported by the France 2030 program, managed by the French National Research
Agency under grant agreement No. ANR-22-PETQ-0008 PQTLS. The author thank Alice Pellet-Mary and Renaud Coulangeon for helpful disscusions and comments on the draft.

\bibliographystyle{plain}
\bibliography{ref}

\end{document}